\numberwithin{equation}{section}
\newcounter{corr}
\definecolor{violet}{rgb}{0.580,0.,0.827}
\newcommand{\corr}[4][]{\typeout{Warning : a correction remains in page \thepage}
 \stepcounter{corr}
	      {\color{blue}\ifmmode\text{\,\sout{\ensuremath{#2}}\,}\else\sout{#2}\fi}
        {\color{green!50!black}#3}
        {\color{violet}#4}
}
\newcommand{\email}[1]{\href{mailto:#1}{#1}}
\newtheorem{theorem}{Theorem}
\newtheorem{lemma}[theorem]{Lemma}
\newtheorem{assumption}{Assumption}
\newtheorem{remark}{Remark}
\newcommand{\bmrm}[1]{{\bm{{\rm #1}}}}
\newcommand{\mat}[1]{\bmrm{#1}}
\newcommand{\bmn}{{\bm{n}}}
\newcommand{\bmr}{{\bm{r}}}
\newcommand{\bmx}{{\bm{x}}}
\newcommand{\bmE}{\bm{E}}
\newcommand{\bmF}{\bm{F}}
\newcommand{\bmH}{\bm{H}}
\newcommand{\bmJ}{\bm{J}}
\newcommand{\bmK}{\bm{K}}
\newcommand{\bmgamma}{{\bm{\gamma}}}
\newcommand{\bmnu}{{\bm{\nu}}}
\newcommand{\calE}{\mathcal{E}}
\newcommand{\calF}{\mathcal{F}}
\newcommand{\calH}{\mathcal{H}}
\newcommand{\calM}{\mathcal{M}}
\newcommand{\calP}{\mathcal{P}}
\newcommand{\calT}{\mathcal{T}}
\newcommand{\bbN}{\mathbb{N}}
\newcommand{\bbP}{\mathbb{P}}
\newcommand{\bbR}{\mathbb{R}}
\newcommand{\rma}{{\rm{a}}}
\newcommand{\rmb}{{\rm{b}}}
\newcommand{\rmd}{{\rm{d}}}
\newcommand{\rmi}{{\rm{i}}}
\newcommand{\rmp}{{\rm{p}}}
\newcommand{\rms}{{\rm{s}}}
\newcommand{\dEhat}{\,{\rmd} \hat{E}}
\newcommand{\dE}{\,{\rmd} E}
\newcommand{\dF}{\,{\rmd} F}
\newcommand{\dS}{\,{\rmd} S}
\newcommand{\ds}{\,{\rmd} s}
\newcommand{\dt}{\,{\rmd} t}
\newcommand{\dx}{\,{\rmd} \bmx}
\newcommand{\dxhat}{\,{\rmd} \hat{\bmx}}
\newcommand{\eq}{ ={}& }
\newcommand{\lea}{ \le{}& }
\newcommand{\les}{ \lesssim{}& }
\newcommand{\plus}{ &{}+ }
\newcommand{\nn}{\nonumber}
\newcommand{\nl}{\nn\\}
\newcommand{\defeq}{\vcentcolon=}
\newcommand{\ul}[1]{\underline{#1}}
\newcommand{\ol}[1]{\overline{#1}}
\newcommand{\bdry}{\partial}
\newcommand{\subT}{{\text{$T$}}}
\newcommand{\subF}{{\text{$F$}}}
\newcommand{\subFT}{{\text{$\Fh[T]$}}}
\newcommand{\subh}{{\text{$h$}}}
\newcommand{\Mh}[1][h]{\calM_{#1}}
\newcommand{\Th}[1][h]{\calT_{#1}}
\newcommand{\Fh}[1][h]{\calF_{#1}}
\newcommand{\Fhb}{\Fh^{\rmb}}
\newcommand{\Fhi}{\Fh^{\rmi}}
\newcommand{\T}{{T}}
\newcommand{\F}{{F}}
\newcommand{\FT}{{\calF_\T}}
\newcommand{\hT}{h_\T}
\newcommand{\hX}{h_X}
\newcommand{\bdryT}{{\bdry \T}}
\newcommand{\matK}{\mat{K}}
\newcommand{\matKT}{\matK_\T}
\newcommand{\olK}{\ol{K}}
\newcommand{\ulK}{\ul{K}}
\newcommand{\olKT}{\olK_\T}
\newcommand{\ulKT}{\ulK_\T}
\newcommand{\alphaT}{\alpha_\T}
\newcommand{\nor}{\bmn}
\newcommand{\norT}{\nor_{\T}}
\newcommand{\norF}{\nor_{\F}}
\newcommand{\norTF}{\nor_{\T\F}}
\def\R{\bbR}
\newcommand{\POLY}[1]{\bbP^{#1}}
\newcommand{\HS}[1]{H^{#1}}
\newcommand{\HONE}{\HS{1}}
\newcommand{\HONEzr}{\HONE_0}
\newcommand{\LP}[1]{L^{#1}}
\newcommand{\LTWO}{\LP{2}}
\DeclareMathOperator*{\DIV}{div}
\newcommand{\norm}[2][]{\|#2\|_{#1}}
\newcommand{\seminorm}[2][]{|#2|_{#1}}
\newcommand{\brac}[2][]{(#2)_{#1}}
\newcommand{\SqBrac}[2][]{\Big[#2\Big]_{#1}}
\newcommand{\Brac}[2][]{\Big(#2\Big)_{#1}}
\newcommand{\SEMINORM}[2][]{\left|#2\right|_{#1}}
\newcommand{\BRAC}[2][]{\left(#2\right)_{#1}}
\newcommand{\piT}[1]{\pi_\subT^{#1}}
\newcommand{\piTzr}[1]{\piT{0, #1}}
\newcommand{\piFT}[1]{\pi_\subFT^{#1}}
\newcommand{\piFTzr}[1]{\piFT{0, #1}}
\newcommand{\piF}[1]{\pi_\subF^{#1}}
\newcommand{\piFzr}[1]{\piF{0, #1}}
\newcommand{\piKT}[1]{\pi_{\matK, T}^{#1}}
\newcommand{\piKTe}[1]{\piKT{1, #1}}
\newcommand{\piKh}[1]{\pi_{\matK, h}^{#1}}
\newcommand{\piKhe}[1]{\piKh{1, #1}}
\newcommand{\pKT}[1]{\rmp_{\matK,\subT}^{#1}}
\newcommand{\pKh}[1]{\rmp_{\matK,\subh}^{#1}}
\newcommand{\U}[2]{\ul{U}_{#1}^{#2}}
\newcommand{\UT}[1]{\U{\subT}{#1}}
\newcommand{\UTk}{\UT{k}}
\newcommand{\Uh}[1]{\U{h}{#1}}
\newcommand{\Uhk}{\Uh{k}}
\newcommand{\Uhkzr}{\U{h, 0}{k}}
\newcommand{\I}[2]{\ul{I}_{#1}^{#2}}
\newcommand{\IT}[1]{\I{\subT}{#1}}
\newcommand{\ITk}{\IT{k}}
\newcommand{\Ih}[1]{\I{h}{#1}}
\newcommand{\Ihk}{\Ih{k}}
\def\a{\rma}
\newcommand{\aKT}{\a_{\matK,T}}
\newcommand{\sKT}{\rms_{\matK,T}}
\newcommand{\aKh}{\a_{\matK,h}}
\newcommand{\sKh}{\rms_{\matK,h}}
\newcommand{\vT}{v_T}
\newcommand{\vh}{v_h}
\newcommand{\vF}{v_F}
\newcommand{\vFT}{v_{\FT}}
\newcommand{\wT}{w_T}
\newcommand{\wFT}{w_{\FT}}
\newcommand{\ulvT}{\ul{v}_T}
\newcommand{\uluT}{\ul{u}_T}
\newcommand{\ulwT}{\ul{w}_T}
\newcommand{\ulvh}{\ul{v}_h}
\newcommand{\uluh}{\ul{u}_h}
\newcommand{\locerrorRHS}[1]{\Big[\alphaT\hT^{k+1}\seminorm[\matK,\HS{k+2}(T)]{#1}\Big]^2}
\newcommand{\errorRHS}[1]{\BRAC{\sum_{T\in\Th}\locerrorRHS{#1}}^\frac12}
\newcommand{\logLogSlopeTriangle}[5]
{
	\pgfplotsextra
	{
		\pgfkeysgetvalue{/pgfplots/xmin}{\xmin}
		\pgfkeysgetvalue{/pgfplots/xmax}{\xmax}
		\pgfkeysgetvalue{/pgfplots/ymin}{\ymin}
		\pgfkeysgetvalue{/pgfplots/ymax}{\ymax}
		
		% Calculate auxilliary quantities, in relative sense.
		\pgfmathsetmacro{\xArel}{#1}
		\pgfmathsetmacro{\yArel}{#3}
		\pgfmathsetmacro{\xBrel}{#1-#2}
		\pgfmathsetmacro{\yBrel}{\yArel}
		\pgfmathsetmacro{\xCrel}{\xArel}
		
		\pgfmathsetmacro{\lnxB}{\xmin*(1-(#1-#2))+\xmax*(#1-#2)} % in [xmin,xmax].
		\pgfmathsetmacro{\lnxA}{\xmin*(1-#1)+\xmax*#1} % in [xmin,xmax].
		\pgfmathsetmacro{\lnyA}{\ymin*(1-#3)+\ymax*#3} % in [ymin,ymax].
		\pgfmathsetmacro{\lnyC}{\lnyA+#4*(\lnxA-\lnxB)}
		\pgfmathsetmacro{\yCrel}{\lnyC-\ymin)/(\ymax-\ymin)}
		
		% Define coordinates for \draw. MIND THE 'rel axis cs' as opposed to the 'axis cs'.
		\coordinate (A) at (rel axis cs:\xArel,\yArel);
		\coordinate (B) at (rel axis cs:\xBrel,\yBrel);
		\coordinate (C) at (rel axis cs:\xCrel,\yCrel);
		
		% Draw slope triangle.
		\draw[#5]   (A)-- node[pos=0.5,anchor=north] {\scriptsize{1}}
		(B)-- 
		(C)-- node[pos=0.,anchor=west] {\scriptsize{#4}} %% node[pos=0.5,anchor=west] {#4}
		cycle;
	}
}
\begin{document}

\title{A new approach to handle curved meshes in the hybrid high-order method}
\author{Liam Yemm}
\affil{School of Mathematics, Monash University, Melbourne, Australia, \email{liam.yemm@monash.edu}}
\maketitle

\begin{abstract}
  We present here a novel approach to handling curved meshes in polytopal methods within the framework of hybrid high-order methods. The hybrid high-order method is a modern numerical scheme for the approximation of elliptic PDEs. An extension to curved meshes allows for the strong enforcement of boundary conditions on curved domains, and for the capture of curved geometries that appear internally in the domain e.g. discontinuities in a diffusion coefficient. The method makes use of non-polynomial functions on the curved faces and does not require any mappings between reference elements/faces. Such an approach does not require the faces to be polynomial, and has a strict upper bound on the number of degrees of freedom on a curved face for a given polynomial degree. Moreover, this approach of enriching the space of unknowns on the curved faces with non-polynomial functions should extend naturally to other polytopal methods. We show the method to be stable and consistent on curved meshes and derive optimal error estimates in $L^2$ and energy norms. We present numerical examples of the method on a domain with curved boundary, and for a diffusion problem such that the diffusion tensor is discontinuous along a curved arc.  
  \medskip\\
  \textbf{Key words:} hybrid high-order methods, curved meshes, error estimates, numerical tests, polytopal methods. 
  \medskip\\
  \textbf{MSC2010:} 65N12, 65N15, 65N30.
\end{abstract}

\section{Introduction}

In recent years, there has been a trend in the computational literature towards arbitrary order polytopal methods for the approximation of partial differential equations. Such methods have a greater flexibility in the mesh requirements and can capture more intricate geometric and physical details in the domain. Being of arbitrary order, they also benefit from better convergence rates with respect to the global degrees of freedom. A short list of such methods includes discontinuous Galerkin and hybridizable discontinuous Galerkin methods \cite{di-pietro.ern:2011:mathematical,cangiani.dong.ea:2017:discontinuous,cockburn.dong.ea:2009:hybridizable}, virtual element methods \cite{da-veiga.brezzi.ea:2013:basic,ahmad.alsaedi.ea:2013:equivalent,brezzi.falk.ea:2014:basic,cangiani.manzini.ea:2017:conforming}, weak Galerkin methods \cite{mu.wang.ea:2015:weak}, and polytopal finite elements \cite{sukumar.tabarraei:2004:conforming}. However, it is well known that any approximation method on a polytopal mesh of a smooth domain (i.e. with a first order representation of the boundary) will yield at best an order two convergence rate \cite{thomee:1971:polygonal,strang.berger:1973:change}. Thus, any high-order method on curved domains requires a high-order (or exact) representation of the boundary for optimal convergence. 

Developed in \cite{di-pietro.ern.ea:2014:arbitrary, di-pietro.ern:2015:hybrid}, hybrid high-order (HHO) schemes are modern polytopal methods for the approximation of elliptic PDEs. A key aspect of HHO is its applicability to generic meshes with arbitrarily shaped polytopal elements. This article focuses on the extension of HHO methods to allow for curved meshes, with unknowns that capture the geometry exactly, yet still achieve optimal convergence. While the approach is presented within an HHO framework for a diffusion problem, the key ideas are more general and can be extended to related polytopal methods and to other models such as linear elasticity, or the Stokes and Navier--Stokes equations.

There has been much work on the development of discontinuous Galerkin (DG) methods on curved meshes \cite {cangiani.dong.ea:2022:hp,cangiani.georgoulis.ea:2018:adaptive,hindenlang.bolemann.ea:2015:mesh}. We also make note of the article \cite{sevilla.fernandez.ea:2011:comparison} which analyses several approaches to high-order finite element methods on curved meshes. However, for the aforementioned methods the problem is much simpler than for hybrid high-order methods due to the lack of unknowns on the mesh faces. The addition of unknowns on the mesh faces is one of the key benefits hybrid methods have over DG and other non-hybrid methods due to the strong enforcement of boundary conditions and the reduction of the global degrees of freedom via static condensation \cite[Appendix B.3.2]{di-pietro.droniou:2020:hybrid}.

The article \cite{da-veiga.russo.ea:2019:virtual} proposes a virtual element method (VEM) in two dimensions for meshes possessing curved edges. For each curved edge the authors consider the space of polynomials on a linear reference segment in $\bbR$ and map this space onto the curved edge via a sufficiently smooth parameterisation. A similar approach is taken in the articles \cite{artioli.da-veiga.ea:2020:adaptive, dassi.fumagalli.ea:2021:mixed}. A typical approach for hybridizable discontinuous Galerkin (HDG) methods on curved domains is to map the boundary data onto a polytopal sub-domain  \cite[cf.][]{cockburn.solano:2012:solving,cockburn.qiu.ea:2014:apriori}. We make note of the articles \cite{solano.vargas:2022:unfitted, solano.terrana.ea:2022:hdg} which also use this approach to curved boundaries.

While there has been some work on the development of hybrid high-order methods on curved meshes \cite[][]{botti.di-pietro:2018:assessment, burman.ern:2018:unfitted, burman.cicuttin.ea:2021:unfitted}, the approach we take in this paper is quite different. Indeed, the article \cite{botti.di-pietro:2018:assessment} approaches the issue of defining unknowns on curved faces by considering a polynomial mapping from a planar reference face onto the curved face. While this naturally requires the mesh faces to be polynomial, it also reduces the approximation order \cite{botti:2012:influence}. Indeed, if the mapping onto the face has effective mapping order $m$ (see \cite[Equation (5) \& Remark 1]{botti.di-pietro:2018:assessment}), then defining face unknowns of degree $l$ in the reference frame will yield approximation properties of at best order $\lfloor \frac{l}{m} \rfloor$ \cite[Equation (8)]{botti.di-pietro:2018:assessment}.  To recover the optimal approximation order observed for straight meshes, the degree of the face polynomials in the reference frame is increased by a factor of $m$, yielding a very large global stencil for high-order mappings. Moreover, approximation properties in the curved faces are unknown, and the authors assume them to be true \cite[cf.][Equation (9)]{botti.di-pietro:2018:assessment} in order to obtain optimal error estimates. We also make note of the conference proceedings \cite{de-souza.loula:primal:2020} which follows the same approach using reference frame polynomials to define unknowns on curved faces for a HDG method.

An alternative approach, first considered in \cite{burman.ern:2018:unfitted, burman.cicuttin.ea:2021:unfitted}, is to increase the polynomial degree of the element unknowns and weakly enforce the boundary or interface conditions without defining any unknowns on curved faces. This procedure has also been implemented for a fourth order bi-harmonic problem in a curved domain \cite{dong.ern:2021:hybrid}. Such an approach ensures stability of the system and that optimal convergence rates are achieved. However, this method does not capture the geometry exactly and requires a finely tuned Nitsche parameter to achieve stability and consistency \cite[cf.][]{prenter.lehrenfeld.ea:2018:note}. Moreover, without unknowns defined on curved faces, it is not clear how to design an enriched method such as that proposed in \cite{yemm:2022:design}, whereas the method devised in this paper works seamlessly with enrichment.

In this paper we take inspiration from the article \cite{yemm:2022:design} and consider unknowns on the faces to include the Neumann traces of higher order polynomials. We note that this approach does not consider reference elements or faces but rather directly defines \emph{non}-polynomial spaces on curved faces. Such an approach is therefore more closely analogous to an enriched or extended method than it is to any of the previously mentioned methods of defining unknowns on curved faces. Using this approach, we are not restricted to consider polynomial faces, but can rather take any $C^1$ manifold. Moreover, the number of degrees of freedom on curved faces is strictly bounded above and does not grow arbitrarily large for high-order mappings. We are able to prove consistency of the scheme, and, by including the space of constant functions on the faces the method is shown to be stable. In Section \ref{sec:error} we prove optimal error estimates in energy and $L^2$-norm, and in Section \ref{sec:integration} we present a method for the design of quadrature rules on curved elements. The paper is concluded with some numerical tests in two dimensions in Section \ref{sec:implementation}.

\subsection{Model and Assumptions on the Mesh}\label{sec:model.and.mesh}

We take a domain \(\Omega\subset \R^d\), \(d\ge 2\), and consider the Dirichlet--diffusion problem: find \(u\in \HONEzr( \Omega) \) such that
\begin{equation}\label{eq:weak.form}
	\a(u, v)  = \ell(v), \qquad\forall v\in \HONEzr(\Omega), 
\end{equation}
where \(\a(u, v) \defeq \brac[\Omega]{\matK\nabla u, \nabla v}\) and \(\ell(v) \defeq \brac[\Omega]{f, v}\) for some source term \(f\in \LTWO(\Omega)\) and diffusion tensor \(\matK\) assumed to be a symmetric, piecewise constant matrix-valued function satisfying, for all \(\bmx\in\R^d\),
\begin{equation}\label{eq:uniform.elliptic}
	\ulK\bmx\cdot\bmx \le (\matK\bmx)\cdot\bmx \le \olK\bmx\cdot\bmx 
\end{equation}
for two fixed real numbers \(0 < \ulK \le \olK\). Here and in the following, \(\brac[X]{\cdot, \cdot}\) is the \(\LTWO\)-inner product of scalar- or vector-valued functions on a set \(X\) for its natural measure. We shall also denote by $\norm[X]{\cdot}$ the $L^2$-norm.
%Note that we also use \(\brac[X]{v, w}\) to denote the integral of the product \(vw\) whenever this product is integrable over \(X\) (which does not necessarily requires \(v,w\in \LTWO(X)\); \(\brac[X]{v, w}\) also makes sense if \(v\in L^1(X)\) and \(w\in L^\infty(X)\) for example).

Let \(\calH\subset(0, \infty)\) be a countable set of mesh sizes with a unique cluster point at \(0\). For each \(h\in\calH\), we partition the domain \(\Omega\) into a mesh \(\Mh=(\Th, \Fh)\), where $\Th$ denotes the mesh elements and $\Fh$ the mesh faces.

We suppose that the mesh elements $\Th$ are a disjoint set of bounded simply connected domains in $\bbR^d$ with piece-wise $C^1$ boundary $\partial T$. We further suppose that $\ol{\Omega} = \bigcup_{T\in\Th} \ol{T}$.

We suppose that the mesh faces $\Fh$ are a disjoint set of non-intersecting, finite, $(d-1)$-dimensional $C^1$ manifolds which partition the mesh skeleton: $\bigcup_{T\in\Th} \partial T = \bigcup_{F\in\Fh} \ol{F}$, and that for each $F\in\Fh$ there either exists two distinct elements $T_1,T_2\in\Th$ such that $F \subset \partial T_1 \cap \partial T_2$ and $F$ is called an internal face, or there exists one element $T\in\Th$ such that $F \subset \partial T \cap \partial \Omega$ and $F$ is called a boundary face. Interior faces are collected in the set $\Fh^i$ and boundary faces in the set $\Fh^b$.

The parameter \(h\) is given by \(h\defeq\max_{T\in\Th}\hT\) where, for \(X=T\in\Th\) or \(X=F\in\Fh\), \(\hX\) denotes the diameter of \(X\). We shall also collect the faces attached to an element \(T\in\Th\) in the set \(\FT:=\{F\in\Fh:F\subset T\}\). The unit normal to \(F\in\FT\) pointing outside \(T\) is denoted by \(\norTF\), and \(\norT:\bdryT\to\R^d\) is the unit normal defined by \((\norT)|_F=\norTF\) for all \(F\in\FT\). We note that as each $F$ is $C^1$, the normal $\norTF$ is well defined. It is also worth noting that the normal vector $\norTF$ will not be constant on curved faces.

We consider the following regularity assumption on the mesh elements.

\begin{assumption}[Regular mesh sequence]\label{assum:star.shaped}	
	There exists a constant \(\varrho>0\) such that, for each \(h\in\calH\), every \(T\in\Th\) is connected by star-shaped sets with parameter \(\varrho\) (see \cite[Definition 1.41]{di-pietro.droniou:2020:hybrid}). 
\end{assumption}

\begin{remark}[Assumptions on the mesh]
	Assumption \ref{assum:star.shaped} is taken from \cite[Assumption 1]{droniou.yemm:2022:robust} however we have removed the assumption that the faces are connected by star shaped sets. We shall also note that there is no requirement that the mesh elements be polytopal or for the mesh faces to be planar. 
\end{remark}

We further require that the elements of the mesh align with the discontinuities of the diffusion tensor, i.e., for each \(T\in\Th\), \(\matK|_T\defeq \matKT\) is a constant matrix. In an analogous manner to \eqref{eq:uniform.elliptic} we define quantities \(0 < \ulKT \le \olKT\) to satisfy
\begin{equation}\label{eq:local.uniform.elliptic}
\ulKT\bmx\cdot\bmx \le (\matKT\bmx )\cdot\bmx \le \olKT\bmx\cdot\bmx \qquad\forall\bmx\in\R^d,
\end{equation}
and we define the local diffusion anisotropy ratio \(\alphaT \defeq \frac{\olKT}{\ulKT}\).

From hereon we shall write $f \lesssim g$ if there exists some constant $C$ which is independent of the quantities $f$ and $g$, the mesh diameter $h$, and of the diffusion tensor $\matK$, such that $f \le C g$.

Under Assumption \ref{assum:star.shaped} the following continuous trace inequality holds: for all $v\in \HONE(T)$,
\begin{equation}\label{eq:continuous.trace}
	\hT\norm[\bdryT]{v}^2 \lesssim \norm[T]{v}^2+\hT^2\norm[T]{\nabla v}^2. 
\end{equation}
A proof of \eqref{eq:continuous.trace} is provided in \cite{droniou.yemm:2022:robust}. We note that no assumption on $T$ being polytopal is required. We also note the following inverse Sobolev inequality, a proof of which is provided for highly generic and potentially curved elements in \cite[Lemma 4.23]{cangiani.dong.ea:2022:hp}:
\begin{equation}\label{eq:inverse.sobolev}
	\norm[T]{\nabla v}^2 \lesssim \hT^{-2}\norm[T]{v}^2 \qquad \forall v\in\POLY{\ell}(T),
\end{equation}
where we denote by $\POLY{\ell}(T)$ the space of polynomials on $T$ of degree $\le \ell$, $\ell\in\bbN$. Combining \eqref{eq:continuous.trace} and \eqref{eq:inverse.sobolev} yields the following discrete trace inequality:
\begin{equation}\label{eq:discrete.trace}
	\hT\norm[\bdryT]{v}^2 \lesssim \norm[T]{v}^2 \qquad \forall v\in\POLY{\ell}(T).
\end{equation}

%------------------------------------------------------------------------------%
% Discrete Problem
%------------------------------------------------------------------------------%

\section{Discrete Model}\label{sec:discrete.problem}

A standard hybrid high-order method on polytopal meshes defines the local discrete space as
\[
	\UTk = \POLY{k}(T) \bigtimes_{F\in\Fh[T]} \POLY{k}(F).
\]
This makes sense on polytopal meshes where $F$ is a $(d-1)$-dimensional hyperplane as there is no ambiguity in what is meant by $\POLY{k}(F)$. Indeed, on such meshes it holds that $\POLY{k}(F) = \POLY{k}(\Omega)|_F = \POLY{k}(\Omega)^d \cdot \norF$. On curved meshes, it is not so obvious what the discrete space should be.

We find that the appropriate local discrete space is that of
\begin{equation}\label{eq:local.discrete.space.def}
	\UTk = \POLY{k}(T) \bigtimes_{F\in\Fh[T]} \calP^k(F),
\end{equation}
where we define
\begin{equation}\label{eq:curved.face.space}
	\calP^k(F) \defeq \POLY{0}(F) + \POLY{k}(\Omega)^d \cdot \norF,
\end{equation}
and $\norF$ is an arbitrary unit normal to the face $F$. The choice of unit normal $\norF$ does not affect the definition of $\calP^k(F)$.
We note that, even for a curved face, there is no ambiguity in the term $\POLY{0}(F)$ as it represents the space of functions which are constant on the face $F$. We emphasise that as the unit normal $\norF$ is not constant, the space $\calP^k(F)$ will be non-polynomial on curved faces.

\begin{remark}
	If $F$ is planar (that is, a $(d-1)$-dimensional hyperplane) then it holds that $\calP^k(F) = \POLY{k}(F)$ and thus the discrete space in \eqref{eq:local.discrete.space.def} coincides with the usual HHO space.
\end{remark}

\begin{remark}
	It suffices to take the space of unknowns on the faces as $\POLY{0}(F) + (\matK_{T_1}\nabla\POLY{k+1}(\Omega)) \cdot \nor_{F} + (\matK_{T_2}\nabla\POLY{k+1}(\Omega)) \cdot \nor_{F}$ with $\{T_1,T_2\}=\Th[F]$ for stability and consistency to hold. However, we define the space as $\calP^k(F) = \POLY{0}(F) + \POLY{k}(\Omega)^d \cdot \norF$ for simpler implementation and robustness of more general models.
\end{remark}

We shall denote by $\calP^{k}(\Fh[T])$ the space
\begin{equation*}
	\calP^{k}(\Fh[T]) \defeq \{v\in L^1(\partial T) : v|_F \in \calP^{k}(F)\ \forall F\in\Fh[T]\},
\end{equation*}
and for a given $\ulvT\in\UTk$ we shall write $\ulvT=(\vT,\vFT)$ with $\vT\in\POLY{k}(T)$ and $\vFT\in\calP^k(\Fh[T])$. The potential reconstruction $\pKT{k+1}:\UTk\to\POLY{k+1}(T)$ is defined as the unique solution to
\begin{subequations}\label{eq:pT}
	\begin{align}
		\brac[T]{\nabla \pKT{k+1} \ulvT, \nabla w} \eq -\brac[T]{\vT, \nabla\cdot(\matKT\nabla w)} + \brac[\partial T]{\vFT, (\matKT\nabla w) \cdot \norT} \quad \forall w \in \POLY{k+1}(T), \label{eq:pT.def}\\
		\int_T(\pKT{k+1} \ulvT - \vT) \eq 0 \label{eq:pT.closure}.
	\end{align}
\end{subequations} 
We denote by $\piTzr{k}:L^1(T) \to \POLY{k}(T)$ and $\piFzr{k}:L^1(F) \to \calP^{k}(F)$ the $L^2$-orthogonal projectors onto the spaces $\POLY{k}(T)$ and $\calP^{k}(F)$ respectively. We denote by $\piKTe{k + 1}:L^1(T) \to \POLY{k+1}(T)$ the oblique elliptic projector onto the space $\POLY{k+1}(T)$ satisfying
\begin{subequations}\label{eq:piTe}
	\begin{align}
	\brac[T]{\matKT\nabla (v - \piKTe{k+1}v), \nabla w} \eq 0\quad \forall w \in \POLY{k+1}(T), \label{eq:piTe.def}\\
	\int_T(\piKTe{k+1} v - v) \eq 0 \label{eq:piTe.closure}.
\end{align}
\end{subequations} 

The following weighted inner-products and norms are taken from \cite{droniou.yemm:2022:robust}. The weighted inner-product \(\brac[\matK, \bdryT]{\cdot, \cdot}:\LTWO(\bdryT)\times\LTWO(\bdryT)\to\R\) is defined for all \(v,w\in \LTWO(\bdryT)\) via
\begin{equation}\label{eq:ip.weighted.bdry}
\brac[\matK, \bdryT]{v, w} \defeq \brac[\bdryT]{\matKT^{\frac12}\norT\,v,\matKT^{\frac12}\norT\,w}=\brac[\bdryT]{[\matKT\norT\cdot\norT]v, w}.
\end{equation}
For all \(r\ge 1\) and \(v\in \HS{r}(T)\) the weighted \(\HS{r}\)-seminorm \(\seminorm[\matK, \HS{r}(T)]{{\cdot}}\) is defined as
\begin{equation}\label{eq:norm.weighted.hr}
\seminorm[\matK, \HS{r}(T)]{v} \defeq \seminorm[H^{r-1}(T)^d]{\matKT^{\frac12}\nabla v}. 
\end{equation}
\begin{lemma}[Approximation properties of $\piKTe{k+1}$]
	For all $s= 1, \dots, k+1$ and $v\in\HS{k + 2}(T)$,
	\begin{equation}\label{eq:piTe.approx}
		\seminorm[\matK,\HS{s}(T)]{v - \piKTe{k+1}v} \lesssim \hT^{k + 2 - s}\seminorm[\matK,\HS{k + 2}(T)]{v}.
	\end{equation}
\end{lemma}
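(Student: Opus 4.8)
The plan is to combine two facts. By its defining relation \eqref{eq:piTe.def}, $v-\piKTe{k+1}v$ is orthogonal to $\POLY{k+1}(T)$ for the bilinear form $\brac[T]{\matKT\nabla\cdot,\nabla\cdot}$, whose associated seminorm is $\seminorm[\matK,\HS{1}(T)]{\cdot}$; hence
\[
	\seminorm[\matK,\HS{1}(T)]{v-\piKTe{k+1}v}=\min_{w\in\POLY{k+1}(T)}\seminorm[\matK,\HS{1}(T)]{v-w}.
\]
Second, since $\matKT$ is constant, the averaged Taylor (Bramble--Hilbert) polynomial commutes with the operator $v\mapsto\matKT^{\frac12}\nabla v$: writing $Q_mv\in\POLY{m}(T)$ for the degree-$m$ averaged Taylor polynomial of $v$ on the star-shaped core provided by Assumption~\ref{assum:star.shaped}, and $\bmg\defeq\matKT^{\frac12}\nabla v\in\HS{k+1}(T)^d$, one has $\matKT^{\frac12}\nabla Q_{k+1}v=Q_{k}\bmg$ (applying $Q_k$ componentwise), and more generally $\partial^\alpha\bigl(\matKT^{\frac12}\nabla Q_{k+1}v\bigr)=Q_{k-|\alpha|}(\partial^\alpha\bmg)$ for $|\alpha|\le k$. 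Since $\seminorm[\matK,\HS{s}(T)]{w}=\seminorm[\HS{s-1}(T)^d]{\matKT^{\frac12}\nabla w}$ by \eqref{eq:norm.weighted.hr}, all weighted seminorms of $v$ and of $v-Q_{k+1}v$ are ordinary Sobolev seminorms of $\bmg$ and of $\bmg-Q_k\bmg$; the idea is to carry out all polynomial approximation directly on $\bmg$.

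First I would do $s=1$: the displayed identity bounds $\seminorm[\matK,\HS1(T)]{v-\piKTe{k+1}v}$ by $\seminorm[\matK,\HS1(T)]{v-Q_{k+1}v}=\norm[T]{\bmg-Q_k\bmg}$, and the standard Bramble--Hilbert estimate on domains satisfying Assumption~\ref{assum:star.shaped} (see \cite{di-pietro.droniou:2020:hybrid}) gives $\norm[T]{\bmg-Q_k\bmg}\lesssim\hT^{k+1}\seminorm[\HS{k+1}(T)^d]{\bmg}=\hT^{k+1}\seminorm[\matK,\HS{k+2}(T)]{v}$, the constant depending only on $\varrho$. Using this for $v-Q_{k+1}v$ together with the triangle inequality also yields $\seminorm[\matK,\HS{1}(T)]{Q_{k+1}v-\piKTe{k+1}v}\lesssim\hT^{k+1}\seminorm[\matK,\HS{k+2}(T)]{v}$.

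For general $s\in\{1,\dots,k+1\}$ I would split $v-\piKTe{k+1}v=(v-Q_{k+1}v)+(Q_{k+1}v-\piKTe{k+1}v)$. For the first term, the commutation identity shows $\seminorm[\matK,\HS{s}(T)]{v-Q_{k+1}v}$ is a finite sum of norms $\norm[T]{\partial^\alpha\bmg-Q_{k+1-s}(\partial^\alpha\bmg)}$ over $|\alpha|=s-1$, each $\lesssim\hT^{k+2-s}\seminorm[\HS{k+2-s}(T)^d]{\partial^\alpha\bmg}\le\hT^{k+2-s}\seminorm[\HS{k+1}(T)^d]{\bmg}$ by Bramble--Hilbert. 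For the second term, $\matKT^{\frac12}\nabla(Q_{k+1}v-\piKTe{k+1}v)\in\POLY{k}(T)^d$, so the $\matK$-independent inverse inequality obtained by iterating \eqref{eq:inverse.sobolev} gives $\seminorm[\matK,\HS{s}(T)]{Q_{k+1}v-\piKTe{k+1}v}\lesssim\hT^{-(s-1)}\seminorm[\matK,\HS{1}(T)]{Q_{k+1}v-\piKTe{k+1}v}$, which by the $s=1$ bound is $\lesssim\hT^{k+2-s}\seminorm[\matK,\HS{k+2}(T)]{v}$. Adding the two contributions gives \eqref{eq:piTe.approx}.

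The delicate point — and the reason the weighted seminorms are the natural objects — is keeping every constant independent of $\matK$. Changing variables by $\bmx\mapsto\matKT^{-\frac12}\bmx$, or bounding $\norm[T]{\matKT^{\frac12}\nabla\cdot}\le\olKT^{\frac12}\norm[T]{\nabla\cdot}$, both lose a factor $\alphaT^{\frac12}$ (the first degrades the star-shapedness of $T$ under anisotropic scaling, the second because $\seminorm[\matK,\HS{k+2}(T)]{v}$ only bounds $\ulKT^{\frac12}\seminorm[\HS{k+1}(T)^d]{\nabla v}$ from below). The commutation $\matKT^{\frac12}\nabla\circ Q_{k+1}=Q_k\circ(\matKT^{\frac12}\nabla)$ avoids both pitfalls: it lets the unweighted Bramble--Hilbert theory act on $\bmg$, whose unweighted Sobolev seminorms are exactly the weighted seminorms of $v$. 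One should also check that the ``connected by star-shaped sets'' form of Assumption~\ref{assum:star.shaped} still supports these Bramble--Hilbert estimates, which is part of the polynomial approximation theory for regular mesh sequences in \cite{di-pietro.droniou:2020:hybrid}.
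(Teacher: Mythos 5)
Your argument is correct and essentially reconstructs, in full, the proof that the paper delegates to \cite[Lemma 9]{droniou.yemm:2022:robust}: the minimisation property of $\piKTe{k+1}$ in the $\seminorm[\matK,\HS{1}(T)]{\cdot}$ seminorm, the commutation of the constant-coefficient operator $\matKT^{\frac12}\nabla$ with averaged Taylor polynomials so that all approximation is performed on $\bmg=\matKT^{\frac12}\nabla v$ with constants independent of $\matK$, and inverse inequalities for $s\ge 2$ are exactly the ingredients of that argument. The one step you defer --- that the Bramble--Hilbert estimates survive under the weaker ``connected by star-shaped sets'' hypothesis, which needs a gluing argument rather than a single Taylor polynomial --- is precisely the content of \cite[Theorem 1.50]{di-pietro.droniou:2020:hybrid} that the paper's own one-line proof invokes, so nothing essential is missing.
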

\begin{proof}
	A proof is provided by \cite[Lemma 9]{droniou.yemm:2022:robust}. While that particular proof assumes the elements are polytopal, the proof only relies on \cite[Theorem 1.50]{di-pietro.droniou:2020:hybrid} which is provided for generic elements connected by star-shaped sets.
\end{proof}
The interpolant $\ITk:\HONE(T)\to\UTk$ is defined by 
\begin{equation}\label{eq:interpolant}
	\ITk v = (\piTzr{k} v, \piFTzr{k} v),
\end{equation}
where $\piFTzr{k}:\LP{1}(T)\to\calP^{k}(\Fh[T])$ is defined such that $\piFTzr{k}|_F=\piFzr{k}$ for all $F\in\Fh[T]$.

\begin{lemma}\label{lem:commutation}
	The following commutation property holds:
	\begin{equation}\label{eq:commutation}
		\pKT{k+1} \circ\ITk = \piKTe{k+1} .
	\end{equation} 
\end{lemma}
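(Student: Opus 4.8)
The plan is to mimic the classical HHO commutation argument, the only genuinely new ingredient being the verification that the curved-face space $\calP^k(F)$ is rich enough. I would fix $v\in\HONE(T)$ and note that both $\pKT{k+1}(\ITk v)$ and $\piKTe{k+1}v$ belong to $\POLY{k+1}(T)$, so it suffices to show that they have the same $\matKT$-weighted gradient, i.e. $\brac[T]{\matKT\nabla\pKT{k+1}(\ITk v),\nabla w}=\brac[T]{\matKT\nabla\piKTe{k+1}v,\nabla w}$ for all $w\in\POLY{k+1}(T)$, together with the same mean value over $T$. Granting these, the difference $\delta\in\POLY{k+1}(T)$ satisfies $\brac[T]{\matKT\nabla\delta,\nabla\delta}=0$, so $\nabla\delta=0$ by the lower bound in \eqref{eq:local.uniform.elliptic}, and then $\delta=0$ once the means agree.

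To match the weighted gradients I would substitute $\ITk v=(\piTzr{k}v,\piFTzr{k}v)$ into the right-hand side of \eqref{eq:pT.def} and remove both projectors. For the volume term this rests only on $\nabla\cdot(\matKT\nabla w)\in\POLY{k-1}(T)\subset\POLY{k}(T)$ (valid since $\matKT$ is constant and $w\in\POLY{k+1}(T)$), giving $\brac[T]{\piTzr{k}v,\nabla\cdot(\matKT\nabla w)}=\brac[T]{v,\nabla\cdot(\matKT\nabla w)}$. The boundary term is the delicate point: for $F\in\Fh[T]$, since $w$ is the restriction to $T$ of a polynomial of degree $\le k+1$ on $\R^d$, one has $\matKT\nabla w\in\POLY{k}(\Omega)^d$, hence $(\matKT\nabla w)|_F\cdot\norTF=\pm\,(\matKT\nabla w)|_F\cdot\norF\in\POLY{k}(\Omega)^d\cdot\norF\subset\calP^k(F)$; because $\piFTzr{k}$ is the $L^2(\partial T)$-orthogonal projector onto $\calP^k(\Fh[T])$, this yields $\brac[\partial T]{\piFTzr{k}v,(\matKT\nabla w)\cdot\norT}=\brac[\partial T]{v,(\matKT\nabla w)\cdot\norT}$. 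The right-hand side of \eqref{eq:pT.def} evaluated at $\ulvT=\ITk v$ then reads $-\brac[T]{v,\nabla\cdot(\matKT\nabla w)}+\brac[\partial T]{v,(\matKT\nabla w)\cdot\norT}$, which by Green's formula on $T$ (legitimate since $v\in\HONE(T)$ and $\matKT\nabla w$ is a polynomial vector field) equals $\brac[T]{\matKT\nabla v,\nabla w}$. Comparing with \eqref{eq:piTe.def}, which characterises $\piKTe{k+1}v$ by $\brac[T]{\matKT\nabla\piKTe{k+1}v,\nabla w}=\brac[T]{\matKT\nabla v,\nabla w}$, closes this step.

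Finally, for the means I would use the closure relations: \eqref{eq:pT.closure} gives $\int_T\pKT{k+1}(\ITk v)=\int_T\piTzr{k}v=\int_T v$ (the last equality since $1\in\POLY{k}(T)$), and \eqref{eq:piTe.closure} gives $\int_T\piKTe{k+1}v=\int_T v$, so $\int_T\delta=0$ and hence $\delta=0$, i.e. $\pKT{k+1}(\ITk v)=\piKTe{k+1}v$. I expect the main obstacle to be precisely the inclusion $(\matKT\nabla w)|_F\cdot\norTF\in\calP^k(F)$ on a curved face: there $\norTF$ is not constant, so one cannot argue within $\POLY{k}(F)$ and genuinely needs the Neumann-trace component $\POLY{k}(\Omega)^d\cdot\norF$ that was built into \eqref{eq:curved.face.space}. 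Everything else is a verbatim transcription of the planar HHO commutation proof.
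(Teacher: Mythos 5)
Your proposal is correct and follows essentially the same route as the paper: substitute $\ITk v$ into \eqref{eq:pT.def}, drop the two $L^2$-projectors using $\nabla\cdot(\matKT\nabla w)\in\POLY{k}(T)$ and $(\matKT\nabla w)\cdot\norT\in(\matKT\nabla\POLY{k+1}(T))\cdot\norT\subset\calP^k(\Fh[T])$, integrate by parts, compare with \eqref{eq:piTe.def}, and finish with the closure conditions; the inclusion on curved faces that you flag as the delicate point is precisely the one the paper singles out. Your write-up is in fact slightly more careful about carrying the $\matKT$-weighting consistently through the identification with the oblique elliptic projector.
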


\begin{proof}
	It follows from the definitions of $\pKT{k+1}$ and $\ITk$ that
	\[
		\brac[T]{\nabla \pKT{k+1}\ITk v, \nabla w} = -\brac[T]{\piTzr{k} v, \nabla\cdot(\matKT\nabla w)} + \brac[\partial T]{\piFTzr{k} v, (\matKT\nabla w) \cdot \norT}\qquad \forall w \in \POLY{k+1}(T).
	\]
	However, as $\nabla\cdot(\matKT\nabla w) \in \POLY{k}(T)$ and $(\matKT\nabla w) \cdot \norT \in (\matKT\nabla\POLY{k+1}(T)) \cdot \norT \subset \calP^{k}(\Fh[T])$ each of the projectors $\piTzr{k}$ and $\piFTzr{k}$ can be removed to yield
	\begin{align*}
		\brac[T]{\nabla \pKT{k+1}\ITk v, \nabla w} \eq -\brac[T]{v, \nabla\cdot(\matKT\nabla w)} + \brac[\partial T]{v, (\matKT\nabla w) \cdot \norT} \\ \eq \brac[T]{\nabla v, \nabla w} = \brac[T]{\nabla \piKTe{k+1}v, \nabla w},
	\end{align*}
	where in the last two equalities we have integrated by parts and introduced the oblique elliptic projector using equation \eqref{eq:piTe.def}. Taking $w = \pKT{k+1}\ITk v - \piKTe{k+1}v$ we observe that
	\[
		\norm[T]{\nabla(\pKT{k+1}\ITk v - \piKTe{k+1}v)}^2=0.
	\]
	Combining with $\int_T (\pKT{k+1}\ITk v - \piKTe{k+1}v) = 0$ (due to equations \eqref{eq:pT.closure} and \eqref{eq:piTe.closure}) we conclude that
	\[
		\pKT{k+1}\ITk v - \piKTe{k+1}v = 0.
	\]
\end{proof}

\begin{remark}
	We note that the commutation property \eqref{eq:commutation} is the key result required to prove consistency of the scheme and relies on the fact that $(\matKT\nabla\POLY{k+1}(T)) \cdot \norTF \subset \calP^{k}(F)$ for each $F$. The additional condition that $\POLY{0}(F) \subset \calP^{k}(F)$ is required for coercivity to hold.
\end{remark}
We endow the discrete space $\UTk$ with the seminorm
\begin{equation}\label{eq:discrete.norm.def}
	\norm[1,\matK,T]{\ulvT}^2 = \seminorm[\matK,\HONE(T)]{\vT}^2 + \hT^{-1}\norm[\matK, \partial T]{\vFT - \vT}^2.
%	\norm[T]{\nabla \vT}^2 + \hT^{-1}\norm[\partial T]{\vFT - \vT}^2.
\end{equation}
%\[
%	\norm[1,\matK,T]{\ulvT}^2 = \norm[T]{\nabla \pKT{k+1}\ulvT}^2 + \norm[T]{\nabla (\vT - \pKT{k+1}\ulvT)}^2 + \hT^{-1}\norm[\partial T]{\vFT - \pKT{k+1}\ulvT}^2.
%\]
The local bilinear form \(\aKT:\UTk\times\UTk\to\bbR\) is defined as
\begin{equation}\label{eq:local.form.def}
\aKT(\uluT,\ulvT) \defeq \brac[T]{\matKT\nabla \pKT{k+1}\uluT, \nabla \pKT{k+1}\ulvT} + \sKT(\uluT, \ulvT), 
\end{equation}
where \(\sKT:\UTk\times\UTk\to\R\) is a local stabilisation term such that the following assumptions hold.
\begin{assumption}[Local stabilisation term]\label{assum:stability}
	The stabilisation term \(\sKT\) is a symmetric, positive semi-definite bilinear form that satisfies:
	\begin{enumerate}
		\item \emph{Stability and boundedness.} For all \(\ulvT\in\UTk\),
		\begin{equation}\label{eq:sT.stability.and.boundedness}
			\alphaT^{-1}\norm[1,\matK,T]{\ulvT}^2 \lesssim \aKT(\ulvT,\ulvT) \lesssim \alphaT\norm[1,\matK,T]{\ulvT}^2.
		\end{equation}
		\item \emph{Polynomial consistency}. For all $\ulvT\in\UTk$ and \(w\in\POLY{k+1}(T)\),
		\begin{equation}\label{eq:sT.polynomial.consistency}
			\sKT(\ulvT, \ITk w)  = 0.
		\end{equation}
	\end{enumerate}
\end{assumption}

An example of a stabilisation bilinear form satisfying Assumption \ref{assum:stability} is provided in Section \ref{sec:stabilisation}.

\begin{lemma}[Consistency of $\sKT$]
	Suppose \(\sKT:\UTk\times\UTk\to\R\) satisfies Assumption \ref{assum:stability}. Then it holds for all $w\in\HS{k+2}(T)$ that
	\begin{equation}\label{eq:sT.consistency}
		\sKT(\ITk w, \ITk w) \lesssim \SqBrac{\alphaT\hT^{k+1}\seminorm[\matK,\HS{k+2}(T)]{w}}^2.
	\end{equation}
\end{lemma}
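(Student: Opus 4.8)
The plan is to combine the abstract properties of $\sKT$ in Assumption~\ref{assum:stability} with the approximation estimate \eqref{eq:piTe.approx} for the oblique elliptic projector, so that the claim reduces to a bound on the discrete seminorm \eqref{eq:discrete.norm.def} of a well-chosen interpolated function.

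Set $\eta \defeq w - \piKTe{k+1}w$, so that $\ITk w = \ITk\eta + \ITk(\piKTe{k+1}w)$ by linearity of $\ITk$. Expanding $\sKT(\ITk w, \ITk w)$ by bilinearity and symmetry, the polynomial consistency \eqref{eq:sT.polynomial.consistency} --- used with $\ulvT=\ITk\eta$ and then with $\ulvT=\ITk(\piKTe{k+1}w)$, which is legitimate because $\piKTe{k+1}w\in\POLY{k+1}(T)$ --- cancels both the mixed term and the purely polynomial term, leaving $\sKT(\ITk w, \ITk w) = \sKT(\ITk\eta, \ITk\eta)$. Since $\aKT(\ulvT,\ulvT) = \norm[T]{\matKT^{\frac12}\nabla\pKT{k+1}\ulvT}^2 + \sKT(\ulvT,\ulvT) \ge \sKT(\ulvT,\ulvT)$, the upper bound in \eqref{eq:sT.stability.and.boundedness} gives
\[
	\sKT(\ITk w, \ITk w) = \sKT(\ITk\eta,\ITk\eta) \le \aKT(\ITk\eta,\ITk\eta) \lesssim \alphaT\norm[1,\matK,T]{\ITk\eta}^2 ,
\]
and by the definition \eqref{eq:interpolant} of the interpolant and \eqref{eq:discrete.norm.def}, the last norm squared equals $\seminorm[\matK,\HONE(T)]{\piTzr{k}\eta}^2 + \hT^{-1}\norm[\matK,\partial T]{\piFTzr{k}\eta - \piTzr{k}\eta}^2$.

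It then remains to show that each of these two contributions is $\lesssim \alphaT\hT^{2(k+1)}\seminorm[\matK,\HS{k+2}(T)]{w}^2$, which together with the leading $\alphaT$ yields \eqref{eq:sT.consistency}. For the volumetric term, I would use $H^1$-stability of the $L^2$-projector, $\norm[T]{\nabla\piTzr{k}\eta}\lesssim\norm[T]{\nabla\eta}$ (a consequence of the inverse inequality \eqref{eq:inverse.sobolev}, the trivial $L^2$-stability of $\piTzr{k}$, and a Poincar\'e inequality on the star-shaped element $T$), together with $\seminorm[\matK,\HONE(T)]{\piTzr{k}\eta}^2 \le \olKT\norm[T]{\nabla\piTzr{k}\eta}^2 \lesssim \alphaT\seminorm[\matK,\HONE(T)]{\eta}^2$, and then \eqref{eq:piTe.approx} with $s=1$. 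For the boundary term, since $\norm[\matK,\partial T]{v}^2 = \brac[\partial T]{[\matKT\norT\cdot\norT]\,v,v}\le\olKT\norm[\partial T]{v}^2$, it suffices to bound $\hT^{-1}\norm[\partial T]{\piFTzr{k}\eta-\piTzr{k}\eta}^2$; I would split $\piFTzr{k}\eta-\piTzr{k}\eta = (\piFTzr{k}\eta-\eta)+(\eta-\piTzr{k}\eta)$ face by face, bound $\norm[F]{\eta-\piFzr{k}\eta}\le\norm[F]{\eta}$ (Pythagoras, $\piFzr{k}$ being an $L^2(F)$-orthogonal projection), and estimate both pieces via the continuous trace inequality \eqref{eq:continuous.trace} combined with the Poincar\'e inequality and $\int_T\eta=0$ from \eqref{eq:piTe.closure} (plus $H^1$-stability of $\piTzr{k}$ for the second piece) to obtain $\norm[\partial T]{\piFTzr{k}\eta-\piTzr{k}\eta}^2 \lesssim \hT\norm[T]{\nabla\eta}^2$; multiplying by $\hT^{-1}\olKT$, bounding by $\alphaT\seminorm[\matK,\HONE(T)]{\eta}^2$, and invoking \eqref{eq:piTe.approx} closes this term. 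Collecting both contributions gives $\sKT(\ITk w, \ITk w)\lesssim\alphaT^2\hT^{2(k+1)}\seminorm[\matK,\HS{k+2}(T)]{w}^2 = \SqBrac{\alphaT\hT^{k+1}\seminorm[\matK,\HS{k+2}(T)]{w}}^2$.

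The delicate point will be the boundary term. In the usual planar analysis one writes $\piFzr{k}\eta-\piTzr{k}\eta = \piFzr{k}(\eta-\piTzr{k}\eta)$, exploiting that $\piTzr{k}\eta|_F$ is a polynomial trace lying in $\calP^k(F)=\POLY{k}(F)$ and is hence fixed by $\piFzr{k}$; this collapses the estimate to a single approximation term for $\eta-\piTzr{k}\eta$. On curved faces that inclusion fails, because $\calP^k(F)$ is genuinely non-polynomial, so the argument must instead use the two-term split above; the saving grace is that this split requires nothing of $\calP^k(F)$ beyond its being a subspace (so that $\piFzr{k}$ is an $L^2(F)$-contraction) and does not even invoke $\POLY{0}(F)\subseteq\calP^k(F)$, and it leans only on the trace and Poincar\'e inequalities, both of which survive curvature under Assumption~\ref{assum:star.shaped}. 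A secondary, purely bookkeeping point is the anisotropy ratio $\alphaT$, which appears because $\piTzr{k}$ and $\piFzr{k}$ are $L^2$- rather than $\matK$-orthogonal and because \eqref{eq:piTe.approx} is supplied only for $s\ge1$.
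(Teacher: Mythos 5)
Your proof is correct and follows the same skeleton as the paper's: reduce to $\sKT(\ITk\eta,\ITk\eta)$ with $\eta = w-\piKTe{k+1}w$ via polynomial consistency \eqref{eq:sT.polynomial.consistency}, pass to $\alphaT\norm[1,\matK,T]{\ITk\eta}^2$ via the upper bound in \eqref{eq:sT.stability.and.boundedness}, and finish with the approximation property \eqref{eq:piTe.approx}. The one place you diverge is the middle estimate: the paper disposes of $\norm[1,\matK,T]{\ITk\eta}^2\lesssim\alphaT\seminorm[\matK,\HONE(T)]{\eta}^2$ by citing its Lemma \ref{lem:h1.bound.l2.projectors}, which is stated for arbitrary $v\in\HONE(T)$ and therefore must compare $\piFTzr{k}v$ against the mean value $\piTzr{0}v$, relying on the inclusion $\POLY{0}(F)\subset\calP^k(F)$; you instead exploit that your particular $\eta$ already has zero mean by \eqref{eq:piTe.closure}, so the Pythagoras contraction $\norm[F]{\eta-\piFzr{k}\eta}\le\norm[F]{\eta}$ plus trace and Poincar\'e--Wirtinger suffices, with no structural hypothesis on $\calP^k(F)$ beyond it being a closed subspace. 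Your version is thus marginally more self-contained and isolates more sharply where $\POLY{0}(F)\subset\calP^k(F)$ is genuinely needed (coercivity, not consistency) --- a point the paper also makes in a remark; the paper's route buys economy, since Lemma \ref{lem:h1.bound.l2.projectors} is needed anyway for the coercivity and boundedness proofs in Section \ref{sec:stabilisation}. Both yield the same $\alphaT^2$ prefactor for the same reasons.
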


\begin{proof}
	It follows from equation \eqref{eq:sT.polynomial.consistency} that
	\begin{equation}
		\sKT(\ITk w, \ITk w) = \sKT(\ITk (w - \piKTe{k+1}w), \ITk (w - \piKTe{k+1}w)).
	\end{equation}
	Therefore, applying the upper bound in \eqref{eq:sT.stability.and.boundedness} and the definition \eqref{eq:discrete.norm.def} of $\norm[1,\matK,T]{\cdot}$ yields
%	and the commutation property \eqref{eq:commutation} that
	\begin{multline*}
		\sKT(\ITk w, \ITk w) \lesssim \\ \alphaT\Brac{\seminorm[\matK,\HONE(T)]{\piTzr{k}(w - \piKTe{k+1}w)}^2 + \hT^{-1} \norm[\matK,\bdryT]{\piFTzr{k}(w - \piKTe{k+1}w) - \piTzr{k}(w - \piKTe{k+1}w)}^2}.
	\end{multline*}
	Thus, we infer from Lemma \ref{lem:h1.bound.l2.projectors} below that
	\begin{equation*}
		\sKT(\ITk w, \ITk w) \lesssim \alphaT^2\seminorm[\matK,\HONE(T)]{w - \piKTe{k+1}w}^2.
	\end{equation*}
	The proof follows from the approximation properties \eqref{eq:piTe.approx} of $\piKTe{k+1}$.
\end{proof}

\subsection{Global Space and HHO Scheme}

The global space of unknowns is defined as 
\begin{equation}\label{eq:global.space.def}
	\Uhk\defeq \bigtimes_{T\in\Th} \POLY{k}(T)\ \times \bigtimes_{F\in\Fh} \calP^k(F).
\end{equation}
To account for the homogeneous boundary conditions, the following subspace is also introduced,
\begin{equation}\label{eq:global.space.hom.def}
	\Uhkzr\defeq\{\ulvh \in\Uhk:v_{F}=0\quad\forall F\in\Fhb\}.
\end{equation}
For any \(\ulvh\in\Uhk\) we denote its restriction to an element \(T\) by \(\ulvT=(\vT,\vFT)\in\UTk\) (where, naturally, \(\vFT\) is defined form \((\vF)_{F\in\FT}\)). We also denote by \(\vh\) the piecewise polynomial function satisfying \(\vh|_T=\vT\) for all \(T\in\Th\).
%The global operators \(\pKh{k+1}:\Uhk\to\POLY{k+1}(\Th)\), \(\piKhe{k+1}:\HONE(\Th)\to\POLY{k+1}(\Th)\), and  \(\pihzr{k}:L^1(\Omega)\to\POLY{k}(\Th)\) are defined such that their actions restricted to an element \(T\in\Th\) are that of \(\pKT{k+1}\), \(\piKTe{k+1}\), and \(\piTzr{k}\) respectively. The global interpolator \(\Ihk:\HONE(\Omega)\to\Uhk\) is defined as \(\Ihk v \defeq ((\piTzr{k}v)_{T\in\Th},(\piFzr{k}v)_{F\in\Fh})\). It follows that \(\pKh{k+1}\circ\Ihk v = \piKhe{k+1}v\) for all \(v\in\HONE(\Omega)\).
The global bilinear forms \(\aKh:\Uhk\times\Uhk\to\bbR\) and \(\sKh:\Uhk\times\Uhk\to\mathbb{R}\) are defined as
\[
\aKh(\uluh, \ulvh) \defeq \sum_{T\in\Th} \aKT(\uluT,\ulvT)
\quad\textrm{and}\quad
\sKh(\uluh, \ulvh) \defeq \sum_{T\in\Th} \sKT(\uluT,\ulvT).
\]
The HHO scheme reads: find \(\uluh\in\Uhkzr\) such that
\begin{equation}\label{eq:discrete.problem}
	\aKh(\uluh, \ulvh) = \ell_h(\ulvh) \qquad\forall \ulvh\in\Uhkzr, 
\end{equation}
where \(\ell_h:\Uhkzr\to\R\) is a linear form defined as
\begin{equation}\label{eq:discrete.src.term}
	\ell_h(\ulvh) \defeq \sum_{T\in\Th}\brac[T]{f,\vT}. 
\end{equation}

We define the discrete energy norm \(\norm[\rma, \matK, h]{{\cdot}}\) on \(\Uhkzr\) as
\begin{equation}\label{eq:energy.norm.def}
\norm[\rma, \matK, h]{\ulvh}\defeq \aKh(\ulvh,\ulvh)^\frac{1}{2} \qquad \forall \ulvh\in\Uhk.
\end{equation}	

\begin{lemma}\label{lem:ah.norm}
The mapping \(\norm[\rma, \matK, h]{{\cdot}}:\Uhkzr\to\R\) defines a norm on $\Uhkzr$.
\end{lemma}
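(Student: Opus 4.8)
The plan is to show that $\norm[\rma,\matK,h]{\cdot}$ satisfies the three norm axioms on $\Uhkzr$: it is nonnegative and homogeneous (which is immediate from bilinearity and positive semi-definiteness of $\aKh$, the latter following from the lower bound in \eqref{eq:sT.stability.and.boundedness} applied elementwise), so the only real content is definiteness: if $\norm[\rma,\matK,h]{\ulvh}=0$ then $\ulvh=\underline{0}$.

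So suppose $\ulvh\in\Uhkzr$ with $\aKh(\ulvh,\ulvh)=0$. By the elementwise lower bound in Assumption \ref{assum:stability}, $\alphaT^{-1}\norm[1,\matK,T]{\ulvT}^2 \lesssim \aKT(\ulvT,\ulvT)$, and summing over $T\in\Th$ forces $\norm[1,\matK,T]{\ulvT}=0$ for every $T\in\Th$. By the definition \eqref{eq:discrete.norm.def} of the local seminorm this gives, for each $T$, both $\seminorm[\matK,\HONE(T)]{\vT}=0$ and $\norm[\matK,\partial T]{\vFT-\vT}=0$. The first, together with uniform ellipticity \eqref{eq:local.uniform.elliptic} of $\matKT$, implies $\nabla\vT=0$ on $T$, so $\vT$ is a constant $c_T$ on each connected element $T$. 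The second, using \eqref{eq:ip.weighted.bdry} and the fact that $[\matKT\norT\cdot\norT]$ is bounded below by $\ulKT>0$, implies $\vFT = \vT = c_T$ on $\partial T$; in particular $\vF = c_T$ (a constant) on every face $F\in\FT$.

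It then remains to propagate these constants across the mesh and kill them using the boundary condition. For an interior face $F$ shared by $T_1,T_2$, the single-valuedness of the face unknown $\vF$ in the global space $\Uhk$ forces $c_{T_1}=c_{T_2}$; since $\Omega$ is connected (and $\ol\Omega=\bigcup_T\ol T$ with the mesh skeleton connecting the elements through shared faces), a standard chaining argument shows all the $c_T$ are equal to a single constant $c$. Finally, $\ulvh\in\Uhkzr$ means $\vF=0$ on every boundary face $F\in\Fhb$; picking one such $F\subset\partial T$ gives $c = c_T = \vF = 0$. Hence $\vT=0$ for all $T$ and $\vFT = 0$ on all faces, i.e. $\ulvh=\underline{0}$.

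The main obstacle — really the only nontrivial point — is justifying the connectivity/chaining step cleanly: one must argue that any two elements of $\Th$ can be joined by a path of elements in which consecutive members share an interior face, which is where the hypotheses that $\Omega$ is a domain (connected) and that the mesh faces partition the skeleton $\bigcup_T\partial T=\bigcup_F\ol F$ are used. Everything else reduces to the elementwise lower stability bound plus the uniform positivity of the weights $\ulKT$ and $[\matKT\norT\cdot\norT]$, which are already available from \eqref{eq:local.uniform.elliptic} and \eqref{eq:sT.stability.and.boundedness}.
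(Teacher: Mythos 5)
Your proof is correct and follows essentially the same route as the paper: use the elementwise coercivity bound to force $\norm[1,\matK,T]{\ulvT}=0$ for every $T$, deduce that element and face values are matching constants, and kill them via the homogeneous boundary condition. You are in fact more explicit than the paper about the connectivity/chaining step needed to propagate the constants across interior faces, which the paper's proof leaves implicit.
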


\begin{proof}
As $\norm[\rma, \matK, h]{{\cdot}}$ is clearly a seminorm we only need to prove that if $\norm[\rma, \matK, h]{{\ulvh}} = 0$ then $\ulvh = 0$. It follows from the boundedness \eqref{eq:sT.boundedness} that
\[
\sum_{T\in\Th} \SqBrac{\seminorm[\matK,\HONE(T)]{\vT}^2 + \hT^{-1}\norm[\matK, \partial T]{\vFT - \vT}^2} \lesssim \norm[\rma, \matK, h]{{\ulvh}}^2.
\]
Thus, if $\norm[\rma, \matK, h]{{\ulvh}} = 0$ then it must hold that $\vT = \vF = {\rm{const}}$ for every $T\in\Th$, $F\in\Th$. However, we infer from the homogeneous boundary conditions that those constants must all be zero.
\end{proof}

\section{Error estimates}\label{sec:error}

\begin{theorem}[Consistency error]\label{thm:consistency}
	The consistency error \(\calE_h( w;\cdot) :\Uhkzr\to\R\) is the linear form defined for all \(\ulvh \in\Uhkzr\) as
	\[
	\calE_h(w; \ulvh) \defeq -\brac[\Omega]{\nabla\cdot(\matKT\nabla w), \vh} - \aKh(\Ihk w, \ulvh),
	\]
	for any \(w\in \HONEzr( \Omega) \) such that \(\nabla\cdot(\matKT\nabla w)\in \LTWO(\Omega)\). If such a \(w\) additionally satisfies \(w|_T\in\HS{k+2}(T)\) for all $T\in\Th$, the consistency error satisfies
	\begin{equation}\label{eq:cons.error}
		|\calE_h(w; \ulvh)| \lesssim
%		 h^{k+1}\seminorm[\matK,\HS{k+2}(\Th)]{w}\norm[\rma, \matK, h]{\ulvh}. 
		\errorRHS{w}\norm[\rma, \matK, h]{\ulvh}.
	\end{equation}
\end{theorem}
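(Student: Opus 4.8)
The plan is to turn $\calE_h(w;\ulvh)$ into a sum over $T\in\Th$ of local quantities controlled by the oblique elliptic projection error $\eta_T := w - \piKTe{k+1}w$ and by the stabilisation, and then to conclude with the approximation estimates \eqref{eq:piTe.approx}, the stabilisation consistency \eqref{eq:sT.consistency}, and a Cauchy--Schwarz sum using the stability bound \eqref{eq:sT.stability.and.boundedness}. First I would integrate by parts elementwise. Since $\matK$ is piecewise constant and mesh-aligned and $\nabla\cdot(\matKT\nabla w)\in\LTWO(\Omega)$, the field $\matK\nabla w$ belongs to $H(\DIV;\Omega)$ and so has a single-valued normal trace across each (curved) internal face; as $\vFT$ is single-valued on internal faces and vanishes on $\Fhb$, it follows that $\sum_{T\in\Th}\brac[\bdryT]{(\matKT\nabla w)\cdot\norT,\vFT}=0$. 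Using Green's formula on each $T$ (valid since $w|_T\in\HS{k+2}(T)$) and adding this vanishing sum yields
\[
-\brac[\Omega]{\nabla\cdot(\matKT\nabla w),\vh} = \sum_{T\in\Th}\Big(\brac[T]{\matKT\nabla w,\nabla\vT} - \brac[\bdryT]{(\matKT\nabla w)\cdot\norT,\vT-\vFT}\Big).
\]

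Next I would rewrite the discrete term. By the commutation property \eqref{eq:commutation}, $\pKT{k+1}\ITk w=\piKTe{k+1}w$, and since $\pKT{k+1}\ulvT\in\POLY{k+1}(T)$, the defining relation \eqref{eq:piTe.def} of the oblique elliptic projector gives $\aKT(\ITk w,\ulvT)=\brac[T]{\matKT\nabla w,\nabla\pKT{k+1}\ulvT}+\sKT(\ITk w,\ulvT)$. Subtracting this from the identity above and writing $w=\piKTe{k+1}w+\eta_T$ in the volume term, the part $\brac[T]{\matKT\nabla\eta_T,\nabla\pKT{k+1}\ulvT}$ vanishes again by \eqref{eq:piTe.def}, while $\brac[T]{\matKT\nabla\piKTe{k+1}w,\nabla(\vT-\pKT{k+1}\ulvT)}$ becomes a boundary term by the defining relation \eqref{eq:pT.def} of $\pKT{k+1}$, which after one integration by parts reads $\brac[T]{\matKT\nabla(\vT-\pKT{k+1}\ulvT),\nabla z}=\brac[\bdryT]{(\matKT\nabla z)\cdot\norT,\vT-\vFT}$ for all $z\in\POLY{k+1}(T)$. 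The boundary contributions of $\piKTe{k+1}w$ then cancel against part of the flux term, leaving $\calE_h(w;\ulvh)=\sum_{T\in\Th}\calE_T$ with $\calE_T=\brac[T]{\matKT\nabla\eta_T,\nabla\vT}-\brac[\bdryT]{(\matKT\nabla\eta_T)\cdot\norT,\vT-\vFT}-\sKT(\ITk w,\ulvT)$.

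It remains to bound $\calE_T$. The volume term is at most $\seminorm[\matK,\HONE(T)]{\eta_T}\seminorm[\matK,\HONE(T)]{\vT}$; the boundary term, by a weighted Cauchy--Schwarz inequality built on \eqref{eq:ip.weighted.bdry}, is at most $\norm[\bdryT]{\matKT^{\frac12}\nabla\eta_T}\norm[\matK,\bdryT]{\vT-\vFT}$, to which one applies the continuous trace inequality \eqref{eq:continuous.trace} componentwise to $\matKT^{\frac12}\nabla\eta_T\in\HONE(T)^d$ to obtain $\hT\norm[\bdryT]{\matKT^{\frac12}\nabla\eta_T}^2\lesssim\seminorm[\matK,\HONE(T)]{\eta_T}^2+\hT^2\seminorm[\matK,\HS{2}(T)]{\eta_T}^2$; and the stabilisation term, by Cauchy--Schwarz for the positive semi-definite form $\sKT$ together with \eqref{eq:sT.consistency}. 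Since $\seminorm[\matK,\HONE(T)]{\vT}$ and $\hT^{-\frac12}\norm[\matK,\bdryT]{\vT-\vFT}$ are both $\le\norm[1,\matK,T]{\ulvT}$ by \eqref{eq:discrete.norm.def}, and since \eqref{eq:piTe.approx} (with $s=1,2$; when $k=0$ the second-order seminorm of the affine function $\piKTe{1}w$ vanishes) gives $\seminorm[\matK,\HONE(T)]{\eta_T}\lesssim\hT^{k+1}\seminorm[\matK,\HS{k+2}(T)]{w}$ and $\hT\seminorm[\matK,\HS{2}(T)]{\eta_T}\lesssim\hT^{k+1}\seminorm[\matK,\HS{k+2}(T)]{w}$, the first two terms of $\calE_T$ are $\lesssim\hT^{k+1}\seminorm[\matK,\HS{k+2}(T)]{w}\norm[1,\matK,T]{\ulvT}$ and the last is $\lesssim\alphaT\hT^{k+1}\seminorm[\matK,\HS{k+2}(T)]{w}\,\sKT(\ulvT,\ulvT)^{\frac12}$. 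Finally, \eqref{eq:sT.stability.and.boundedness} yields $\norm[1,\matK,T]{\ulvT}\lesssim\alphaT^{\frac12}\aKT(\ulvT,\ulvT)^{\frac12}$ and $\sKT(\ulvT,\ulvT)\le\aKT(\ulvT,\ulvT)$, so (using $\alphaT\ge 1$) $|\calE_T|\lesssim\alphaT\hT^{k+1}\seminorm[\matK,\HS{k+2}(T)]{w}\,\aKT(\ulvT,\ulvT)^{\frac12}$; summing over $T$, a discrete Cauchy--Schwarz inequality and $\norm[\rma,\matK,h]{\ulvh}^2=\sum_{T\in\Th}\aKT(\ulvT,\ulvT)$ give \eqref{eq:cons.error}.

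Relative to the classical polytopal analysis, the only delicate points are the first integration by parts and the trace estimate: one must ensure that $\matK\nabla w\in H(\DIV;\Omega)$ so its normal trace is single-valued across the now-curved $C^1$ interfaces — which holds by mesh-alignment of $\matK$ and $\nabla\cdot(\matKT\nabla w)\in\LTWO(\Omega)$ — and that the continuous trace inequality \eqref{eq:continuous.trace} and the projector estimates \eqref{eq:piTe.approx} are at hand for curved star-shaped elements; both are already established in the present setting. The algebraic skeleton (commutation, the $\pKT{k+1}$ identity, the cancellations) is then identical to the flat case, the one thing to watch being that $\matKT^{\frac12}\nabla\eta_T$ is only $\HONE(T)^d$-regular — precisely the regularity supplied by $w|_T\in\HS{k+2}(T)$ and demanded by \eqref{eq:continuous.trace}.
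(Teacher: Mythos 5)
Your proof is correct and is precisely the argument the paper relies on: the paper itself only cites \cite{droniou.yemm:2022:robust} for this estimate, listing as ingredients the commutation property \eqref{eq:commutation}, the approximation properties \eqref{eq:piTe.approx}, the consistency of $\sKT$ \eqref{eq:sT.consistency} and the trace inequality \eqref{eq:continuous.trace} — exactly the tools you deploy, with the curved-mesh subtleties (single-valued normal trace of $\matK\nabla w$ across $C^1$ interfaces, validity of \eqref{eq:continuous.trace} and \eqref{eq:piTe.approx} on curved star-shaped elements) correctly identified. Your handling of the $k=0$ edge case in the $s=2$ approximation estimate is also right and is a detail worth keeping.
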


The global operators \(\pKh{k+1}:\Uhk\to\POLY{k+1}(\Th)\) and \(\piKhe{k+1}:\HONE(\Th)\to\POLY{k+1}(\Th)\) are defined such that their actions restricted to an element \(T\in\Th\) are that of \(\pKT{k+1}\) and \(\piKTe{k+1}\). The global interpolator \(\Ihk:\HONE(\Omega)\to\Uhk\) is defined as \(\Ihk v \defeq ((\piTzr{k}v)_{T\in\Th},(\piFzr{k}v)_{F\in\Fh})\). 

\begin{theorem}[Energy and $L^2$ error estimates]\label{thm:error}
	Let $u\in\HONEzr(\Omega)$ be the exact solution to equation \eqref{eq:weak.form} and suppose the additional regularity $u \in \HS{k+2}(\Th)$. Let $\uluh$ be the exact solution to the discrete problem \eqref{eq:discrete.problem}. Then the following error estimates hold:
	\begin{itemize}
		\item \emph{Energy estimate}.
		\begin{equation}\label{eq:energy.error}
			\norm[\rma, \matK, h]{\uluh - \Ihk u} + \seminorm[\HONE(\Th)]{\pKh{k+1}\uluh - u} \lesssim \errorRHS{u}.
		\end{equation}
		\item \emph{$L^2$ estimate}. Suppose additionally that the domain $\Omega$ is convex and $\matK=\mat{I}$ is the identity matrix, then optimal convergence in $L^2$-norm holds:
		\begin{equation}\label{eq:L2.error}
			\norm[\Omega]{\pKh{k+1}\uluh - u} \lesssim h^{k+2}\seminorm[\HS{k+2}(\Th)]{u},
		\end{equation}
		where the seminorm $\seminorm[\HS{s}(\Th)]{\cdot}$ is defined as the square-root of the sum of squares of $\seminorm[\HS{s}(T)]{\cdot}$ for any $s\in\bbN$.
	\end{itemize}
\end{theorem}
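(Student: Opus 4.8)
The proof is a textbook HHO error analysis: a ``third Strang lemma''/energy argument for \eqref{eq:energy.error}, then an Aubin--Nitsche duality argument for \eqref{eq:L2.error}. All the curved-mesh-specific ingredients it needs --- the trace inequalities \eqref{eq:continuous.trace}--\eqref{eq:discrete.trace}, the approximation property \eqref{eq:piTe.approx}, the commutation identity \eqref{eq:commutation}, and the consistency bound of Theorem \ref{thm:consistency} --- are already in place, so what remains is bookkeeping, passing between weighted and unweighted seminorms by ellipticity where needed. For the energy estimate, set $\ulbm{e}_h\defeq\uluh-\Ihk u$; since $u\in\HONEzr(\Omega)$ its trace vanishes on boundary faces, so $\piFzr{k}(u|_F)=0$ there, $\Ihk u\in\Uhkzr$, and $\ulbm{e}_h\in\Uhkzr$ is an admissible test function. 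Using $\norm[\rma,\matK,h]{\cdot}^2=\aKh(\cdot,\cdot)$, the scheme \eqref{eq:discrete.problem}--\eqref{eq:discrete.src.term}, and $-\nabla\cdot(\matKT\nabla u)=f$ in $\LTWO(\Omega)$ (from \eqref{eq:weak.form}), one obtains the basic error identity
\[
\norm[\rma,\matK,h]{\ulbm{e}_h}^2=\aKh(\uluh,\ulbm{e}_h)-\aKh(\Ihk u,\ulbm{e}_h)=-\brac[\Omega]{\nabla\cdot(\matKT\nabla u),e_h}-\aKh(\Ihk u,\ulbm{e}_h)=\calE_h(u;\ulbm{e}_h),
\]
with $e_h$ the element component of $\ulbm{e}_h$; hence $\norm[\rma,\matK,h]{\ulbm{e}_h}\lesssim\errorRHS{u}$ by \eqref{eq:cons.error}. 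For the reconstruction term, write $\pKh{k+1}\uluh-u=\pKh{k+1}\ulbm{e}_h+(\piKhe{k+1}u-u)$ via \eqref{eq:commutation}; since $\aKT(\ulvT,\ulvT)=\seminorm[\matK,\HONE(T)]{\pKT{k+1}\ulvT}^2+\sKT(\ulvT,\ulvT)$ with $\sKT$ positive semi-definite, the first summand is controlled in the broken $\HONE$-seminorm by $\norm[\rma,\matK,h]{\ulbm{e}_h}$, and the second by \eqref{eq:piTe.approx} with $s=1$; summing over $T\in\Th$ proves \eqref{eq:energy.error}.

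For the $L^2$ estimate, take $\Omega$ convex and $\matK=\mat{I}$, put $\xi\defeq\pKh{k+1}\uluh-u$, and introduce the dual problem: $z\in\HONEzr(\Omega)$ with $-\Delta z=\xi$, which by convexity satisfies $z\in\HS2(\Omega)$ and $\norm[\HS2(\Omega)]{z}\lesssim\norm[\Omega]{\xi}$. Splitting $\xi=\pKh{k+1}\ulbm{e}_h+(\piKhe{k+1}u-u)$, the second part contributes $\norm[\Omega]{u-\piKhe{k+1}u}\lesssim h^{k+2}\seminorm[\HS{k+2}(\Th)]{u}$ (the $L^2$ approximation of the Ritz projector). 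For the first part I would test the dual problem against the element component of $\ulbm{e}_h$, integrate by parts element by element, and reorganise using the identity $\aKh(\ulbm{e}_h,\cdot)=\calE_h(u;\cdot)$ on $\Uhkzr$, the symmetry of $\aKh$, and the consistency error $\calE_h(z;\cdot)$ of the dual solution. This expresses $\norm[\Omega]{\xi}^2$, up to terms already bounded by $h^{k+2}\seminorm[\HS{k+2}(\Th)]{u}\norm[\Omega]{\xi}$, as a sum of two ``primal error $\times$ dual error'' pairings: $\calE_h(z;\ulbm{e}_h)$, which a low-regularity version of \eqref{eq:cons.error} bounds by $\big(\sum_{T\in\Th}[\hT\seminorm[\HS2(T)]{z}]^2\big)^{1/2}\norm[\rma,\matK,h]{\ulbm{e}_h}\lesssim h\,\norm[\HS2(\Omega)]{z}\,h^{k+1}\seminorm[\HS{k+2}(\Th)]{u}$; and $\calE_h(u;\Ihk z)$, where the missing power of $h$ is recovered in the usual way, i.e., replacing $\Ihk z$ by $\Ihk z-\Ihk r$ for a continuous, boundary-vanishing element-wise polynomial quasi-interpolant $r$ of $z$ --- legitimate because $\matK=\mat{I}$ makes $\matK\nabla u\in H(\DIV,\Omega)$ and $r$ has no inter-element jumps --- so that $|\calE_h(u;\Ihk z)|\lesssim\errorRHS{u}\,\norm[\rma,\matK,h]{\Ihk(z-r)}\lesssim h^{k+1}\seminorm[\HS{k+2}(\Th)]{u}\,h\,\norm[\HS2(\Omega)]{z}$. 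Inserting $\norm[\HS2(\Omega)]{z}\lesssim\norm[\Omega]{\xi}$ and dividing by $\norm[\Omega]{\xi}$ yields \eqref{eq:L2.error}.

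The energy estimate is immediate once Theorem \ref{thm:consistency} is available; the work lies in the $L^2$ argument, which needs two facts beyond what the excerpt states: a low-regularity ($\HS2$) variant of Theorem \ref{thm:consistency}, obtained by rerunning its proof with $k+2$ replaced by $2$; and, for a curved mesh, a boundedness/interpolation estimate $\norm[\rma,\matK,h]{\Ihk w}\lesssim\seminorm[\HONE(\Omega)]{w}$ together with a Scott--Zhang-type interpolant $r$ with $r|_{\partial\Omega}=0$ and $\norm[\rma,\matK,h]{\Ihk(w-r)}\lesssim h\,\seminorm[\HS2(\Omega)]{w}$. In both, the single genuinely curved-mesh point is that the face projectors $\piFzr{k}$ act onto the \emph{non-polynomial} spaces $\calP^k(F)$: one must verify that $v-\piFzr{k}v$ on a curved face decays optimally, which follows from $\POLY{0}(F)\subset\calP^k(F)$ and the continuous trace inequality \eqref{eq:continuous.trace} (plus a Poincaré inequality on the star-shaped element $T$). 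Everything else reduces to the flat-mesh argument.
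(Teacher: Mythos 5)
Your energy-norm argument and the overall duality skeleton for the $L^2$ bound match the paper's: the paper defers \eqref{eq:energy.error} to the reference (it follows from Assumption \ref{assum:stability}, the commutation property \eqref{eq:commutation}, and Theorem \ref{thm:consistency} exactly as you sketch), and for \eqref{eq:L2.error} it likewise reduces everything to a superconvergent bound on the single term $\calE_h(u;\Ihk z_g)$. You have also correctly identified the curved-mesh obstruction, namely that $\piFzr{k}$ projects onto the non-polynomial space $\calP^k(F)$ so the usual ``the face projector preserves traces of element polynomials'' step is unavailable.

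The gap is precisely in your treatment of $\calE_h(u;\Ihk z)$. Replacing $\Ihk z$ by $\Ihk(z-r)$ for a conforming, boundary-vanishing piecewise polynomial $r$ presupposes $\calE_h(u;\Ihk r)=0$, and this is false. For $\matK=\mat{I}$ the consistency error has the boundary representation (cf.\ \eqref{eq:l2.proof.1})
\begin{equation*}
\calE_h(u;\ulvh)=\sum_{T\in\Th}\Brac{\brac[\partial T]{\nabla(u-\piKTe{k+1}u)\cdot\norT,\,\vFT-\vT}-\sKT(\ITk u,\ulvT)}.
\end{equation*}
With $\ulvh=\Ihk r$ the stabilisation part does vanish by \eqref{eq:sT.polynomial.consistency}, and the facts you invoke ($\nabla u\in H(\DIV;\Omega)$, $r$ single-valued and zero on $\partial\Omega$) kill only the portion of the boundary pairing in which $\nabla u\cdot\norT$ meets the face component $\piFTzr{k}r$; the residual $\sum_{T}\brac[\partial T]{\nabla(u-\piKTe{k+1}u)\cdot\norT,\,r-\piTzr{k}r}$ survives, so the claimed inequality $|\calE_h(u;\Ihk z)|\lesssim\errorRHS{u}\,\norm[\rma,\matK,h]{\Ihk(z-r)}$ does not follow. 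Estimating that residual (or, equivalently, working with $\Ihk z_g$ directly as the paper does) is where the actual content lies: one must first drop $\piFzr{k}$ from the term $\nabla\piKTe{k+1}u\cdot\norTF$ --- legitimate only because $\nabla\POLY{k+1}(T)\cdot\norTF\subset\calP^k(F)$ by the very design of the face space --- and use flux continuity of $\nabla u$ together with $z_g|_{\partial\Omega}=0$ to replace $\piFTzr{k}z_g$ by $z_g$ outright; the remaining factor $z_g-\piTzr{k}z_g$ then yields the extra power of $\hT$ via the trace inequality \eqref{eq:continuous.trace}, a Poincar\'e--Wirtinger inequality, and the $H^1$-approximation of $\piTzr{k}$ (which, incidentally, requires $k\ge1$ at this step). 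So your route is salvageable, but only by carrying out essentially the paper's computation on the term your argument discards; as written, the ``recovered in the usual way'' step is the missing proof.
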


\begin{remark}
	The $L^2$-error estimate is stated with identity diffusion, corresponding to a Poisson problem. However, the result follows trivially (with a hidden constant depending additionally on the anisotropy of $\matK$) for any constant diffusion tensor $\matK$ \cite[cf.][Remark 3.21]{di-pietro.droniou:2020:hybrid}.
\end{remark}

\begin{proof}[Proof of Theorems \ref{thm:consistency} and \ref{thm:error}]
The estimates \eqref{eq:cons.error} and \eqref{eq:energy.error} are provided in \cite{droniou.yemm:2022:robust} and rely only on the design conditions stated in Assumption \ref{assum:stability}, the commutation property \eqref{eq:commutation}, the approximation properties of the elliptic projector \eqref{eq:piTe.approx}, the consistency of $\sKT$ \eqref{eq:sT.consistency}, Lemma \ref{lem:ah.norm}, as well as standard trace and inverse estimates provided in Section \ref{sec:model.and.mesh}.

To prove \eqref{eq:L2.error} we require a slightly different approach to that of \cite[Theorem 2.32]{di-pietro.droniou:2020:hybrid}. In particular, as $\piFzr{k}$ is not a polynomial projector \cite[Equation (2.78)]{di-pietro.droniou:2020:hybrid} does not hold in our case. However, the remainder of the proof is the same so we only have to show that 
\begin{equation}
	\sup_{g\in\LTWO(\Omega):\norm[\Omega]{g}\le 1}|\calE_h(u; \Ihk z_g)| \lesssim h^{k+2}\seminorm[\HS{k+2}(\Th)]{u},
\end{equation}
where $z_g$ is the solution to the dual problem 
\[
	\a(v, z_g) = \brac[\Omega]{g, v} \qquad \forall v\in\HONEzr(\Omega).
\]
As we have assumed $\Omega$ to be convex, the following elliptic regularity holds:
\begin{equation}\label{eq:elliptic.regularity}
	\norm[\HS{2}(\Omega)]{z_g} \lesssim \norm[\Omega]{g}.
\end{equation}
Moreover, as $\matK=\mat{I}$, the following equality established in the proof of \cite[Lemma 2.18]{di-pietro.droniou:2020:hybrid} holds true:
\begin{equation}\label{eq:l2.proof.1}
	\calE_h(u; \Ihk z_g) = \sum_{T\in\Th} \Brac{\brac[\partial T]{\nabla (u - \piKTe{k+1}u) \cdot \norT, \piFTzr{k} z_g - \piTzr{k} z_g} - \sKT(\ITk u, \ITk z_g)}.
\end{equation}
The sum over the boundary term in \eqref{eq:l2.proof.1} can be written as follows,
\begin{multline*}
	\sum_{T\in\Th} \brac[\partial T]{\nabla (u - \piKTe{k+1}u) \cdot \norT, \piFTzr{k} z_g} \\ = \sum_{T\in\Th}\sum_{F\in\FT}\brac[F]{\nabla u \cdot \norTF, \piFzr{k} z_g} + \sum_{T\in\Th}\sum_{F\in\FT}\brac[F]{\nabla \piKTe{k+1}u \cdot \norTF, \piFzr{k} z_g}.
\end{multline*}
As $\nabla \piKTe{k+1}u \cdot \norTF\in\calP^{k}(F)$ we may drop the projector $\piFzr{k}$ to write
\[
	\sum_{T\in\Th}\sum_{F\in\FT}\brac[F]{\nabla \piKTe{k+1}u \cdot \norTF, \piFzr{k} z_g} = \sum_{T\in\Th}\sum_{F\in\FT}\brac[F]{\nabla \piKTe{k+1}u \cdot \norTF, z_g}
\]
As $\nabla u \in \bmH(\DIV;\Omega)$, the fluxes of $u$ are continuous across every internal face $F\in\Fhi$. Therefore, as $\piFzr{k} z_g = 0$ for all $F\in\Fhb$ (due to $z_g = 0$ on $\partial \Omega$), it holds that
\[
	\sum_{T\in\Th}\sum_{F\in\FT}\brac[F]{\nabla u \cdot \norTF, \piFzr{k} z_g} = 0 = \sum_{T\in\Th}\sum_{F\in\FT}\brac[F]{\nabla u \cdot \norTF, z_g}.
\]
Substituting back into \eqref{eq:l2.proof.1} yields
\begin{equation*}
	\calE_h(u; \Ihk z_g) = \sum_{T\in\Th} \Brac{\brac[\partial T]{\nabla (u - \piKTe{k+1}u) \cdot \norT, z_g - \piTzr{k} z_g} - \sKT(\ITk u, \ITk z_g)}.
\end{equation*}
It follows from a Cauchy--Schwarz inequality and the consistency \eqref{eq:sT.consistency} that
\[
	\sKT(\ITk u, \ITk z_g) \le \sKT(\ITk u, \ITk u)^\frac12 \sKT(\ITk z_g, \ITk z_g)^\frac12 \lesssim \hT^{k+1}\seminorm[\matK,\HS{k+2}(T)]{u} \hT \seminorm[\matK,\HS{2}(T)]{z_g}.
\]
It also follows from a Cauchy--Schwarz inequality, the continuous trace inequality \eqref{eq:continuous.trace} and the approximation properties \eqref{eq:piTe.approx} that
\begin{multline*}
	\brac[\partial T]{\nabla (u - \piKTe{k+1}u) \cdot \norT, z_g - \piTzr{k} z_g} \le \norm[\partial T]{\nabla (u - \piKTe{k+1}u)\cdot\norT} \norm[\partial T]{z_g - \piTzr{k} z_g} \\ \lesssim \hT^{k+1}\seminorm[\HS{k+2}(T)]{u}\hT^{-\frac12}\norm[\partial T]{z_g - \piTzr{k} z_g}.
\end{multline*}
Thus, we need to prove that 
\begin{equation*}
%	\label{eq:l2.proof.2}
	\hT^{-\frac12}\norm[\partial T]{z_g - \piTzr{k} z_g} \lesssim \hT \seminorm[\HS{2}(T)]{z_g}
\end{equation*}
and the proof follows from the elliptic regularity \eqref{eq:elliptic.regularity} and the bound $\norm[\Omega]{g}\le 1$. By a continuous trace inequality and a Poincar\'{e}--Wirtinger inequality
\[
	\hT^{-\frac12}\norm[\partial T]{z_g - \piTzr{k} z_g} \lesssim \norm[T]{\nabla(z_g - \piTzr{k} z_g)}.
\]
The result holds due to the $H^1$-approximation properties of the $L^2$-projector \cite[Lemma 1.43]{di-pietro.droniou:2020:hybrid} which remain valid in curved domains.
\end{proof}

\section{Analysis of the stabilisation}\label{sec:stabilisation}

We consider here the stabilisation bilinear form defined by
\begin{multline}\label{eq:sT.def}
	\sKT(\ulvT, \ulwT) = \brac[T]{\matKT\nabla (\vT - \piTzr{k}\pKT{k+1}\ulvT), \nabla (\wT - \piTzr{k}\pKT{k+1}\ulwT)} \\
	+ \hT^{-1}\brac[\matK,\bdryT]{\vFT - \piFTzr{k}\pKT{k+1}\ulvT, \wFT - \piFTzr{k}\pKT{k+1}\ulwT},
\end{multline}
however, the arguments we use to show robustness on curved meshes extend seamlessly to more general choices of stability such as those considered in \cite[Section 4]{droniou.yemm:2022:robust}. It is clear that $\sKT$ satisfies \eqref{eq:sT.polynomial.consistency} so it remains to prove that \eqref{eq:sT.stability.and.boundedness} holds.

\begin{lemma}\label{lem:h1.bound.l2.projectors}
	It holds for all $v\in\HONE(T)$ that
%	\begin{equation}\label{eq:h1.bound.l2.projectors}
%		\norm[T]{\nabla \piTzr{k} v}^2 + \hT^{-1}\norm[\partial T]{v - \piTzr{k} v}^2 + \hT^{-1}\norm[\partial T]{v - \piFTzr{k} v}^2 \lesssim \norm[T]{\nabla v}^2.
%	\end{equation}
	\begin{equation}\label{eq:h1.bound.l2.projectors}
	\seminorm[\matK,\HONE(T)]{\piTzr{k} v}^2 + \hT^{-1}\norm[\matK, \partial T]{\piFTzr{k} v - \piTzr{k} v}^2 \lesssim \alphaT\seminorm[\matK,\HONE(T)]{v}^2.
	\end{equation}
\end{lemma}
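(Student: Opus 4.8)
The plan is to estimate the two contributions to the left-hand side of \eqref{eq:h1.bound.l2.projectors} separately, in each case trading the $\matKT$-weighted quantities for unweighted ones before applying scalar (unweighted) arguments. The factor $\alphaT$ will appear precisely at those exchanges: since $\matKT$ is constant with spectrum in $[\ulKT,\olKT]$ we have $\ulKT\norm[T]{\nabla v}^2\le\seminorm[\matK,\HONE(T)]{v}^2$, and since $\matKT\norT\cdot\norT$ is a Rayleigh quotient of the unit normal we have $\norm[\matK,\bdryT]{\cdot}^2\le\olKT\norm[\bdryT]{\cdot}^2$. After unweighting, the tools are the continuous trace inequality \eqref{eq:continuous.trace} and the $L^2$/$H^1$ stability and approximation properties of the $L^2$-orthogonal projectors, all of which hold on the (possibly curved) star-shaped elements $T$ by \cite[Lemma 1.43]{di-pietro.droniou:2020:hybrid} and the discussion in Section~\ref{sec:model.and.mesh}.

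For the volumetric term I would write $\seminorm[\matK,\HONE(T)]{\piTzr{k}v}\le\olKT^{1/2}\norm[T]{\nabla\piTzr{k}v}$, use $H^1$-stability of $\piTzr{k}$ (the $s=1$ case of the projector estimate, $\norm[T]{\nabla(v-\piTzr{k}v)}\lesssim\norm[T]{\nabla v}$) to get $\norm[T]{\nabla\piTzr{k}v}\lesssim\norm[T]{\nabla v}\le\ulKT^{-1/2}\seminorm[\matK,\HONE(T)]{v}$, and conclude $\seminorm[\matK,\HONE(T)]{\piTzr{k}v}^2\lesssim\alphaT\seminorm[\matK,\HONE(T)]{v}^2$.

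The boundary term is where the curved geometry genuinely bites: on a curved face $F$ the restriction $\piTzr{k}v|_F$ does \emph{not} belong to $\calP^k(F)$, so the standard device of inserting $\piFTzr{k}\piTzr{k}v$ is unavailable. Instead I would exploit $\POLY{0}(F)\subset\calP^k(F)$, which makes $\piFzr{k}$ an $L^2(F)$-contraction that reproduces constants: with $c:=\piTzr{0}v$, on each $F\in\FT$ one has $\piFzr{k}v-\piTzr{k}v=\piFzr{k}(v-c)+(c-\piTzr{k}v)$, whence a triangle inequality gives $\norm[F]{\piFzr{k}v-\piTzr{k}v}\le 2\norm[F]{v-c}+\norm[F]{v-\piTzr{k}v}$. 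Summing over $F\in\FT$ reduces matters to $\norm[\bdryT]{v-\piTzr{0}v}$ and $\norm[\bdryT]{v-\piTzr{k}v}$; each is controlled by applying \eqref{eq:continuous.trace} to $v-\piTzr{j}v\in\HONE(T)$ together with $\norm[T]{v-\piTzr{j}v}\lesssim\hT\norm[T]{\nabla v}$ (Poincar\'e--Wirtinger for $j=0$, projector approximation for $j=k$) and $\norm[T]{\nabla(v-\piTzr{j}v)}\lesssim\norm[T]{\nabla v}$, giving $\norm[\bdryT]{v-\piTzr{j}v}^2\lesssim\hT\norm[T]{\nabla v}^2$. Hence $\hT^{-1}\norm[\matK,\bdryT]{\piFTzr{k}v-\piTzr{k}v}^2\lesssim\olKT\norm[T]{\nabla v}^2=\alphaT\ulKT\norm[T]{\nabla v}^2\le\alphaT\seminorm[\matK,\HONE(T)]{v}^2$, and adding this to the volumetric bound yields \eqref{eq:h1.bound.l2.projectors}. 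The main obstacle is exactly the curved-face observation just flagged (that $\piTzr{k}v|_F\notin\calP^k(F)$, circumvented via the constant $c$); the rest is routine HHO bookkeeping.
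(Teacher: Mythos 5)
Your proposal is correct and follows essentially the same route as the paper: both arguments trade the $\matKT$-weights for $\olKT$ and $\ulKT$ (producing the $\alphaT$ factor), reduce the boundary term to controlling $\norm[\bdryT]{v-\piTzr{0}v}$ and $\norm[\bdryT]{v-\piTzr{k}v}$ by exploiting $\POLY{0}(F)\subset\calP^k(F)$ (the paper via the best-approximation property of $\piFzr{k}$, you via its contraction property plus reproduction of constants --- equivalent facts), and finish with the continuous trace inequality, Poincar\'{e}--Wirtinger, and the $H^1$-boundedness of $\piTzr{k}$.
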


\begin{proof}
	We first note the bound
	\[
		\seminorm[\matK,\HONE(T)]{\piTzr{k} v}^2 + \hT^{-1}\norm[\matK, \partial T]{\piFTzr{k} v - \piTzr{k} v}^2 \le \olKT \Brac{\seminorm[\HONE(T)]{\piTzr{k} v}^2 + \hT^{-1}\norm[\partial T]{\piFTzr{k} v - \piTzr{k} v}^2}
	\]
	which follows from the ellipticity \eqref{eq:local.uniform.elliptic} of $\matKT$. Consider, by a triangle inequality
	\[
		\hT^{-1}\norm[\partial T]{\piFTzr{k} v - \piTzr{k} v}^2 \lesssim \hT^{-1}\norm[\partial T]{v - \piFTzr{k} v}^2 + \hT^{-1}\norm[\partial T]{v - \piTzr{k} v}^2.
	\]
	First, we wish to bound the term $\norm[\partial T]{v - \piFTzr{k} v}$. As $\piFTzr{k}$ is the $L^2$-orthogonal projector on $\calP^{k}(\Fh[T])$, it minimises its respective norm. Therefore, we may replace $\piFTzr{k} v$ with any element of $\calP^{k}(\Fh[T])$. In particular, as $\POLY{0}(T)|_{\partial T} \subset \calP^{k}(\Fh[T])$ it holds that
	\[
		\hT^{-1}\norm[\partial T]{v - \piFTzr{k} v}^2 \le \hT^{-1}\norm[\partial T]{v - \piTzr{0} v}^2.
	\]
	It follows from the continuous trace inequality \eqref{eq:continuous.trace} and a Poincar\'{e}--Wirtinger inequality that
	\[
		\hT^{-1}\norm[\partial T]{v - \piTzr{0} v}^2 \lesssim \norm[T]{\nabla v}^2.
	\]
	Similarly, we apply the continuous trace inequality and a Poincar\'{e}--Wirtinger inequality on the term $\hT^{-1}\norm[\partial T]{v - \piTzr{k} v}^2$ to yield
	\[
		\hT^{-1}\norm[\partial T]{v - \piTzr{k} v}^2 \lesssim \norm[T]{\nabla(v - \piTzr{k} v)}^2 \lesssim \norm[T]{\nabla v}^2 + \norm[T]{\nabla \piTzr{k} v}^2,
	\]
	where we have applied a triangle inequality to reach the conclusion. It follows from \cite[Equation (1.77)]{di-pietro.droniou:2020:hybrid} (which invokes \cite[Equation (1.74)]{di-pietro.droniou:2020:hybrid} which does not rely on the elements being polytopal) that
	\[
		\norm[T]{\nabla \piTzr{k} v}^2 \lesssim \norm[T]{\nabla v}^2.
	\]
	Thus, we can conclude that
	\[
	\seminorm[\matK,\HONE(T)]{\piTzr{k} v}^2 + \hT^{-1}\norm[\matK, \partial T]{\piFTzr{k} v - \piTzr{k} v}^2 \le \olKT \norm[T]{\nabla v}^2.
	\]
	The proof follows by applying the ellipticity \eqref{eq:local.uniform.elliptic} of $\matKT$ to yield 
	\[
		\norm[T]{\nabla v}^2 \lesssim \ulKT^{-1}\seminorm[\matK,\HONE(T)]{v}^2.
	\]
%	It holds that
%	\[
%		\norm[T]{\nabla \piTzr{k} v}^2 = \norm[T]{\nabla \piTzr{k} (v - \piTzr{0}v)}^2 \lesssim \hT^{-2}\norm[T]{\piTzr{k} (v - \piTzr{0}v)}^2. 
%	\]
%	where we have applied the inverse Sobolev inequality \eqref{eq:inverse.sobolev} due to $\piTzr{k} (v - \piTzr{0}v)\in\POLY{k}(T)$. Invoking the $L^2$-boundedness of $\piTzr{k}$ and applying a Poincar\'{e}--Wirtinger inequality yields
%	\[
%		\hT^{-2}\norm[T]{\piTzr{k} (v - \piTzr{0}v)}^2 \le \hT^{-2}\norm[T]{v - \piTzr{0}v}^2 \lesssim \norm[T]{\nabla v}^2
%	\]
%	as required.
\end{proof}

\begin{remark}
	We note that the inclusion $\POLY{0}(F)\subset \calP^{k}(F)$ is crucial for the bound
	\[
		\hT^{-1}\norm[\matK, \partial T]{\piFTzr{k} v - \piTzr{k} v}^2 \lesssim \alphaT\seminorm[\matK,\HONE(T)]{v}^2
	\]
	to hold, and without this inclusion, coercivity cannot hold.
\end{remark}

\begin{lemma}[Coercivity]
	It holds for all $\ulvT\in\UTk$ that
	\begin{equation}\label{eq:sT.coercivity}
		\norm[1,\matK,T]{\ulvT}^2 \lesssim \aKT(\ulvT, \ulvT).
	\end{equation}
\end{lemma}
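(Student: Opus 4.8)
The bilinear form splits as $\aKT(\ulvT,\ulvT) = \seminorm[\matK,\HONE(T)]{p}^2 + \sKT(\ulvT,\ulvT)$ with $p \defeq \pKT{k+1}\ulvT$, and with the choice \eqref{eq:sT.def} the stabilisation is itself the sum of $\seminorm[\matK,\HONE(T)]{\vT - \piTzr{k}p}^2$ and $\hT^{-1}\norm[\matK,\bdryT]{\vFT - \piFTzr{k}p}^2$. The plan is to bound the two contributions to $\norm[1,\matK,T]{\ulvT}^2$ — namely $\seminorm[\matK,\HONE(T)]{\vT}^2$ and $\hT^{-1}\norm[\matK,\bdryT]{\vFT - \vT}^2$ — against $\seminorm[\matK,\HONE(T)]{p}^2$ and these two pieces of $\sKT(\ulvT,\ulvT)$, using only the trace and inverse estimates of Section \ref{sec:model.and.mesh}, the zero-average normalisations built into \eqref{eq:pT.closure}, a Poincar\'e--Wirtinger inequality, and Lemma \ref{lem:h1.bound.l2.projectors}.

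For the element term I would test the defining relation \eqref{eq:pT.def} of the potential reconstruction with the admissible function $w = \vT\in\POLY{k}(T)\subset\POLY{k+1}(T)$ and integrate the volume term by parts; this produces $\seminorm[\matK,\HONE(T)]{\vT}^2 = \brac[T]{\matKT\nabla p,\nabla\vT} - \brac[\bdryT]{\vFT - \vT,(\matKT\nabla\vT)\cdot\norT}$. The first term is controlled by Cauchy--Schwarz. For the second I would use $|(\matKT\nabla\vT)\cdot\norT| \le (\matKT\norT\cdot\norT)^{1/2}|\matKT^{1/2}\nabla\vT|$, a Cauchy--Schwarz against the weighted boundary product $\brac[\matK,\bdryT]{\cdot,\cdot}$, and then the discrete trace inequality \eqref{eq:discrete.trace} applied componentwise to the polynomial field $\matKT^{1/2}\nabla\vT$; this brings out a factor $\hT^{-1/2}\seminorm[\matK,\HONE(T)]{\vT}$, so after cancelling one power of $\seminorm[\matK,\HONE(T)]{\vT}$ one obtains $\seminorm[\matK,\HONE(T)]{\vT}\lesssim \seminorm[\matK,\HONE(T)]{p} + \hT^{-1/2}\norm[\matK,\bdryT]{\vFT-\vT}$.

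It then remains to bound $\hT^{-1}\norm[\matK,\bdryT]{\vFT-\vT}^2$ by $\aKT(\ulvT,\ulvT)$, after which substituting into the previous estimate closes the argument; summing the two pieces then gives \eqref{eq:sT.coercivity}. I would split $\vFT-\vT = (\vFT-\piFTzr{k}p) + (\piFTzr{k}p-\piTzr{k}p) + (\piTzr{k}p-\vT)$ on $\bdryT$. The first summand gives exactly the boundary half of $\sKT(\ulvT,\ulvT)$. The last summand is $-(\vT-\piTzr{k}p)\in\POLY{k}(T)$ with zero integral over $T$ (combine \eqref{eq:pT.closure} with the mean-preservation of $\piTzr{k}$), so a discrete trace inequality followed by a Poincar\'e--Wirtinger inequality controls its boundary norm by $\seminorm[\matK,\HONE(T)]{\vT-\piTzr{k}p}$, the element half of $\sKT(\ulvT,\ulvT)$. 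The middle summand $\piFTzr{k}p-\piTzr{k}p$ restricted to $\bdryT$ is precisely the quantity estimated in Lemma \ref{lem:h1.bound.l2.projectors} (taken with $v=p$), hence bounded in terms of $\seminorm[\matK,\HONE(T)]{p}$.

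The step I expect to be the main obstacle is this boundary term, and within it the middle summand: on curved faces $\vT|_{\bdryT}$ need not lie in $\calP^k(\Fh[T])$, so one cannot simply drop $\piFTzr{k}$ off $\vT$ as in the planar case, and it is exactly here that the enrichment by $\POLY{0}(F)$ is used — it makes constants available to the face projector so that the Poincar\'e--Wirtinger step passes through $\piFTzr{k}$ (this is the mechanism flagged in the remark after Lemma \ref{lem:h1.bound.l2.projectors}). A secondary point of care is to keep every estimate phrased in the $\matKT$-weighted norms rather than passing through unweighted $L^2$-norms and reinserting the spectral bounds, so that the anisotropy dependence stays at the level already recorded in Lemma \ref{lem:h1.bound.l2.projectors} and Assumption \ref{assum:stability}.
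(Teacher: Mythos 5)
Your argument is correct, and for the boundary contribution it coincides with the paper's proof: the same three-way splitting of $\vFT-\vT$ through $\piFTzr{k}\pKT{k+1}\ulvT$ and $\piTzr{k}\pKT{k+1}\ulvT$, the same application of Lemma \ref{lem:h1.bound.l2.projectors} (with $v=\pKT{k+1}\ulvT$) to the middle piece, and the same trace plus Poincar\'e--Wirtinger treatment of the last piece, using the zero mean guaranteed by \eqref{eq:pT.closure}. Where you genuinely diverge is the element term: the paper simply inserts $\piTzr{k}\pKT{k+1}\ulvT$ by a triangle inequality, so that one piece is the volumetric half of $\sKT(\ulvT,\ulvT)$ from \eqref{eq:sT.def} and the other is again absorbed by Lemma \ref{lem:h1.bound.l2.projectors}; you instead test \eqref{eq:pT.def} with $w=\vT$, integrate by parts, and use the discrete trace inequality \eqref{eq:discrete.trace} on $\matKT^{\frac12}\nabla\vT$ to obtain $\seminorm[\matK,\HONE(T)]{\vT}\lesssim \seminorm[\matK,\HONE(T)]{\pKT{k+1}\ulvT}+\hT^{-\frac12}\norm[\matK,\bdryT]{\vFT-\vT}$. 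Both routes close; the paper's is shorter because Lemma \ref{lem:h1.bound.l2.projectors} already does the work, while yours has the mild advantage of controlling the element term without touching the stabilisation, so it would carry over verbatim to any $\sKT$ that merely controls the boundary difference. Two small remarks: the obstacle you flag at the end (not being able to drop $\piFTzr{k}$ off $\vT|_{\bdryT}$) never actually arises in your decomposition, since the middle summand is handled wholesale by Lemma \ref{lem:h1.bound.l2.projectors} --- though you correctly identify the $\POLY{0}(F)$ enrichment as the mechanism that makes that lemma, and hence coercivity, work; and both your proof and the paper's in fact yield $\norm[1,\matK,T]{\ulvT}^2\lesssim \alphaT\,\aKT(\ulvT,\ulvT)$, i.e.\ the $\alphaT$-weighted form of \eqref{eq:sT.stability.and.boundedness} rather than the $\alphaT$-free inequality displayed in the lemma statement, so your care with the weighted norms pays off exactly as intended.
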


\begin{proof}
	It follows from the definition \eqref{eq:discrete.norm.def} of $\norm[1,\matK,T]{\cdot}$ that
	\begin{multline*}
		\norm[1,\matK,T]{\ulvT}^2 = \seminorm[\matK,\HONE(T)T]{\vT}^2 + \hT^{-1}\norm[\matK,\partial T]{\vFT - \vT}^2 \\ \lesssim \seminorm[\matK,\HONE(T)T]{\vT - \piTzr{k}\pKT{k+1}\ulvT}^2 + \seminorm[\matK,\HONE(T)T]{\piTzr{k}\pKT{k+1}\ulvT}^2+ \hT^{-1}\norm[\matK,\partial T]{\vFT - \piFTzr{k}\pKT{k+1}\ulvT}^2 \\  + \hT^{-1}\norm[\matK,\partial T]{\piFTzr{k}\pKT{k+1}\ulvT - \piTzr{k}\pKT{k+1}\ulvT}^2 + \hT^{-1}\norm[\matK,\partial T]{\piTzr{k}\pKT{k+1}\ulvT - \vT}^2,
	\end{multline*}
	where we have added and subtracted $\piTzr{k}\pKT{k+1}\ulvT$ to the volumetric term, and $\piTzr{k}\pKT{k+1}\ulvT$ and $\piFTzr{k}\pKT{k+1}\ulvT$ to the boundary term, and invoked triangle inequalities to reach the conclusion. Similar to the proof of Lemma \ref{lem:h1.bound.l2.projectors}, we apply the continuous trace inequality \eqref{eq:continuous.trace}, a Poincar\'{e}--Wirtinger inequality (due to the zero mean value of $\piTzr{k}\pKT{k+1}\ulvT - \vT$) and the ellipticity \eqref{eq:local.uniform.elliptic} of $\matKT$ to yield
	\[
		\hT^{-1}\norm[\matK, \partial T]{\piTzr{k}\pKT{k+1}\ulvT - \vT}^2 \lesssim \seminorm[\matK, \HONE(T)]{\vT - \piTzr{k}\pKT{k+1}\ulvT}^2.
	\]
	Therefore,
	\begin{align*}
		\norm[1,\matK,T]{\ulvT}^2 \les \alphaT\seminorm[\matK,\HONE(T)]{\vT - \piTzr{k}\pKT{k+1}\ulvT}^2 + \seminorm[\matK,\HONE(T)]{ \piTzr{k}\pKT{k+1}\ulvT}^2+ \hT^{-1}\norm[\matK,\partial T]{\vFT - \piFTzr{k}\pKT{k+1}\ulvT}^2 \\  \plus \hT^{-1}\norm[\matK,\partial T]{\piFTzr{k}\pKT{k+1}\ulvT - \piTzr{k}\pKT{k+1}\ulvT}^2 \\ \lea \alphaT\sKT(\ulvT, \ulvT) + \seminorm[\matK,\HONE(T)]{ \piTzr{k}\pKT{k+1}\ulvT}^2 + \hT^{-1}\norm[\matK,\partial T]{\piFTzr{k}\pKT{k+1}\ulvT - \piTzr{k}\pKT{k+1}\ulvT}^2.
	\end{align*}
	We can conclude from Lemma \ref{lem:h1.bound.l2.projectors} that
	\[
		\seminorm[\matK,\HONE(T)]{ \piTzr{k}\pKT{k+1}\ulvT}^2 + \hT^{-1}\norm[\matK,\partial T]{\piFTzr{k}\pKT{k+1}\ulvT - \piTzr{k}\pKT{k+1}\ulvT}^2 \lesssim \alphaT\seminorm[\matK,\HONE(T)]{ \pKT{k+1}\ulvT}^2,
	\]
	which combined with the definition of $\aKT$ yields the result.
\end{proof}

\begin{lemma}[Boundedness]
	It holds for all $\ulvT\in\UTk$ that
	\begin{equation}\label{eq:sT.boundedness}
		\aKT(\ulvT, \ulvT) \lesssim \alphaT\norm[1,\matK,T]{\ulvT}^2.
	\end{equation}
\end{lemma}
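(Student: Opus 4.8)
The plan is to bound separately the two contributions to $\aKT(\ulvT,\ulvT)=\brac[T]{\matKT\nabla\pKT{k+1}\ulvT,\nabla\pKT{k+1}\ulvT}+\sKT(\ulvT,\ulvT)$, with $\sKT$ the stabilisation \eqref{eq:sT.def}; the only ingredient beyond the classical polytopal argument is that every trace estimate on $T$ must be the curved-element version \eqref{eq:continuous.trace}--\eqref{eq:discrete.trace} of Section~\ref{sec:model.and.mesh}, which applies to polynomials on $T$ irrespective of its shape. I would first establish the auxiliary bound $\seminorm[\matK,\HONE(T)]{\pKT{k+1}\ulvT}\lesssim\norm[1,\matK,T]{\ulvT}$, which immediately handles the first contribution. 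Testing \eqref{eq:pT.def} with $w=\pKT{k+1}\ulvT$, integrating the volumetric term by parts and regrouping gives
\[
\seminorm[\matK,\HONE(T)]{\pKT{k+1}\ulvT}^2=\brac[T]{\matKT\nabla\vT,\nabla\pKT{k+1}\ulvT}+\brac[\partial T]{\vFT-\vT,(\matKT\nabla\pKT{k+1}\ulvT)\cdot\norT}.
\]
The first term is bounded by $\seminorm[\matK,\HONE(T)]{\vT}\,\seminorm[\matK,\HONE(T)]{\pKT{k+1}\ulvT}$ through a weighted Cauchy--Schwarz inequality; for the boundary term I would write $(\matKT\nabla q)\cdot\norT=(\matKT^{\frac12}\nabla q)\cdot(\matKT^{\frac12}\norT)$ to control it by $\norm[\matK,\partial T]{\vFT-\vT}\,\norm[\partial T]{\matKT^{\frac12}\nabla\pKT{k+1}\ulvT}$, and then apply the discrete trace inequality \eqref{eq:discrete.trace} to the degree-$k$ polynomial $\matKT^{\frac12}\nabla\pKT{k+1}\ulvT$ to produce the factor $\hT^{-\frac12}\seminorm[\matK,\HONE(T)]{\pKT{k+1}\ulvT}$. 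Dividing by $\seminorm[\matK,\HONE(T)]{\pKT{k+1}\ulvT}$ (the statement being trivial if it vanishes) and recalling \eqref{eq:discrete.norm.def} yields the auxiliary bound, hence $\brac[T]{\matKT\nabla\pKT{k+1}\ulvT,\nabla\pKT{k+1}\ulvT}\lesssim\norm[1,\matK,T]{\ulvT}^2\le\alphaT\norm[1,\matK,T]{\ulvT}^2$.

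It then remains to estimate $\sKT(\ulvT,\ulvT)$. Its volumetric piece is handled by a triangle inequality and Lemma~\ref{lem:h1.bound.l2.projectors} applied to $v=\pKT{k+1}\ulvT$, giving $\seminorm[\matK,\HONE(T)]{\vT-\piTzr{k}\pKT{k+1}\ulvT}^2\lesssim\seminorm[\matK,\HONE(T)]{\vT}^2+\alphaT\seminorm[\matK,\HONE(T)]{\pKT{k+1}\ulvT}^2\lesssim\alphaT\norm[1,\matK,T]{\ulvT}^2$. For the boundary piece I would split, on $\partial T$,
\[
\vFT-\piFTzr{k}\pKT{k+1}\ulvT=(\vFT-\vT)+(\vT-\piTzr{k}\pKT{k+1}\ulvT)+(\piTzr{k}\pKT{k+1}\ulvT-\piFTzr{k}\pKT{k+1}\ulvT),
\]
and treat the three terms in turn. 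The first contributes $\hT^{-1}\norm[\matK,\partial T]{\vFT-\vT}^2\le\norm[1,\matK,T]{\ulvT}^2$ by \eqref{eq:discrete.norm.def}; the third is precisely the quantity bounded by $\alphaT\seminorm[\matK,\HONE(T)]{\pKT{k+1}\ulvT}^2$ in Lemma~\ref{lem:h1.bound.l2.projectors}; and for the middle term I would use the identity $\vT-\piTzr{k}\pKT{k+1}\ulvT=\piTzr{k}(\vT-\pKT{k+1}\ulvT)$ (since $\piTzr{k}\vT=\vT$), then the discrete trace inequality \eqref{eq:discrete.trace} on this degree-$k$ polynomial, the $L^2(T)$-stability of $\piTzr{k}$, a Poincar\'{e}--Wirtinger inequality on $T$ (admissible because $\vT-\pKT{k+1}\ulvT$ has zero mean on $T$ by \eqref{eq:pT.closure}), and finally \eqref{eq:local.uniform.elliptic} to return to the weighted seminorm, obtaining $\lesssim\alphaT\big(\seminorm[\matK,\HONE(T)]{\vT}^2+\seminorm[\matK,\HONE(T)]{\pKT{k+1}\ulvT}^2\big)\lesssim\alphaT\norm[1,\matK,T]{\ulvT}^2$. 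Summing, $\sKT(\ulvT,\ulvT)\lesssim\alphaT\norm[1,\matK,T]{\ulvT}^2$, which together with the first contribution gives \eqref{eq:sT.boundedness}.

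The point requiring the most care is the bookkeeping of the anisotropy ratio $\alphaT$: the naive treatment of the middle boundary term, splitting off $\piTzr{k}\pKT{k+1}\ulvT$ and invoking Lemma~\ref{lem:h1.bound.l2.projectors} a second time, produces a spurious $\alphaT^2$; keeping the degree-$(k+1)$ object $\pKT{k+1}\ulvT$ intact, exploiting that $\piTzr{k}$ is an $L^2$-contraction, and translating between weighted and unweighted norms only once (at the Poincar\'{e}--Wirtinger step) preserves the sharp linear dependence on $\alphaT$. Everything else is routine, provided all trace and inverse estimates invoked on $T$ are the curved-element ones of Section~\ref{sec:model.and.mesh}.
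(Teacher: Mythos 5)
Your proof is correct and follows essentially the same route as the paper: the same auxiliary bound $\seminorm[\matK,\HONE(T)]{\pKT{k+1}\ulvT}\lesssim\norm[1,\matK,T]{\ulvT}$ obtained by testing \eqref{eq:pT.def} with $w=\pKT{k+1}\ulvT$, integrating by parts and invoking the discrete trace inequality \eqref{eq:discrete.trace}, together with a three-term splitting of the boundary stabilisation term handled by Lemma \ref{lem:h1.bound.l2.projectors} and the curved-element trace and Poincar\'{e}--Wirtinger inequalities. The only cosmetic difference is that you pivot the boundary splitting around $\piTzr{k}\pKT{k+1}\ulvT$ where the paper uses $\pKT{k+1}\ulvT$; both yield the same linear dependence on $\alphaT$.
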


\begin{proof}
%	It follows from the definitions of $\aKT$ and $\sKT$ that
%	\begin{align*}
%		\aKT(\ulvT, \ulvT) \eq \norm[T]{\nabla \pKT{k+1}\ulvT}^2 + \norm[T]{\nabla (\vT - \piTzr{k}\pKT{k+1}\ulvT)}^2 + \hT^{-1}\norm[\matK,\partial T]{\vFT - \piFTzr{k}\pKT{k+1}\ulvT}^2 \nl
%		\les \norm[T]{\nabla \pKT{k+1}\ulvT}^2 + \norm[T]{\nabla \vT}^2 + \norm[T]{\nabla \piTzr{k}\pKT{k+1}\ulvT}^2 + \hT^{-1}\norm[\matK,\partial T]{\vFT - \vT} + \hT^{-1}\norm[\matK,\partial T]{\vT - \piFTzr{k}\pKT{k+1}\ulvT}^2
%	\end{align*}
Consider by a triangle inequality and Lemma \ref{lem:h1.bound.l2.projectors}
\begin{align*}
	\hT^{-1}\norm[\matK,\partial T]{\vFT &{}-  \piFTzr{k}\pKT{k+1}\ulvT}^2 \nl \les \hT^{-1}\norm[\matK,\partial T]{\vFT - \vT} + \hT^{-1}\norm[\matK,\partial T]{\vT - \pKT{k+1}\ulvT}^2 + \hT^{-1}\norm[\matK,\partial T]{\pKT{k+1}\ulvT - \piFTzr{k}\pKT{k+1}\ulvT}^2 \nl
	\les \hT^{-1}\norm[\matK,\partial T]{\vFT - \vT} + \alphaT\seminorm[\matK,\HONE(T)]{\vT - \pKT{k+1}\ulvT}^2 + \seminorm[\matK,\HONE(T)]{\pKT{k+1}\ulvT}^2 \nl
	\les \alphaT\norm[1,\matK,T]{\ulvT}^2 + \alphaT\seminorm[\matK,\HONE(T)]{\pKT{k+1}\ulvT}^2
\end{align*}
Similarly, by a triangle inequality and Lemma \ref{lem:h1.bound.l2.projectors},
\begin{equation*}
	\seminorm[\matK,\HONE(T)]{\vT - \piTzr{k}\pKT{k+1}\ulvT}^2 \lesssim \seminorm[\matK,\HONE(T)]{\vT}^2 + \seminorm[\matK,\HONE(T)]{\piTzr{k}\pKT{k+1}\ulvT}^2 \\ \lesssim \norm[1,\matK,T]{\ulvT}^2 + \alphaT\seminorm[\matK,\HONE(T)]{\pKT{k+1}\ulvT}^2.
\end{equation*}
%Therefore, it follows from the definition \eqref{eq:sT.def} of $\sKT$ that
%\begin{align*}
%	\sKT(\ulvT, \ulvT) \eq \seminorm[\matK,\HONE(T)]{(\vT - \piTzr{k}\pKT{k+1}\ulvT)}^2 + \hT^{-1}\norm[\matK,\partial T]{\vFT - \piFTzr{k}\pKT{k+1}\ulvT}^2 \nl 
%	\les \hT^{-1}\norm[\matK,\partial T]{\vFT - \vT} + \seminorm[\matK,\HONE(T)]{(\vT - \piTzr{k}\pKT{k+1}\ulvT)}^2 + \seminorm[\matK,\HONE(T)]{\pKT{k+1}\ulvT}^2 \nl
%	\les \hT^{-1}\norm[\matK,\partial T]{\vFT - \vT} + \seminorm[\matK,\HONE(T)]{ \vT}^2 + \seminorm[\matK,\HONE(T)]{\pKT{k+1}\ulvT}^2 = \norm[1,\matK,T]{\ulvT}^2 + \seminorm[\matK,\HONE(T)]{\pKT{k+1}\ulvT}^2,
%\end{align*}
%where in the final line we have invoked a triangle inequality and Lemma \ref{lem:h1.bound.l2.projectors} to write
%\[
%	\seminorm[\matK,\HONE(T)]{(\vT - \piTzr{k}\pKT{k+1}\ulvT)}^2 \lesssim \seminorm[\matK,\HONE(T)]{ \vT}^2 + \seminorm[\matK,\HONE(T)]{\pKT{k+1}\ulvT}^2.
%\]
Thus, we need to prove that
\[
	\seminorm[\matK,\HONE(T)]{\pKT{k+1}\ulvT}^2 \lesssim \norm[1,\matK,T]{\ulvT}^2.
\]
It follows from the definition \eqref{eq:pT} of $\pKT{k+1}$ and an integration by parts that
\begin{align}\label{eq:sT.bound.proof.1}
	\seminorm[\matK,\HONE(T)]{\pKT{k+1}\ulvT}^2 \eq \brac[T]{\nabla\vT, \matKT\nabla\pKT{k+1}\ulvT} + \brac[\partial T]{\vFT - \vT, \matKT\nabla\pKT{k+1}\ulvT \cdot \norT} \nl
	\eq \brac[T]{\matKT^\frac12\nabla\vT, \matKT^\frac12\nabla\pKT{k+1}\ulvT} + \brac[\partial T]{\matKT^\frac12 \norT (\vFT - \vT), \matKT^\frac12\nabla\pKT{k+1}\ulvT} \nl
	\les\seminorm[\matK,\HONE(T)]{\pKT{k+1}\ulvT}\Brac{\seminorm[\matK,\HONE(T)]{ \vT} + \hT^{-\frac12}\norm[\matK,\partial T]{\vFT - \vT}}.
\end{align}
where we have applied Cauchy--Schwarz inequalities on both inner-products and the discrete trace inequality \eqref{eq:discrete.trace}.
% to yield the bound
%\begin{multline*}
%	\brac[\partial T]{\vFT - \vT, \matKT\nabla\pKT{k+1}\ulvT \cdot \norT} = \brac[\partial T]{\matKT^\frac12 \norT (\vFT - \vT), \matKT^\frac12\nabla\pKT{k+1}\ulvT} \\ \lesssim  \hT^{-\frac12}\norm[\matK,\partial T]{\vFT - \vT}\seminorm[\matK,\HONE(T)]{\pKT{k+1}\ulvT}.
%\end{multline*}
The proof follows by simplifying \eqref{eq:sT.bound.proof.1} by $\seminorm[\matK,\HONE(T)]{\pKT{k+1}\ulvT}$ and squaring.
\end{proof}

\section{Integration on curved domains}\label{sec:integration}

The design of integration methods on curved domains is an active area of research. In the recent article \cite{antolin.wei.ea:2022:robust} a quadrature rule for curved domains is developed by considering a decomposition into triangular or rectangular pyramids $\calT$ and a mapping $\bm{T}:[0,1]^d\to\calT$ for each decomposition. With knowledge of the Jacobian of such a mapping, integration can be performed on the pre-image of each $\calT$. The article \cite{chin:2021:scaled} develops an extension of the homogeneous integration rule developed in \cite{chin.lasserre.ea:2015:numerical} by considering a curved triangulation of the domain and constructing a scaled boundary parameterisation on each curved triangle.
%	. The authors observe that the HNI method can be interpreted as a parametersation of the domain in terms of its boundary, and the radial distance of each point from the origin. However, when the function to be integrated is homogeneous, the HNI rule exactly reproduces to integral along the radial direction, and thus the final integral is mapped to the boundary. The authors proceed to generalise this approach to non-homogeneous functions 
Here, we also consider an extension of the homogeneous integration rule, but the approach we take is quite different. We avoid the need to split the curved domain into sub-regions and directly map the integral onto the boundary by constructing a Poincar\'{e}-type operator which inverts the divergence operator. Indeed, this operator was briefly mentioned in the appendix of \cite{chin:2021:scaled}, however, we develop the ideas here without a sub-triangulation, and independent of dimension.

%however they devise an integration rule by considering a variant of the homogeneous integration rule developed in \cite{chin.lasserre.ea:2015:numerical}. Here, we follow a similar approach, but avoid the need to split the curved domain into sub-regions, by finding an inverse divergence to rewrite the integral over an element onto its boundary. We can then use this rule to devise a quadrature rule for elements in two dimensions.

We begin with the formula developed in \cite{chin.lasserre.ea:2015:numerical} to rewrite the integral onto the boundary of the element. This rule works by identifying a vector field
\begin{equation}\label{eq:div.inv.homogeneous}
	\bmF = \frac{\bmx v}{q+d}
\end{equation}
such that $\nabla \cdot \bmF = v$ for homogeneous functions $v$ of degree $q$. Therefore,
\begin{equation}\label{eq:integration.rule.homogeneous}
	\int_T v(\bmx)\dx = \int_{\partial T} \bmx\cdot\norT \frac{v(\bmx)}{q+d}\dS.
\end{equation}
%The integral on the boundary of $T$ can be evaluated piece-wise on each edge $F\in\Fh[T]$ using the parameterisation of each edge as described above.

We would like to extend this rule to non-homogeneous functions. We begin by searching for a vector field of the form 
\[
\bmF = 	g\hat{\bmr},
\]
such that $\nabla \cdot \bmF = v$ where $\hat{\bmr}$ denotes the unit vector in the radial direction. We find that the unknown function $g$ must satisfy
\[
\frac{1}{r^{d-1}}\frac{\partial}{\partial r}(r^{d-1} g) = v,
\]
where we denote by $r = |\bmx|$. A solution is given by
\[
g = \frac{1}{r^{d-1}} \int_{0}^r s^{d-1} v\big(\frac{s}{r}\bmx\big) \ds = r \int_{0}^1 t^{d-1} v(t \bmx) \dt.
\]
Thus, we have found an inverse divergence
\begin{equation}\label{eq:div.inv.from.zero}
	\bmF = r \hat{\bmr} \int_{0}^1 t^{d-1} v(t \bmx)\dt = \bmx \int_{0}^1 t^{d-1} v(t \bmx) \dt.
\end{equation}
Therefore, an integral over the element $T$ can be rewritten to its boundary as follows:
\begin{equation}\label{eq:integration.rule.from.zero}
	\int_T v(\bmx) \dx = \int_{\partial T} \bmx\cdot \norT \int_{0}^1 t^{d-1} v(t \bmx) \dt \dS.
\end{equation}
We note that if $v$ is a homogeneous function of degree $q$ (that is, $v(t \bmx) = t^q v(\bmx)$), then the inverse divergence formulae \eqref{eq:div.inv.homogeneous} and \eqref{eq:div.inv.from.zero} coincide and thus so do the rules \eqref{eq:integration.rule.homogeneous} and \eqref{eq:integration.rule.from.zero}. In this sense, the method can be considered an extension of the homogeneous integration rule developed in \cite{chin.lasserre.ea:2015:numerical}. 

If we instead consider a vector field of the form $\bmF = g\hat{\bmr}_0$ where $\hat{\bmr}_0$ is the unit radial direction from a shifted origin $\bmx_0$, we arrive at the more general formula
\begin{equation}\label{eq:div.inv.rule.general}
	\bmF = (\bmx - \bmx_0)\int_{0}^1 t^{d-1} v(t\bmx + (1-t)\bmx_0) \dt.
\end{equation}
Therefore, we may write
\begin{equation}\label{eq:integration.rule.general}
	\int_T v(\bmx) \dx = \int_{\partial T} (\bmx - \bmx_0) \cdot \norT \int_{0}^1 t^{d-1} v(t\bmx + (1-t)\bmx_0) \dt \dS.
\end{equation}

This is very useful if the element contains one or more planar faces. For a vertex with coordinates $\bmnu$, we can set $\bmx_0=\bmnu$ and it holds that $(\bmx - \bmnu) \cdot \norT = 0$ on any planar faces connected to the vertex $\bmnu$.  We note that if $T$ is not star-shaped with respect to $\bmnu$, then the integral $\int_{0}^1 t^{d-1} v(t\bmx + (1-t)\bmx_0) \dt$ will pass through points outside of $T$. Thus, one would require a sufficiently smooth extension of $v$ outside of $T$. However, for polynomials or functions analytic over $\Omega$ (such as an analytic source term), such an extension is trivial.

%We develop here an integration method for curved elements in two dimensions, however we also give some brief remarks on the extension to higher dimensions.

\subsection{A quadrature rule for curved edges in two dimensions}

For a given edge $E$, consider a parameterisation $\gamma_E : [t_0, t_1] \to E$, $t_0 < t_1$. Therefore, integration on curved edges is trivial:
\[
\int_E v(\bmx)\dE = \int_{t_0}^{t_1} v(\gamma_E(t)) |\gamma_E'(t)| \dt.
\]
The above integral can easily be approximated with a one-dimensional Gaussian quadrature rule. In particular, let $w_i$, $x_i$, $i=1,\dots,N$ be the weights and abscissae associated with a quadrature rule on $[0,1]$. Then we can generate weights $w_i^E$ and abscissae $\bmx_i^E$ on the edge $E$ as follows:
\begin{equation}\label{eq:weights.and.abscissae.edges}
	w_i^E = (t_1 - t_0) w_i |\gamma_E'(t_0 + (t_1 - t_0)x_i)|  \quad;\quad \bmx_i^E = \gamma_E(t_0 + (t_1 - t_0)x_i).
\end{equation}
In practise, we generally store an arc length parameterisation for each edge and thus the term $|\gamma_E'|$ is not required.

\subsection{A quadrature rule for elements in two dimensions}

In two dimensions the faces are edges and thus the boundary integral in \eqref{eq:integration.rule.general} can be evaluated on each edge $F\in\Fh[T]$ using the rule described in \eqref{eq:weights.and.abscissae.edges}. We let $w_i^F$ and $\bmx_i^F$, $i=1,\dots,N$ be the quadrature weights and abscissae associated with an edge $F\in\Fh[T]$ and $w_j$, $x_j$, $j=1,\dots,M$ be the weights and abscissae associated with a quadrature rule on $[0,1]$. We set $\bmnu$ to be the coordinate of a vertex of $T$ connected to the highest number of straight edges in $T$. We then consider the quadrature rule
\begin{equation}\label{eq:quadrature.rule.2D}
	\int_T v(\bmx) \dx \approx \sum_{F\in \Fh[T]} \sum_{i=1}^N \sum_{j=1}^M w_i^F(\bmx_i^F - \bmnu)\cdot \norT(\bmx_i^F) w_j x_j v(x_j \bmx_i^F + (1 - x_j)\bmnu).
\end{equation}
That is, we store weights
\[
w_i^F(\bmx_i^F - \bmnu)\cdot \norT(\bmx_i^F) w_j x_j,
\]
and abscissae
\[
x_j \bmx_i^F + (1 - x_j)\bmnu,
\]
for each $i=1,\dots,N$, $j=1,\dots,M$ and on each edge $F\in\Fh[T]$ that is not a straight edge connected to the vertex $\bmnu$. 

If $T$ is polygonal, then there always exists two straight edges connected to a vertex $\bmnu$. Thus, the rule described by \eqref{eq:quadrature.rule.2D} consists of $(|\Fh[T]| - 2) NM$ quadrature points. If we consider a Gauss-Legendre rule on each edge which is exact for polynomials of degree $k$, then we require to take $N = \lceil \frac{k + 1}{2} \rceil$. However, for the inverse divergence formula \eqref{eq:div.inv.rule.general} to reproduce polynomials of degree $k$ exactly, we require to take $M = \lceil \frac{k + 2}{2} \rceil$ due to the presence of the multiplier $t$. As $(\bmx_i^F - \bmnu)\cdot \norT(\bmx_i^F)$ is constant on polygonal $T$, equation \eqref{eq:quadrature.rule.2D} is exact for polynomials of degree $k$ and consists of $(|\Fh[T]| - 2) \lceil \frac{k + 1}{2} \rceil \lceil \frac{k + 2}{2} \rceil$ quadrature points.  We note this is a slightly larger number of quadrature points than the usual $(|\Fh[T]| - 2) \lceil \frac{k + 1}{2} \rceil^2$ required by splitting the polygon $T$ into $(|\Fh[T]| - 2)$ sub-triangles (an optimal sub-triangulation) and considering a Gauss-Legendre rule on each sub-triangle. However, \eqref{eq:quadrature.rule.2D} avoids the complex process of generating such a sub-triangulation. To avoid these additional quadrature points, one would need to consider a Gauss-Legendre rule on each edge, but a weighted Gaussian rule with the weight function $w(t)=t$ for the integral \eqref{eq:div.inv.rule.general}. This is not explored further here.

%\cred{Are you sure a G-L rule on a triangle has $\lceil \frac{k + 1}{2} \rceil^2$ points? It might actually be $\lceil \frac{k + 1}{2} \rceil \lceil \frac{k + 2}{2} \rceil$ anyway. I'll come back to this.}

%If we consider a Gauss-Legendre rule on $[0,1]$ which reproduces polynomials of degree $k$ exactly, then applying this rule to the formula \eqref{eq:div.inv.rule.general} will be exact for polynomials of degree $k-1$. If $T$ is polygonal then $(\bmx_j^F - \bmx_0)\cdot \norT(\bmx_j^F)$ is constant on each face and so the rule \eqref{eq:quadrature.rule.2D} will exactly reproduce polynomials of degree $k-1$. To recover this loss of accuracy, one would want to consider a Gauss-Legendre rule on each edge, but a weighted Gaussian rule with the weight function $w(t)=t$ for the integral \eqref{eq:div.inv.rule.general}. This is not explored further here.

\subsection{A quadrature rule for elements in three dimensions}

In three dimensions, a volumetric integral can be mapped onto the faces as follows,
\begin{equation}\label{eq:boundary.integral.rule.3D}
	\int_{T} v(\bmx) \dx = \sum_{F\in\Fh[T]}\int_F (\bmx - \bmx_0) \cdot \norTF \int_{0}^1 t^{2} v(t\bmx + (1-t)\bmx_0) \dt \dF.
\end{equation}
Thus, given a quadrature rule for each face $F\in\Fh[T]$, a quadrature rule for the element $T$ can be developed analogously to the two-dimensional case. On each face $F\in\Fh[T]$ let us define $v_F(\bmx) = (\bmx - \bmx_0) \cdot \norTF \int_{0}^1 t^{2} v(t\bmx + (1-t)\bmx_0) \dt$. 
%In order to integrate $v_F$ over $F$ we require a parameterisation of $F$. 
Take the planar region $\hat{F}\subset \bbR^2$ with (potentially curved) edges $\hat{\calE}_{\hat{F}}$ and a parameterisation $\bmgamma_F : \hat{F} \to F$. It holds that
\[
	\int_{T} v(\bmx) \dx = \sum_{F\in\Fh[T]} \int_{F} v_F(\bmx) \dF = \sum_{\hat{F}\in\hat{\calF}_T} \int_{\hat{F}} v_F(\bmgamma_F(\hat{\bmx})) J(\hat{\bmx}) \dxhat,
\]
where $J(\hat{\bmx}) = \sqrt{\det \bmJ^t(\hat{\bmx}) \bmJ(\hat{\bmx})}$ 
%\cred{[I am not entirely sure this is right, need to check]} 
and $\bmJ$ is the Jacobian matrix of the map $\bmgamma_F$.
It then follows from \eqref{eq:integration.rule.general} that 
\begin{multline}\label{eq:curved.face.integration.rule}
	\int_F v_F(\bmx) \dx = \int_{\hat{F}} v_F(\bmgamma_F(\hat{\bmx})) J(\hat{\bmx}) \dxhat \\ = \sum_{\hat{E}\in\hat{\calE}_{\hat{F}}} \int_{\hat{E}} (\hat{\bmx} - \hat{\bmx}_0) \cdot \nor_{\hat{F}\hat{E}} \int_0^1 s v_F(\bmgamma_F(s\hat{\bmx} + (1-s)\hat{\bmx}_0)) J(s\hat{\bmx} + (1-s)\hat{\bmx}_0) \ds \dEhat,
\end{multline}
where $\nor_{\hat{F}\hat{E}}$ denotes the unit normal directed out of $\hat{F}$ and towards $\hat{E}$. Therefore, given the parameterisation $\bmgamma_F$ and a parameterisation of each mapped edge $\hat{E}\in\hat{\calE}_{\hat{F}}$, the integral \eqref{eq:curved.face.integration.rule} can be evaluated analogously to the 2D case \eqref{eq:quadrature.rule.2D}.
%
%%\cred{My differential geometry is lacking here, but I think we can construct a mapping like with the planar case below such that $J=1$ (both forwards and backwards) and that $((\hat{\bmx} - \hat{\bmx}_0) \cdot \nor_{\hat{F}\hat{E}} = (\bmx - \bmgamma_F(\hat{\bmx}_0)) \cdot \nor_{FE}$. However, unlike the planar case it wouldn't be linear so we would get
%	\begin{multline}\label{eq:curved.face.integration.rule}
%		\int_F v_F(\bmx) = \sum_{E\in \calE_F} \int_E (\bmx - \bmx_{F,0}) \cdot \nor_{FE} \int_0^1 s v_F(\bmgamma_F(s\bmgamma_F^{-1}(\bmx) + (1-s)\bmgamma_F^{-1}(\bmx_{F,0}))\ds \dE,
%\end{multline}
%%for an arbitrary point $\bmx_{F,0}$ in the face $F$.}
%
\subsubsection{A note on planar faces}

If the face $F$ is planar, one can follow a procedure similar to that in \cite{antonietti.houston:2018:fast} to rewrite the integrals on each face onto the edges $E\in\calE_F$. We take $\bmgamma_F(\hat{\bmx}) = \bmx_F + \bmE \hat{\bmx}$ where $\bmx_F$ is a point in the face $F$ and $\bmE$ is an orthonormal matrix. Then it holds that $J(\hat{\bmx})\equiv1$ and 
\[
	\bmgamma_F(s\hat{\bmx} + (1-s)\hat{\bmx}_0) = s\bmgamma_F(\hat{\bmx}) + (1 - s)\bmgamma_F(\hat{\bmx}_0).
\]
Thus, we can map the integral \eqref{eq:curved.face.integration.rule} back to the edges of the face $F$ as follows,
\[
	\int_F v_F(\bmx) = \sum_{E\in \calE_F} \int_E (\bmgamma_F^{-1}(\bmx) - \hat{\bmx}_0) \cdot \nor_{\hat{F}\hat{E}} \int_0^1 s v_F(s\bmx + (1-s)\bmgamma_F(\hat{\bmx}_0)) \ds\dE.
\]
However, as $\bmE$ is orthonormal it preserves distance and therefore it holds that $(\bmgamma_F^{-1}(\bmx) - \hat{\bmx}_0) \cdot \nor_{\hat{F}\hat{E}} = (\bmx - \bmgamma_F(\hat{\bmx}_0)) \cdot \nor_{FE}$, where $\nor_{FE}$ denotes the unit normal directed out of $F$ and towards $E$. Moreover, the mapping $\bmgamma_F$ is onto, so we can choose $\hat{\bmx}_0$ such that $\bmgamma_F(\hat{\bmx}_0) = \bmx_{F,0}$ for an arbitrary point $\bmx_{F,0}\in F$. Therefore
\begin{equation}\label{eq:planar.face.integration.rule}
	\int_F v_F(\bmx) = \sum_{E\in \calE_F} \int_E (\bmx - \bmx_{F,0}) \cdot \nor_{FE} \int_0^1 s v_F(s\bmx + (1-s)\bmx_{F,0}) \ds\dE.
\end{equation}
Again, we may choose $\bmx_{F,0}$ to be the vertex of the face $F$ connected to the largest number of straight edges. The integral \eqref{eq:planar.face.integration.rule} is then evaluated in an identical manner as two-dimensional elements.

\section{Implementation}\label{sec:implementation}

The HHO method for curved edges is implemented using the open source C++ library PolyMesh \cite{PolyMesh}. We generate curved meshes by first considering uniform Cartesian meshes and `cutting' along a curve. The integrals are computed using the quadrature rule described by \eqref{eq:quadrature.rule.2D} where we take the one-dimensional integration rules to be Gauss-Legendre rules of degree $30$.

A basis is formed for the space $\calP^{k}(F)$ by first generating a spanning set by considering a canonical basis of $\POLY{k}(\Omega)^d$ and taking $\POLY{0}(F) + \POLY{k}(\Omega)^d\cdot \norF$. The linearly dependent basis functions are removed algebraically using the FullPivLU class found in the \texttt{Eigen} library, with documentation available at \url{https://eigen.tuxfamily.org/dox/classEigen_1_1FullPivLU.html}. This requires a threshold to be set which determines the point at which pivots are considered to be numerically zero. We set this value to $10^{-15}$. We note that for sufficiently small $h$ and large $k$ this can result in certain linearly \emph{in}dependent functions being removed from $\calP^{k}(F)$. However, as these functions are `close' to being linearly dependent, the method seems unaffected by their removal. The bases of both $\calP^{k}(F)$ and $\POLY{k}(T)$ are orthonormalised via a Gram-Schmidt process.

\subsection{Curved boundary}
We consider here the domain given by the rotated ellipse
\begin{equation}
	\Omega = \{(x,y)\in\bbR^2 : L(x,y) > 0\},
\end{equation}
where the level set $L:\R^2\to\R$ is defined by
\[
	L(x,y) = \alpha^2 - (x^2 + xy + y^2),
\]
with $\alpha = \frac{4}{5}$. We note the following parameterisation of $\partial\Omega$: $\gamma: [0, 2 \pi) \to \partial\Omega$,
\[
	\gamma(t) = \alpha\big(\frac{1}{\sqrt{3}} \cos(t) - \sin(t), \frac{1}{\sqrt{3}} \cos(t) + \sin(t)\big).
\]
The exact solution to problem \eqref{eq:weak.form} is taken to be
\[
	u = \sin\big(L(x,y)\big),
\]
with corresponding source term given by
\begin{align*}
	f \eq (-\Delta L) \cos\big(L(x,y)\big) + |\nabla L|^2 \sin\big(L(x,y)\big) \nl \eq 4 \cos\big(L(x,y)\big) + \big(5x^2+8xy+5y^2\big) \sin\big(L(x,y)\big).
\end{align*}

The relative error of the scheme is measured through the following three quantities:
\begin{equation*}
	E_{0,h} \defeq \frac{\norm[\LTWO(\Th)]{u - \pKh{k+1}\uluh}}{\norm[\LTWO(\Th)]{u}} \quad;\quad
	E_{1,h} \defeq \frac{\seminorm[\HONE(\Th)]{u - \pKh{k+1}\uluh}}{\seminorm[\HONE(\Th)]{u}}\quad;\quad
	E_{\a,h} \defeq \frac{\norm[\a,\matK,h]{\uluh - \Ihk u}}{\norm[\a,\matK,h]{\Ihk u}},
\end{equation*}
where the norm $\norm[\LTWO(\Th)]{\cdot}$ 
%and $\seminorm[\HONE(\Th)]{\cdot}$ are 
is defined as the square-root of the sum of squares of $\norm[\LTWO(T)]{\cdot}$.
% and $\seminorm[\HONE(T)]{\cdot}$ respectively. 
We note that if the mesh conforms to the domain $\Omega$ then $\norm[\LTWO(\Th)]{v} = \norm[\Omega]{v}$ for all $v\in \LTWO(\Omega)$.

We consider here two sequence of meshes of the domain $\Omega$. The curved meshes use an exact representation of the boundary, whereas the straight meshes take a piece-wise linear approximation of the boundary. The parameters of the mesh sequences are displayed in Table \ref{table:mesh.curved}. Both sequences of meshes have the same parameters. Example curved meshes are plotted in Figure \ref{fig:mesh.curved} and straight meshes are plotted in Figure \ref{fig:mesh.straight}.

\begin{table}[H]
\centering
\pgfplotstableread{data/poisson_mesh_data.dat}\loadedtable
\pgfplotstabletypeset
[
columns={MeshTitle,CurvedMeshSize,NbCells,NbInternalEdges}, 
columns/MeshTitle/.style={column name=Mesh \#},
columns/CurvedMeshSize/.style={column name=\(h\),/pgf/number format/.cd,fixed,zerofill,precision=4},
%columns/StraightMeshSize/.style={column name=\(h\) (Straight mesh),/pgf/number format/.cd,fixed,zerofill,precision=4},
columns/NbCells/.style={column name=Nb. Elements},
columns/NbInternalEdges/.style={column name=Nb. Internal Edges},
every head row/.style={before row=\toprule,after row=\midrule},
every last row/.style={after row=\bottomrule} 
%		sci=false
]\loadedtable
\caption{Parameters of the mesh sequences used for the curved boundary test}
\label{table:mesh.curved}
\end{table}

\begin{figure}[H]
	\centering
	\includegraphics[width=0.3\textwidth]{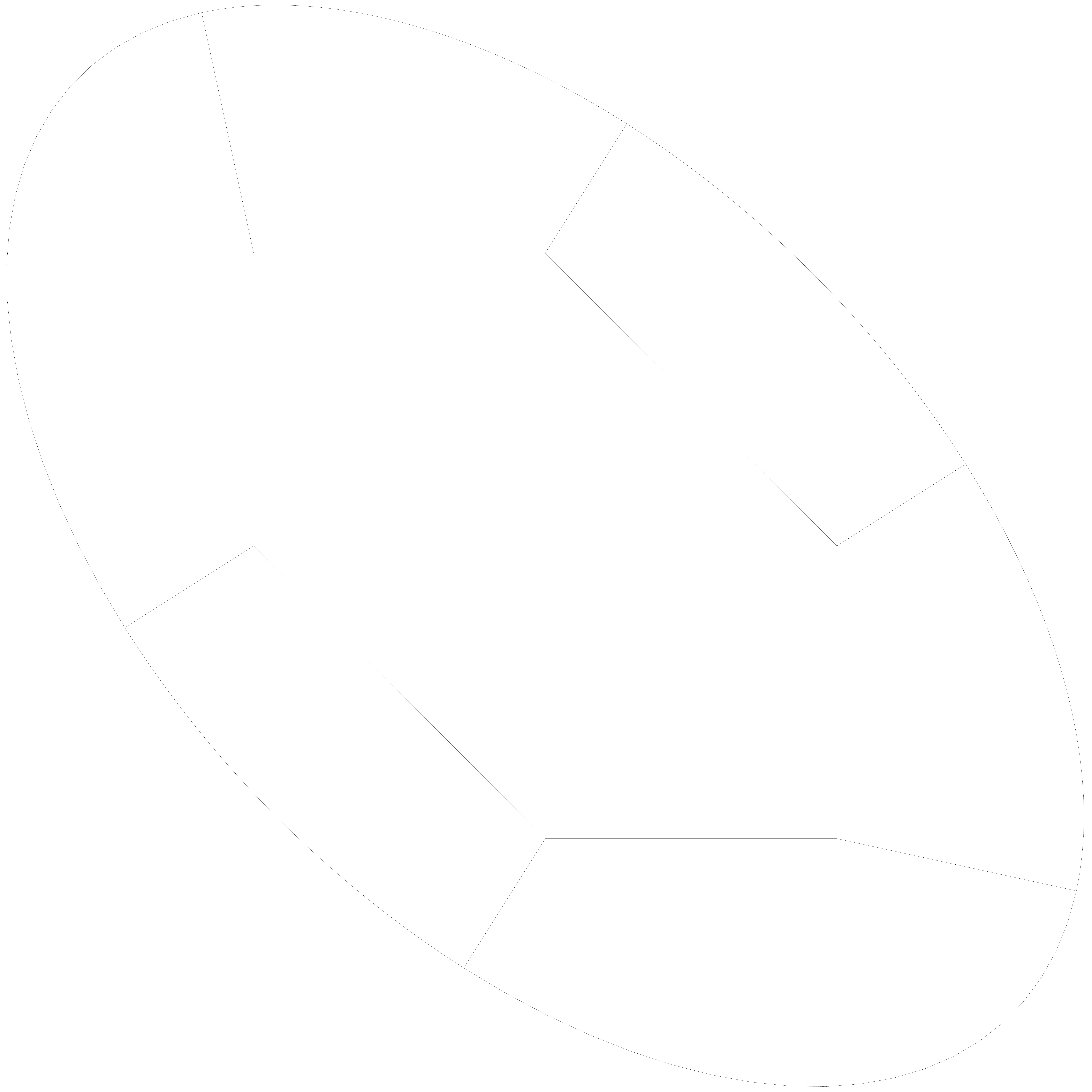} 
	\includegraphics[width=0.3\textwidth]{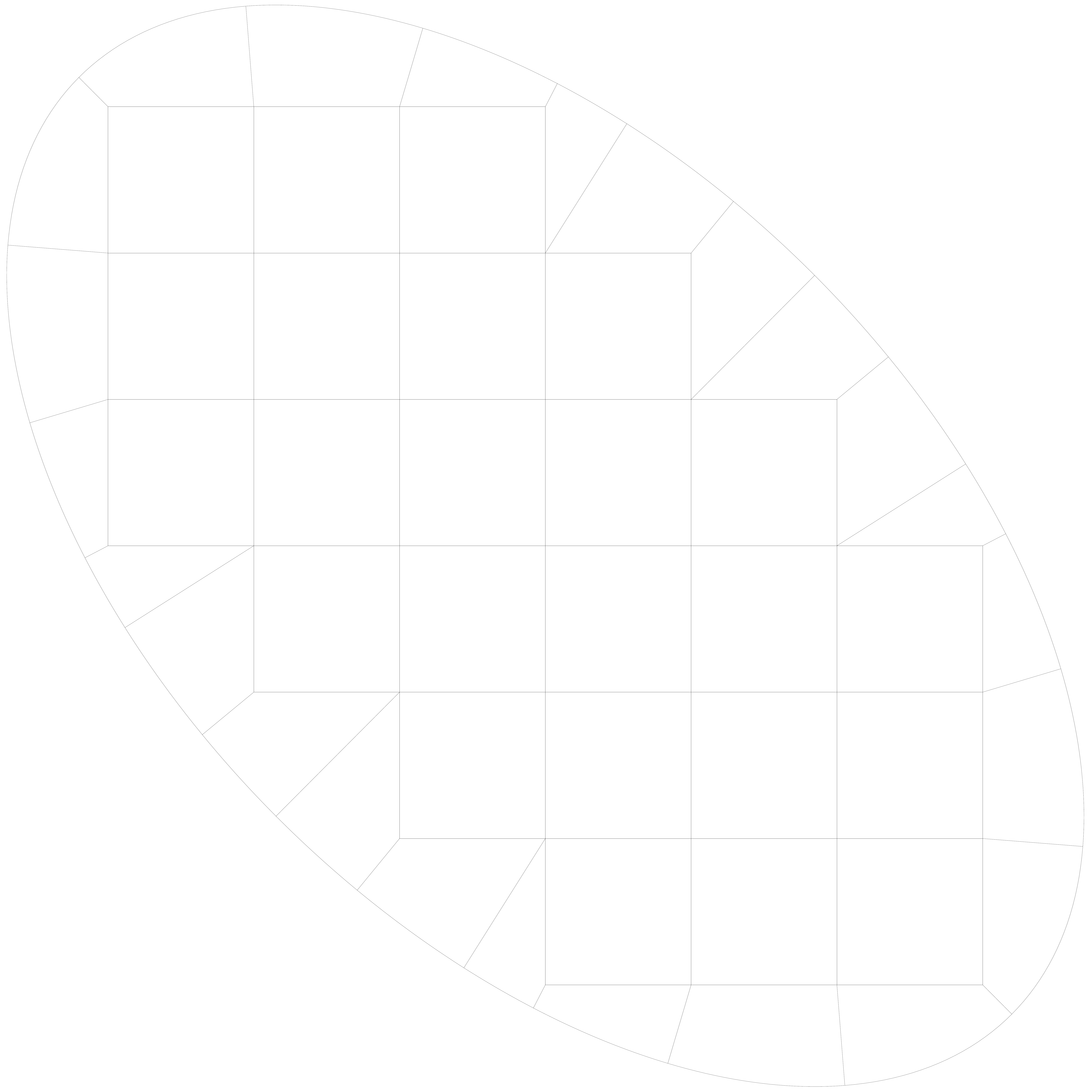} 
	\includegraphics[width=0.3\textwidth]{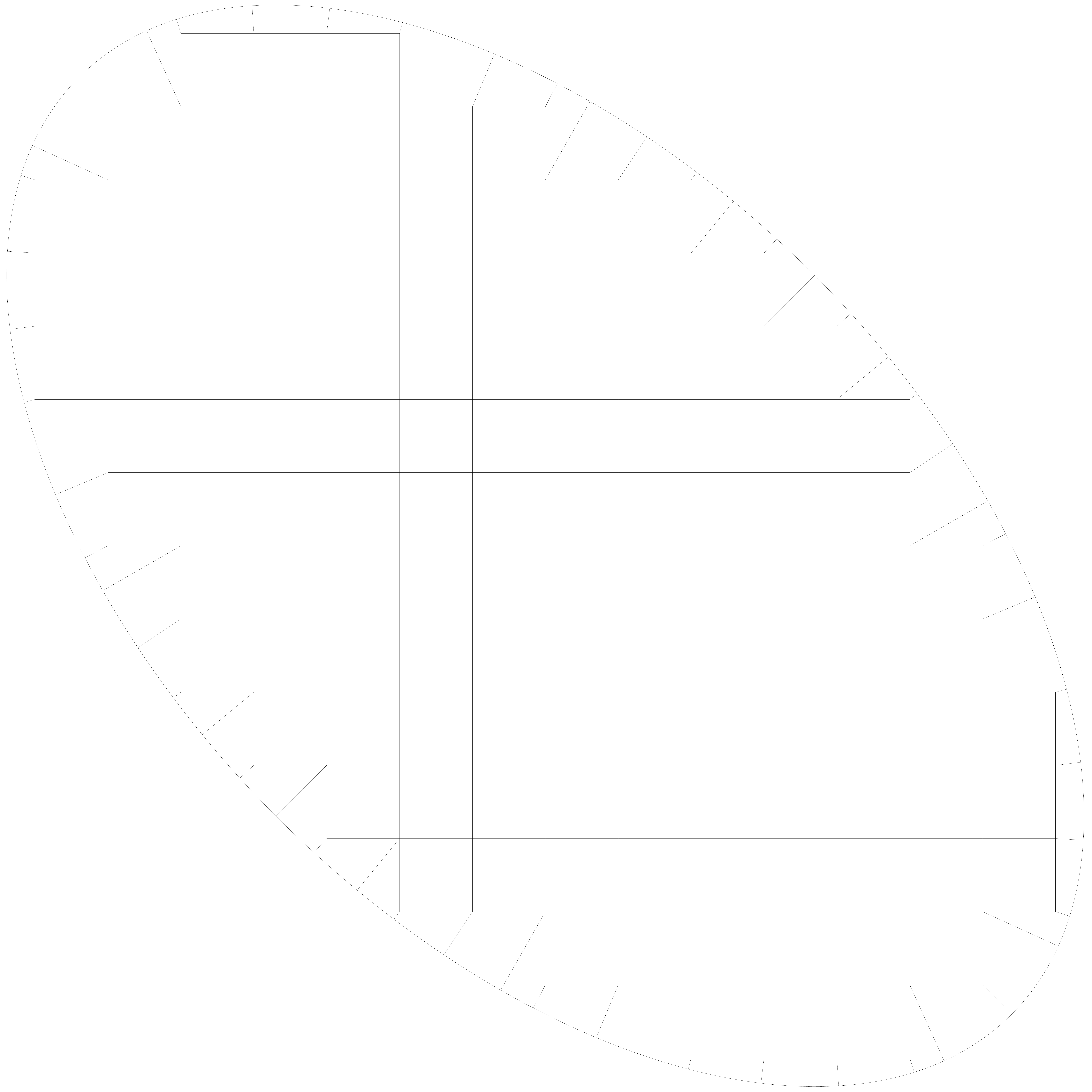} 
	\caption{Example curved meshes used for the curved boundary test}
	\label{fig:mesh.curved}
\end{figure}
\begin{figure}[H]
	\centering
	\includegraphics[width=0.3\textwidth]{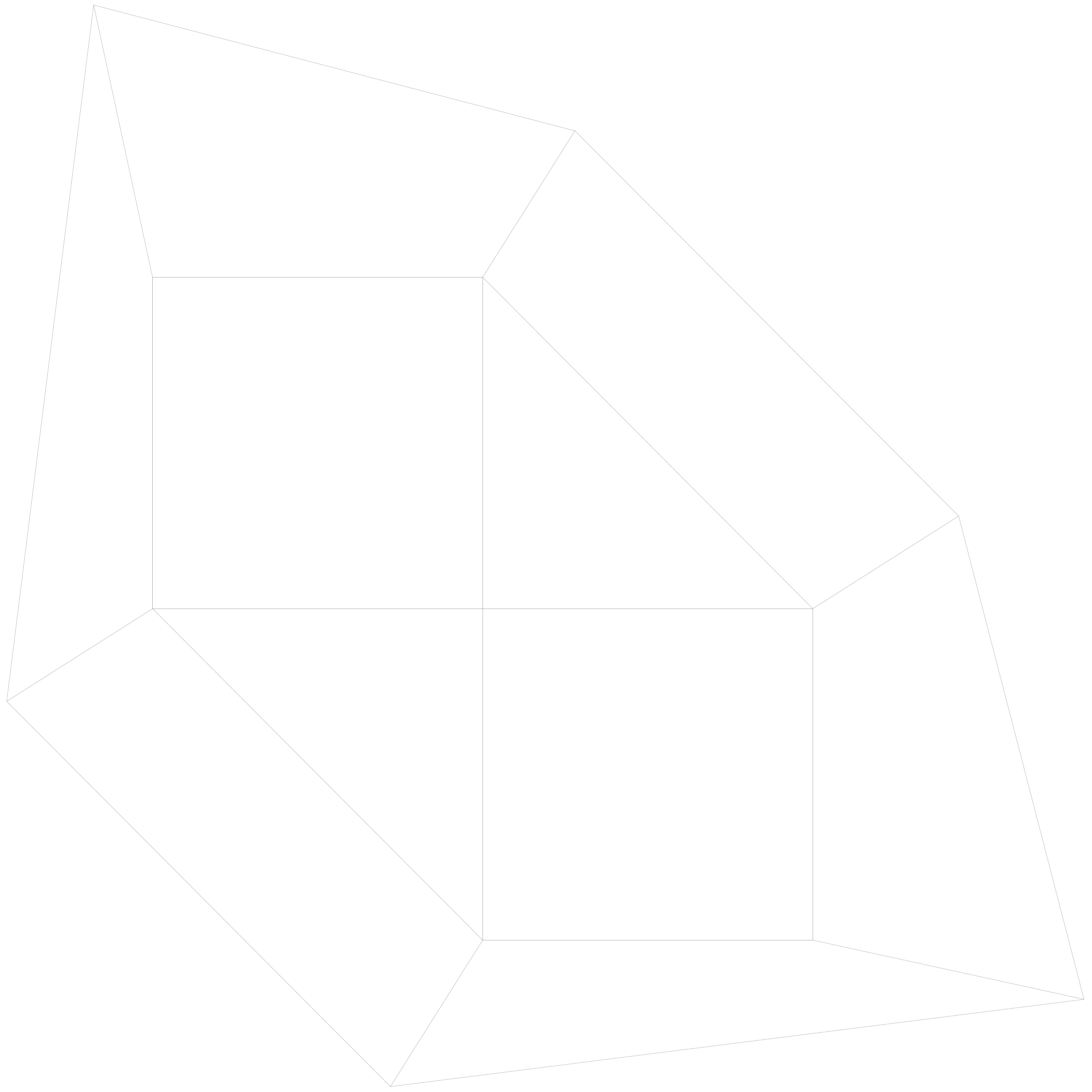} 
	\includegraphics[width=0.3\textwidth]{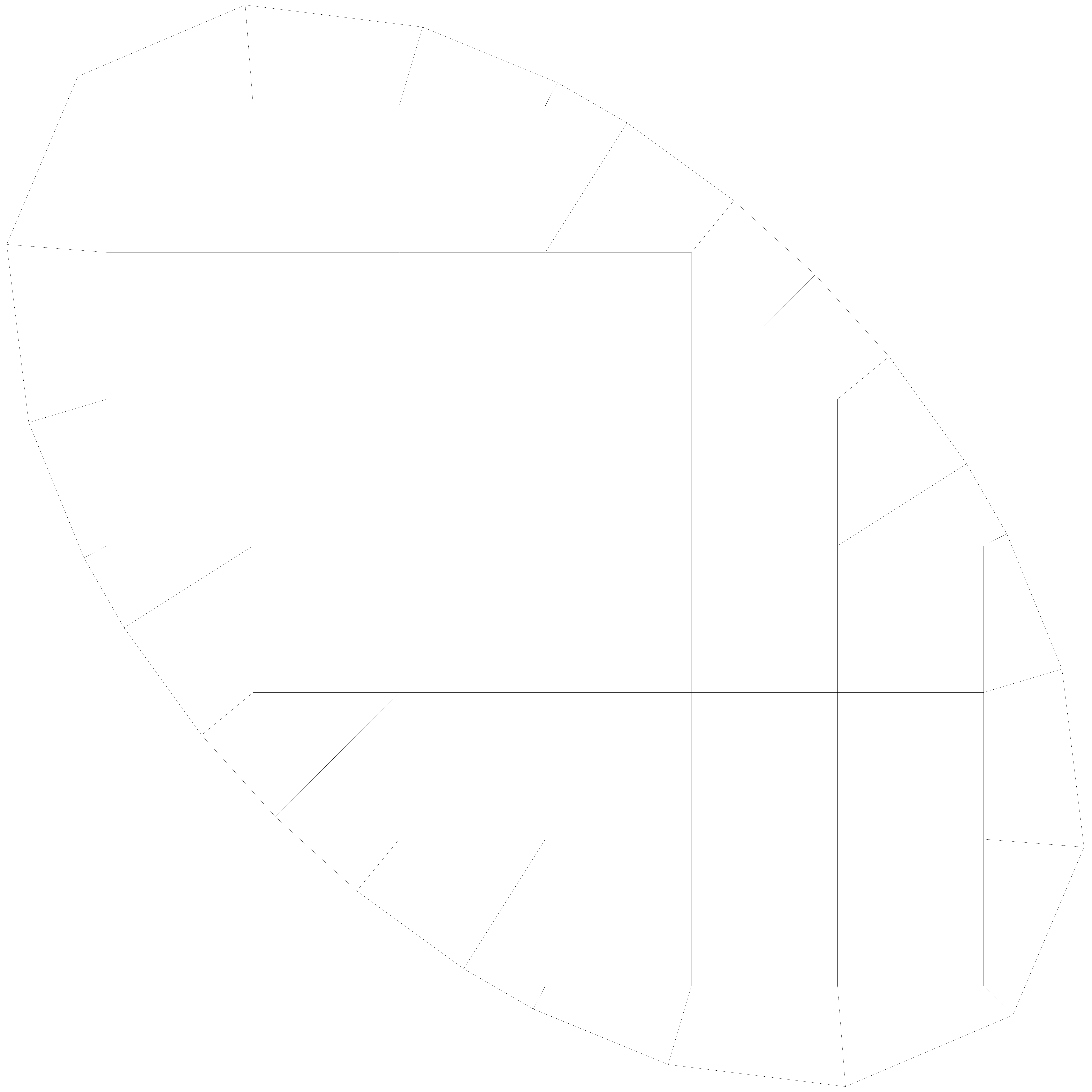} 
	\includegraphics[width=0.3\textwidth]{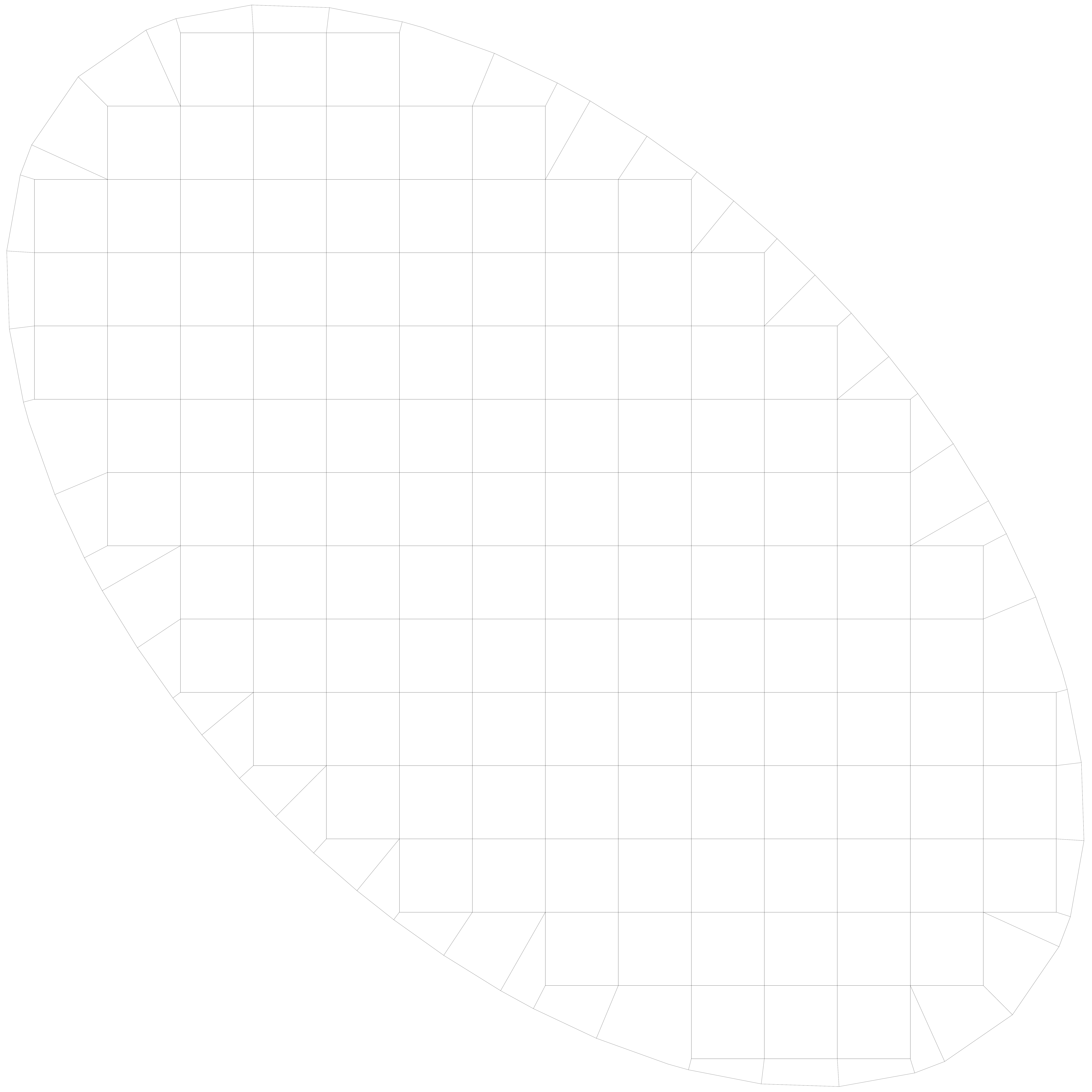} 
	\caption{Example straight meshes used for the curved boundary test}
	\label{fig:mesh.straight}
\end{figure}

In Figure \ref{fig:htest} we test both a curved HHO scheme and a classical HHO scheme (on straight meshes) with polynomial degrees given by $k=1$ and $k=3$. In both cases the curved HHO scheme on the fitted mesh observes significantly better convergence rates than the classical scheme on the straight mesh. While the scheme appears to converge optimally on curved meshes, it converges at most order $2$ on straight meshes. 

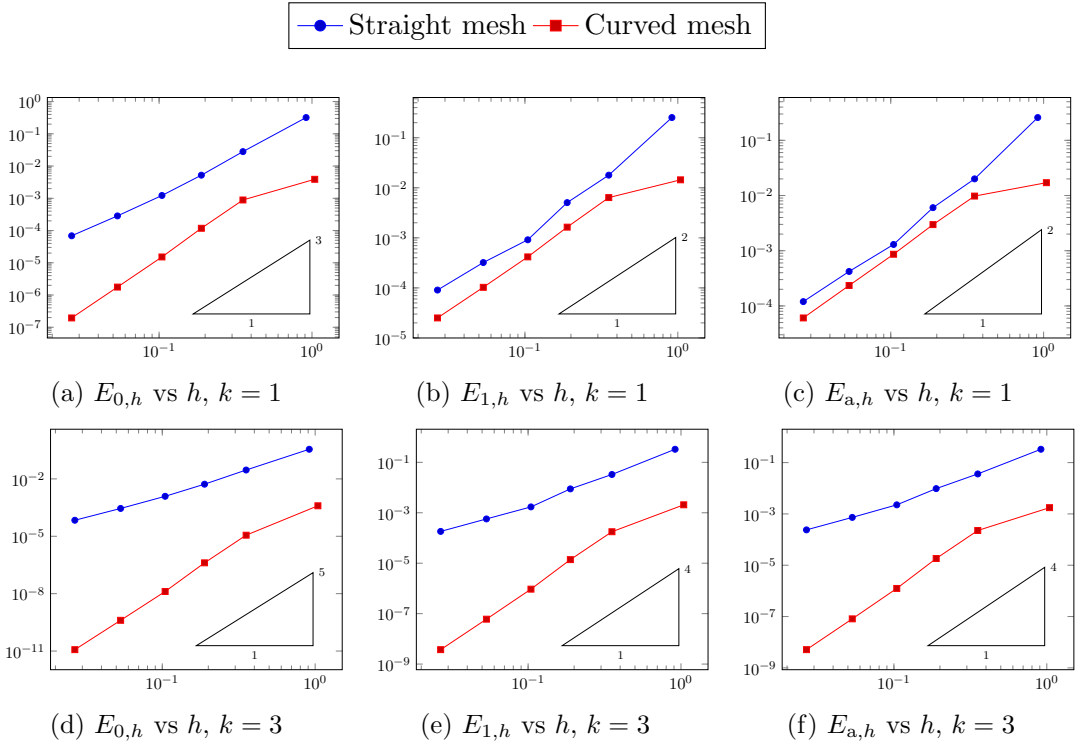
\begin{figure}[H]
	\centering
	\ref{graph.legend}
	\vspace{0.5cm}\\
	\subcaptionbox{$E_{0,h}$ vs $h$, \(k=1\)}
	{
		\begin{tikzpicture}[scale=0.56]
			\begin{loglogaxis}[ legend columns=2, legend to name=graph.legend ]
				\addplot table[x=MeshSize,y=L2Error] {data/straight_k1.dat};
				\addplot table[x=MeshSize,y=L2Error] {data/curved_k1.dat};
				\logLogSlopeTriangle{0.90}{0.4}{0.1}{3}{black};
				\legend{Straight mesh, Curved mesh};
			\end{loglogaxis}
		\end{tikzpicture}
	}
	\subcaptionbox{$E_{1,h}$ vs $h$, \(k=1\)}
	{
		\begin{tikzpicture}[scale=0.56]
			\begin{loglogaxis}
				\addplot table[x=MeshSize,y=H1Error] {data/straight_k1.dat};
				\addplot table[x=MeshSize,y=H1Error] {data/curved_k1.dat};
				\logLogSlopeTriangle{0.90}{0.4}{0.1}{2}{black};
			\end{loglogaxis}
		\end{tikzpicture}
	}
	\subcaptionbox{$E_{\a,h}$ vs $h$, \(k=1\)}
	{
		\begin{tikzpicture}[scale=0.56]
			\begin{loglogaxis}
				\addplot table[x=MeshSize,y=EnergyError] {data/straight_k1.dat};
				\addplot table[x=MeshSize,y=EnergyError] {data/curved_k1.dat};
				\logLogSlopeTriangle{0.90}{0.4}{0.1}{2}{black};
			\end{loglogaxis}
		\end{tikzpicture}
	}
%	\caption{$h$-version curved boundary test (\(k=1\))}
%	\label{fig:test.k1}
%\end{figure}
%%
%\begin{figure}[H]
%	\centering
%	\ref{graph.legend}
%	\vspace{0.5cm}
\\
\medskip
	\subcaptionbox{$E_{0,h}$ vs $h$, \(k=3\)}
	{
		\begin{tikzpicture}[scale=0.56]
			\begin{loglogaxis}
				\addplot table[x=MeshSize,y=L2Error] {data/straight_k3.dat};
				\addplot table[x=MeshSize,y=L2Error] {data/curved_k3.dat};
				\logLogSlopeTriangle{0.90}{0.4}{0.1}{5}{black};
			\end{loglogaxis}
		\end{tikzpicture}
	}
	\subcaptionbox{$E_{1,h}$ vs $h$, \(k=3\)}
	{
		\begin{tikzpicture}[scale=0.56]
			\begin{loglogaxis}
				\addplot table[x=MeshSize,y=H1Error] {data/straight_k3.dat};
				\addplot table[x=MeshSize,y=H1Error] {data/curved_k3.dat};
				\logLogSlopeTriangle{0.90}{0.4}{0.1}{4}{black};
			\end{loglogaxis}
		\end{tikzpicture}
	}
	\subcaptionbox{$E_{\a,h}$ vs $h$, \(k=3\)}
	{
		\begin{tikzpicture}[scale=0.56]
			\begin{loglogaxis}
				\addplot table[x=MeshSize,y=EnergyError] {data/straight_k3.dat};
				\addplot table[x=MeshSize,y=EnergyError] {data/curved_k3.dat};
				\logLogSlopeTriangle{0.90}{0.4}{0.1}{4}{black};
			\end{loglogaxis}
		\end{tikzpicture}
	}
	\caption{$h$-version curved boundary test}
	\label{fig:htest}
\end{figure}

In Figure \ref{fig:ktest.mesh2} we test the performance of both methods as $k$ increases on Mesh 2. While the scheme enjoys exponential convergence on the curved mesh, the classical method on the straight mesh does not converge. This is to be expected, as the straight mesh does not fit $\Omega$ exactly and so by increasing $k$ the scheme is converging to the solution of a different problem.

%\begin{figure}[H]
%	\centering
%	\ref{graph.legend}
%	\vspace{0.5cm}\\
%	\subcaptionbox{$E_{0,h}$ vs $k$}
%	{
%		\begin{tikzpicture}[scale=0.56]
%			\begin{semilogyaxis}
%				\addplot table[x=EdgeDegree,y=L2Error] {data/straight_ktest_mesh2_1.dat};
%				\addplot table[x=EdgeDegree,y=L2Error] {data/curved_ktest_mesh2_1.dat};
%			\end{semilogyaxis}
%		\end{tikzpicture}
%	}
%	\subcaptionbox{$E_{1,h}$ vs $k$}
%	{
%		\begin{tikzpicture}[scale=0.56]
%			\begin{semilogyaxis}
%				\addplot table[x=EdgeDegree,y=H1Error] {data/straight_ktest_mesh2_1.dat};
%				\addplot table[x=EdgeDegree,y=H1Error] {data/curved_ktest_mesh2_1.dat};
%			\end{semilogyaxis}
%		\end{tikzpicture}
%	}
%	\subcaptionbox{$E_{\a,h}$ vs $k$}
%	{
%		\begin{tikzpicture}[scale=0.56]
%			\begin{semilogyaxis}
%				\addplot table[x=EdgeDegree,y=EnergyError] {data/straight_ktest_mesh2_1.dat};
%				\addplot table[x=EdgeDegree,y=EnergyError] {data/curved_ktest_mesh2_1.dat};
%			\end{semilogyaxis}
%		\end{tikzpicture}
%	}
%	\caption{Error vs $k$, \(h\approx 1.0423\)}
%	\label{fig:ktest.mesh1}
%\end{figure}

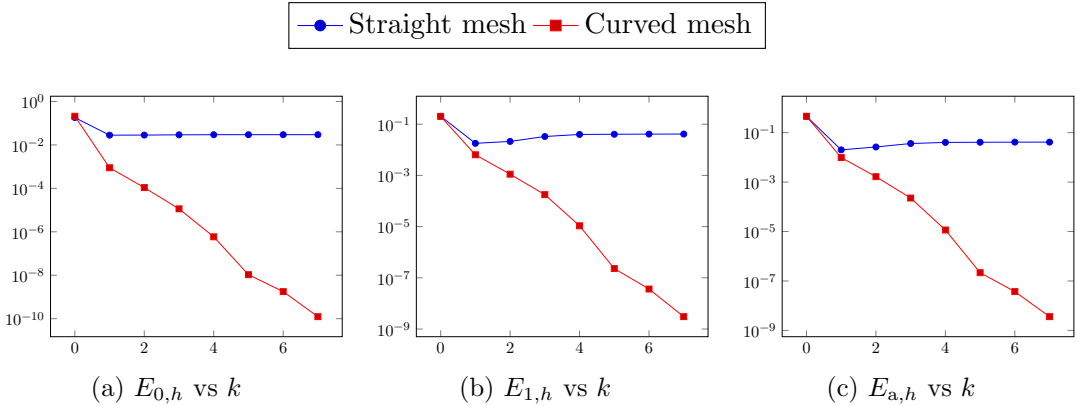
\begin{figure}[H]
	\centering
	\ref{graph.legend}
	\vspace{0.5cm}\\
	\subcaptionbox{$E_{0,h}$ vs $k$}
	{
		\begin{tikzpicture}[scale=0.56]
			\begin{semilogyaxis}
				\addplot table[x=EdgeDegree,y=L2Error] {data/straight_ktest_mesh2_2.dat};
				\addplot table[x=EdgeDegree,y=L2Error] {data/curved_ktest_mesh2_2.dat};
			\end{semilogyaxis}
		\end{tikzpicture}
	}
	\subcaptionbox{$E_{1,h}$ vs $k$}
	{
		\begin{tikzpicture}[scale=0.56]
			\begin{semilogyaxis}
				\addplot table[x=EdgeDegree,y=H1Error] {data/straight_ktest_mesh2_2.dat};
				\addplot table[x=EdgeDegree,y=H1Error] {data/curved_ktest_mesh2_2.dat};
			\end{semilogyaxis}
		\end{tikzpicture}
	}
	\subcaptionbox{$E_{\a,h}$ vs $k$}
	{
		\begin{tikzpicture}[scale=0.56]
			\begin{semilogyaxis}
				\addplot table[x=EdgeDegree,y=EnergyError] {data/straight_ktest_mesh2_2.dat};
				\addplot table[x=EdgeDegree,y=EnergyError] {data/curved_ktest_mesh2_2.dat};
			\end{semilogyaxis}
		\end{tikzpicture}
	}
	\caption{$k$-version curved boundary test on Mesh 2 (\(h\approx 0.3536\))}
	\label{fig:ktest.mesh2}
\end{figure}

\subsection{Heterogeneous diffusion}

We conclude the numerical section with a test of a diffusion problem with a piece-wise constant diffusion tensor. The HHO method requires the mesh to conform to any discontinuities in the diffusion. Thus, if the diffusion has a discontinuity along a curve, the mesh has to be curved to fit the discontinuity in the diffusion. Any polytopal mesh will require an approximation of the diffusion tensor. 

We consider $\Omega=\{(x,y):x^2+y^2<1\}$ to be the unit disc and $\matK$ a piece-wise constant diffusion tensor given by
\[
\matK = 
\begin{cases}
	\begin{pmatrix}
			1 & 1-\beta_1 \\ 1-\beta_1 & 1
		\end{pmatrix}
	\quad \textrm{if } r < R \\
	\medskip\\
	\begin{pmatrix}
			1 & 1-\beta_2 \\ 1-\beta_2 & 1
		\end{pmatrix}
	\quad \textrm{if } r > R 
\end{cases}.
\]
We take $R = 0.8$, $\beta_1=10^{-6}$ and $\beta_2=1$ which corresponds to anisotropic diffusion in the region $r < R$, and a Poisson problem in $r > R$. We take the source term to be $f \equiv 1$.

Again, we consider two sequences of meshes of the domain $\Omega$. We take both sequences to fit the domain $\Omega$ exactly, however, the curved mesh we take to fit the discontinuity in $\bmK$ exactly and the straight mesh takes a piece-wise linear approximation of $\bmK$. The mesh data is presented in Table \ref{table:mesh.curved.diffusion}. We note that both sequences of meshes have the same parameters.
\begin{table}[H]
	\centering
	\pgfplotstableread{data/diffusion_mesh_data.dat}\loadedtable
	\pgfplotstabletypeset
	[
	columns={MeshTitle,MeshSize,NbCells,NbInternalEdges}, 
%	rows={Mesh 1,Mesh 2,Mesh 3,Mesh 4,Mesh 5}, 
	columns/MeshTitle/.style={column name=Mesh \#},
	columns/MeshSize/.style={column name=\(h\),/pgf/number format/.cd,fixed,zerofill,precision=4},
	columns/NbCells/.style={column name=Nb. Elements},
	columns/NbInternalEdges/.style={column name=Nb. Internal Edges},
	every head row/.style={before row=\toprule,after row=\midrule},
	every last row/.style={after row=\bottomrule} 
	%		sci=false
	]\loadedtable
	\caption{Parameters of the mesh sequences used for the heterogeneous diffusion test}
	\label{table:mesh.curved.diffusion}
\end{table}
An example curved mesh and an example straight mesh is plotted in Figure \ref{fig:mesh.diffusion}.
\begin{figure}[H]
	\centering
	\includegraphics[width=0.35\textwidth]{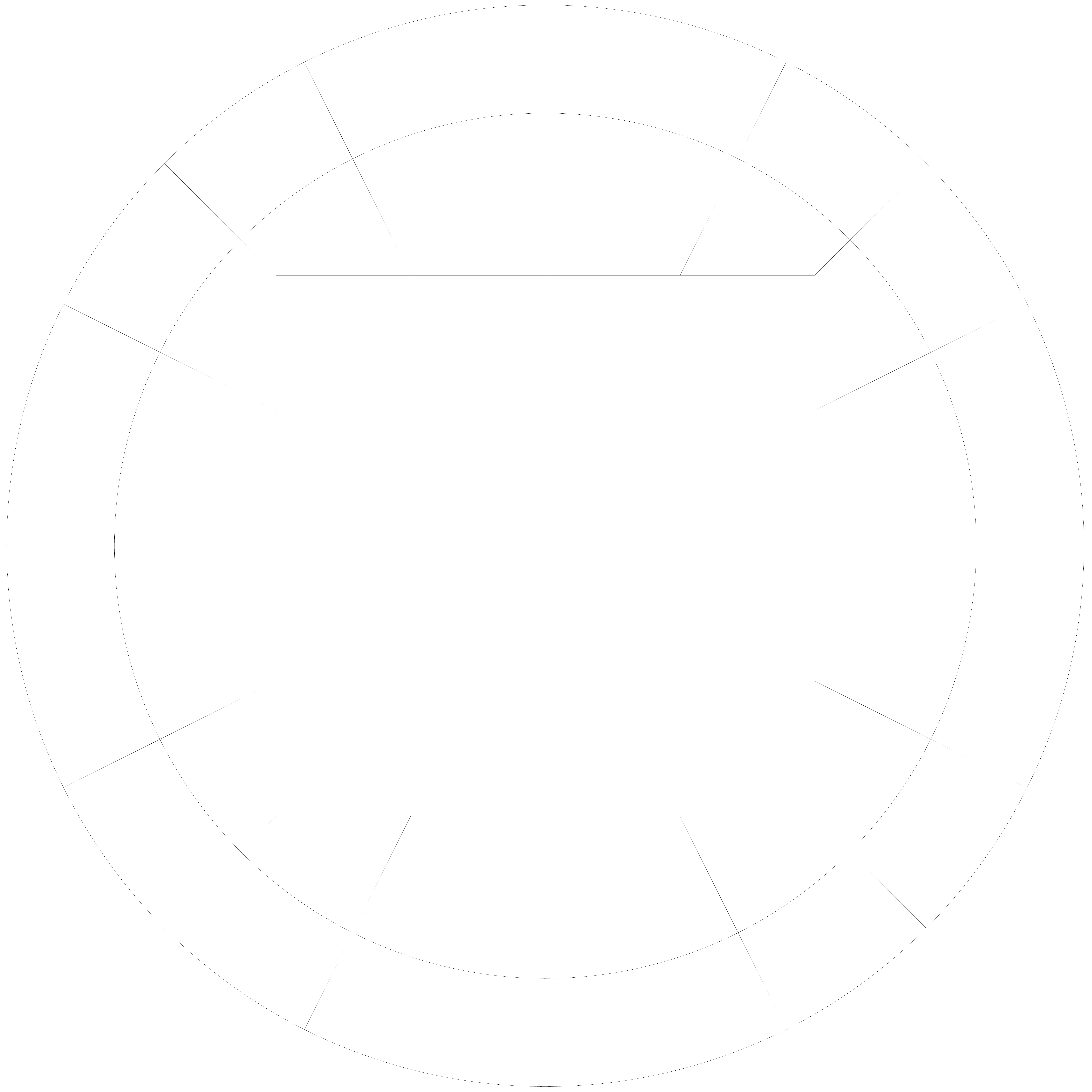} \hspace{1cm}
	\includegraphics[width=0.35\textwidth]{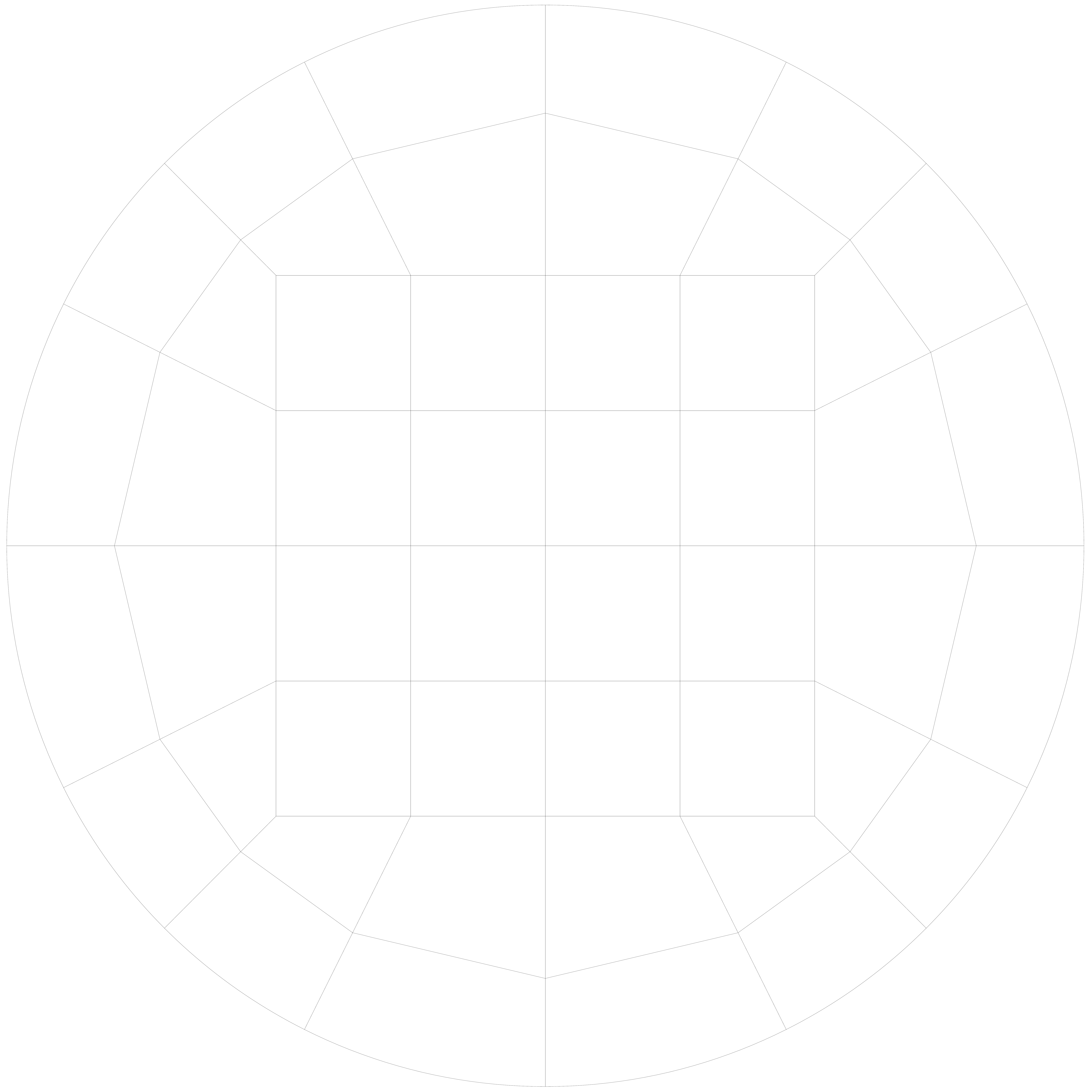} 
	\caption{Example meshes used for the heterogeneous diffusion test}
	\label{fig:mesh.diffusion}
\end{figure}

As we do not know the exact solution to this problem, we run the scheme on the finest curved mesh with $k=7$. We denote by the discrete solution to this problem $\pKh{k+1}\uluh = u_h^*$, which will play the role of the `exact' solution. We measure the quantities
\[
	\int_{\Omega} u_h^* \approx 0.46006947 \quad ; \quad \seminorm[\HONE(\Th)]{u_h^*} \approx 0.80699766.
\]
We would then like to test the performance of the scheme on coarser meshes (both curved and straight) with smaller $k$ by investigating the behaviour of
\[
	E_1 = \SEMINORM{\int_{\Omega} (\pKh{k+1}\uluh - u_h^*)} \quad\textrm{and}\quad E_2 = \SEMINORM{\seminorm[\HONE(\Th)]{\pKh{k+1}\uluh} - \seminorm[\HONE(\Th)]{u_h^*}}.
\]
We are less interested in the rate of convergence of these measures, but rather want to observe steady convergence, and investigate the difference between the two schemes. In Figure \ref{fig:ktest.diffusion.mesh1} we plot the quantities $E_1$ and $E_2$ against increasing polynomial degree $k$ where we fix the mesh to be Mesh 1. It is clear that the scheme on the straight mesh, where we consider an approximate diffusion tensor, stops converging for $k > 1$ whereas the scheme on the curved mesh converges smoothly. In Figure \ref{fig:htest.diffusion} we test convergence against decreasing mesh size $h$ for polynomial degrees $k$. While for $k=1$ the order of $E_1$ and $E_2$ are similar for both schemes (and at times smaller on the straight mesh), the convergence is much smoother on the curved mesh. For $k=3$, the values are significantly smaller for the curved mesh, and the behaviour for the straight mesh does not differ much from the $k=1$ case. This coincides with the previous observations that increasing $k$ past $1$ has little effect on the scheme when considering a piece-wise linear approximation of the discontinuity in the diffusion.

%While we do not have an exact solution, we can get an idea of how well an approximation $\tilde{\bmK}$ is of $\bmK$ by measuring the flux error
%\[
%	E_{\textrm{flux}} = \Brac{\sum_{F\in\Fhi} (h_{T_1} + h_{T_2}) \norm[F]{\matK_{T_1}\nabla \rmp_{\tilde{\matK}, T_1}^{k+1}\ul{u}_{T_1} \cdot \nor_{T_1 F} + \matK_{T_2}\nabla \rmp_{\tilde{\matK}, T_2}^{k+1}\ul{u}_{T_2} \cdot \nor_{T_2 F}}^2 }^\frac12.
%\]
%where we denote by $T_1$, $T_2$, the two elements attached to the interface $F$. While we expect $E_{\textrm{flux}}$ to converge to zero on the curved mesh, we are unsure of its behaviour on the straight mesh where the scheme is run with an approximate diffusion tensor $\tilde{\matK}$. If $\rmp_{\matK, T_1}^{k+1}\ul{u}_{T_1}$ is a good approximation of $u$, then the flux should be small.

\begin{figure}[H]
	\centering
	\ref{graph.legend}
	\vspace{0.5cm}\\
	\subcaptionbox{$E_1$ vs $k$}
	{
		\begin{tikzpicture}[scale=0.85]
			\begin{semilogyaxis}
				\addplot table[x=EdgeDegree,y=integral_error] {data/straight_ktest_source_1.dat};
				\addplot table[x=EdgeDegree,y=integral_error] {data/curved_ktest_source_1.dat};
			\end{semilogyaxis}
		\end{tikzpicture}
	}
	\hspace{0.1cm}
	\subcaptionbox{$E_2$ vs $k$}
	{
	\begin{tikzpicture}[scale=0.85]
		\begin{semilogyaxis}
			\addplot table[x=EdgeDegree,y=h1_norm_error] {data/straight_ktest_source_1.dat};
			\addplot table[x=EdgeDegree,y=h1_norm_error] {data/curved_ktest_source_1.dat};
		\end{semilogyaxis}
	\end{tikzpicture}
	}
	\caption{$k$-version heterogeneous diffusion test on Mesh 1 ($h\approx0.7654$)}
	\label{fig:ktest.diffusion.mesh1}
\end{figure}
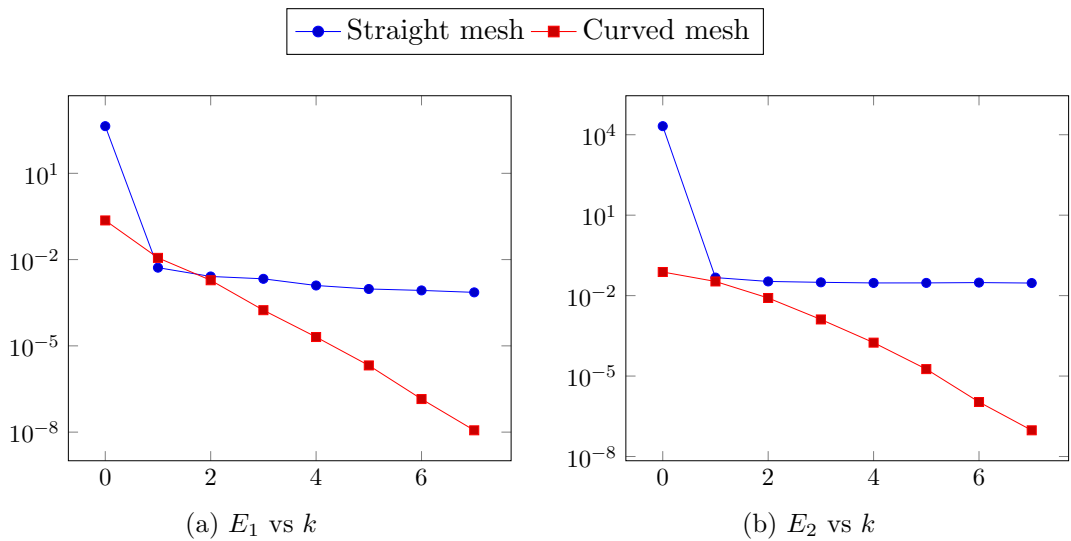

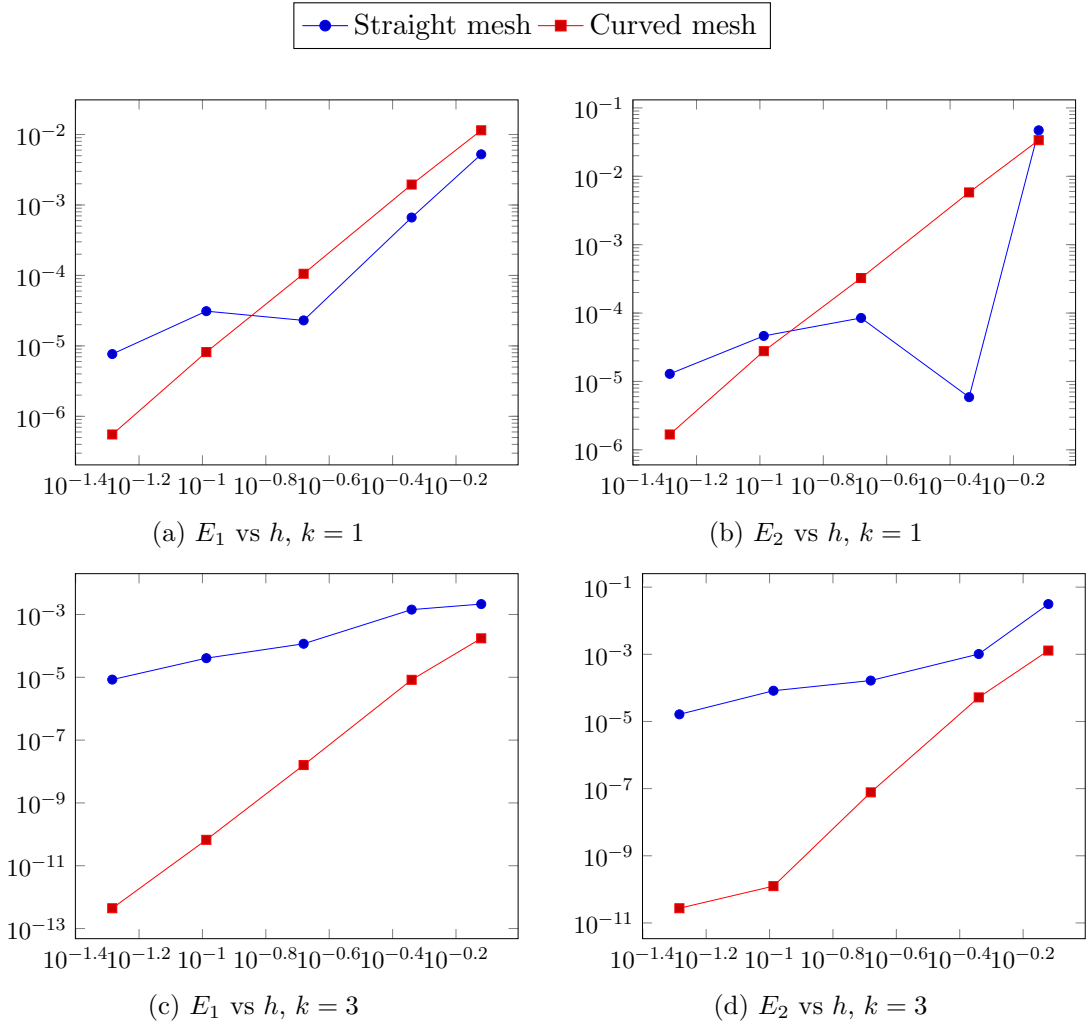
\begin{figure}[H]
\centering
\ref{graph.legend}
\vspace{0.5cm}\\
\subcaptionbox{$E_1$ vs $h$, $k=1$}
{
	\begin{tikzpicture}[scale=0.85]
		\begin{loglogaxis}
			\addplot table[x=MeshSize,y=integral_error] {data/straight_k1_source_1.dat};
			\addplot table[x=MeshSize,y=integral_error] {data/curved_k1_source_1.dat};
		\end{loglogaxis}
	\end{tikzpicture}
}
\hspace{0.1cm}
\subcaptionbox{\(E_2\) vs $h$, $k=1$}
{
	\begin{tikzpicture}[scale=0.85]
		\begin{loglogaxis}
			\addplot table[x=MeshSize,y=h1_norm_error] {data/straight_k1_source_1.dat};
			\addplot table[x=MeshSize,y=h1_norm_error] {data/curved_k1_source_1.dat};
		\end{loglogaxis}
	\end{tikzpicture}
}
\\
\medskip
%\caption{Error vs $k$, \(h\approx 0.3986\)}
%\label{fig:ktest.diffusion.mesh1}
%\end{figure}
%
%\begin{figure}[H]
%\centering
%\ref{graph.legend}
%\vspace{0.5cm}\\
\subcaptionbox{$E_1$ vs $h$, $k=3$}
{
	\begin{tikzpicture}[scale=0.85]
		\begin{loglogaxis}
			\addplot table[x=MeshSize,y=integral_error] {data/straight_k3_source_1.dat};
			\addplot table[x=MeshSize,y=integral_error] {data/curved_k3_source_1.dat};
		\end{loglogaxis}
	\end{tikzpicture}
}
\hspace{0.1cm}
\subcaptionbox{$E_2$ vs $h$, $k=3$}
{
	\begin{tikzpicture}[scale=0.85]
		\begin{loglogaxis}
			\addplot table[x=MeshSize,y=h1_norm_error] {data/straight_k3_source_1.dat};
			\addplot table[x=MeshSize,y=h1_norm_error] {data/curved_k3_source_1.dat};
		\end{loglogaxis}
	\end{tikzpicture}
}
\caption{$h$-version heterogeneous diffusion test}
\label{fig:htest.diffusion}
\end{figure}

Finally, in Figure \ref{fig:sol.plots.mesh1.k7} we show contour plots of the potential reconstructions of the discrete solutions on Mesh 1 with $k=7$. We observe that the plot on the straight mesh seems to be distorted along the eigen vectors of $\matK$ (that is, $(1,1)^t$ and $(1,-1)^t$) when compared to the plot on the curved mesh. We also plot the absolute value of the difference between the two schemes and observe that this value seems to be of greatest magnitude around the discontinuity in the diffusion tensor.

\begin{figure}[H]
\centering
\includegraphics[width=0.6\textwidth]{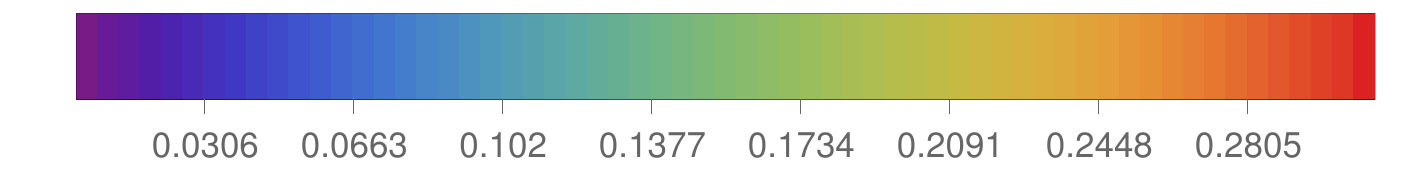}\\
\subcaptionbox{Curved mesh}
{
	\includegraphics[width=0.45\textwidth]{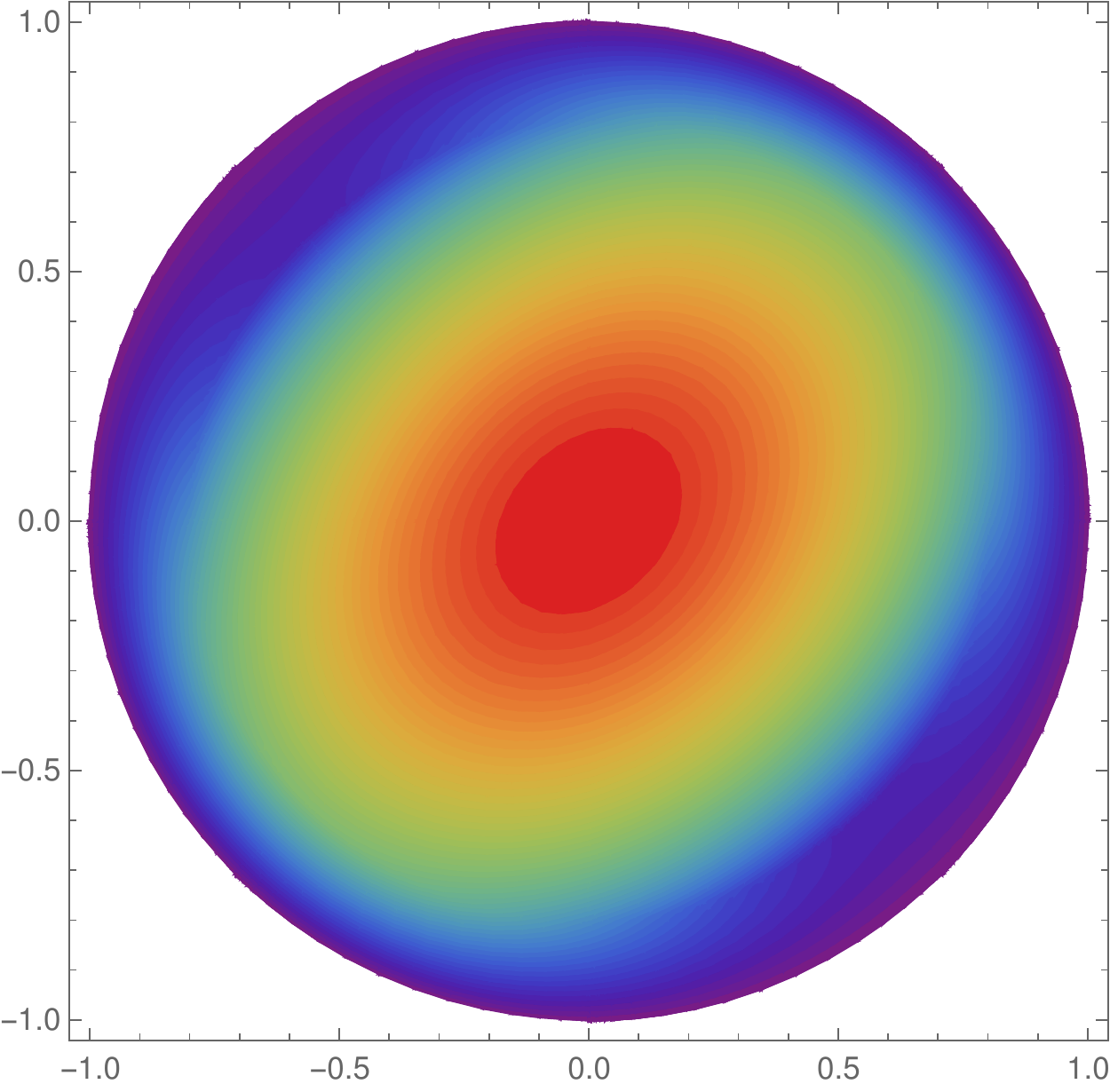}
}
\subcaptionbox{Straight mesh}
{
	\includegraphics[width=0.45\textwidth]{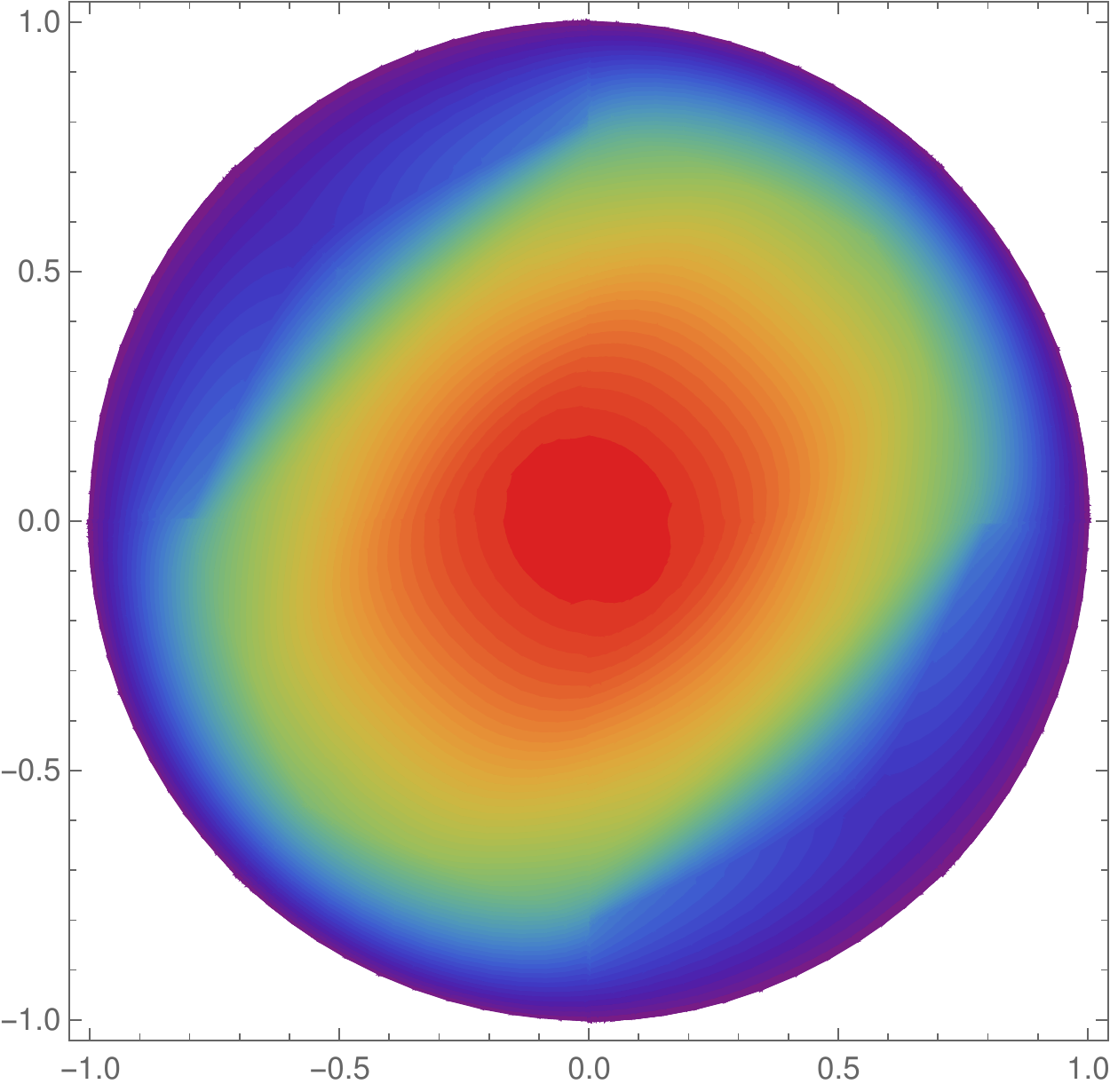}
}
\\
\medskip
\includegraphics[width=0.6\textwidth]{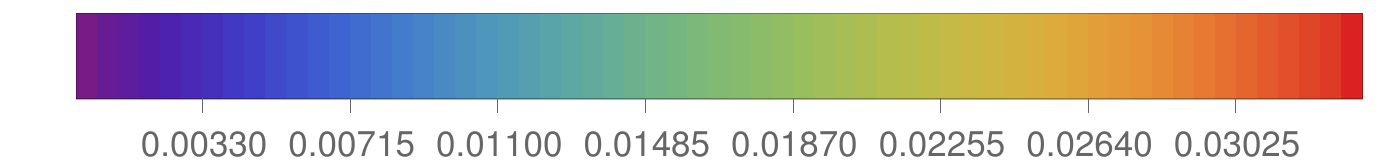}\\
\subcaptionbox{Difference}
{
	\includegraphics[width=0.45\textwidth]{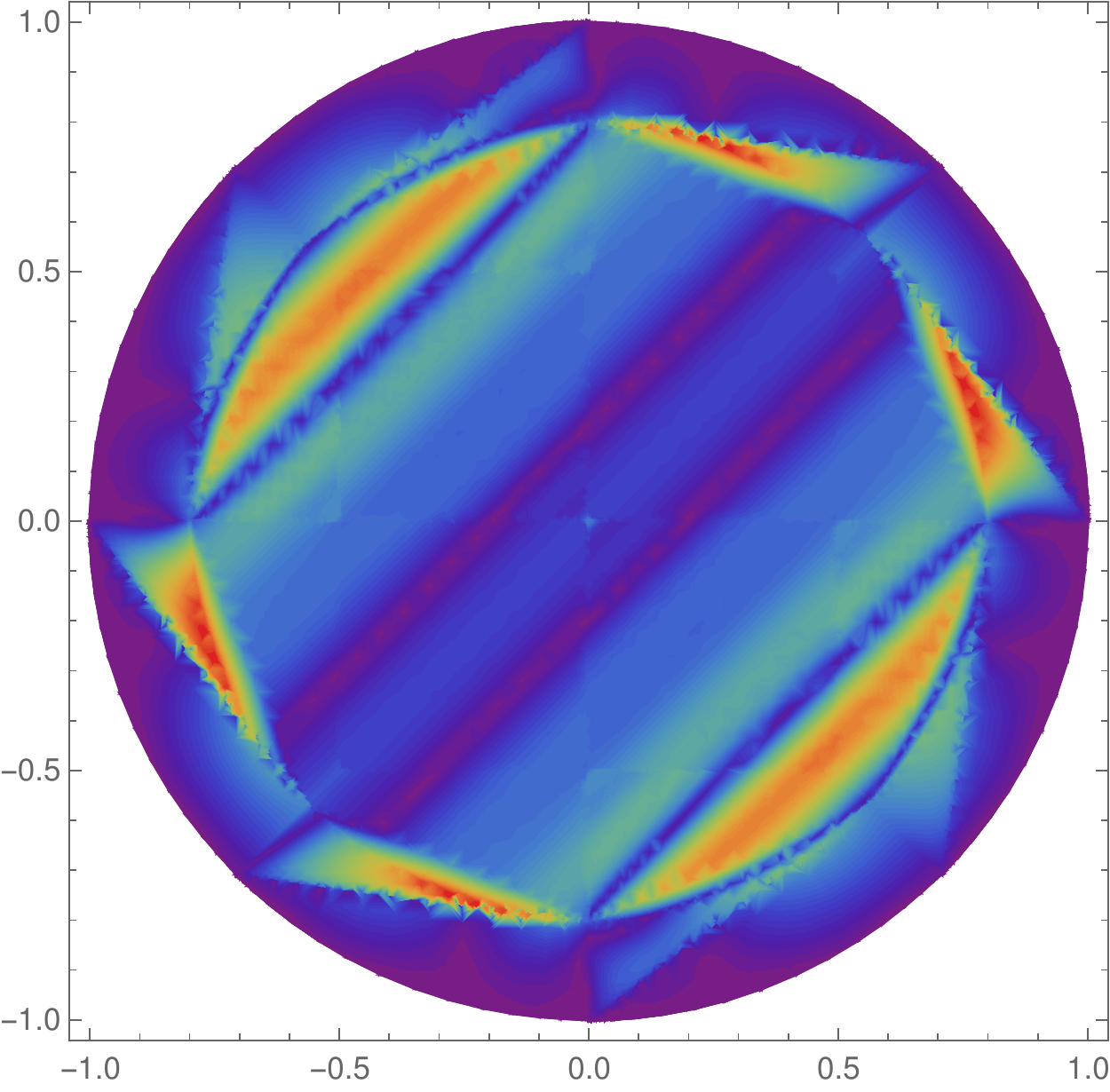}
}
\caption{Contour plots of the heterogeneous diffusion test on Mesh 1 with $k=7$}
\label{fig:sol.plots.mesh1.k7}
\end{figure}

\section*{Declarations}

The author declares that they have no conflict of interest.

\printbibliography
%\bibliographystyle{plain}
%\bibliography{references}

\end{document}